\title{End spaces and tree-decompositions}
\author{Marcel Koloschin}
\author{Thilo Krill}
\author{Max Pitz}
\address{Universit\"at Hamburg, Department of Mathematics, Bundesstrasse 55 (Geomatikum), 20146 Hamburg, Germany}
\email{\{marcel.koloschin, thilo.krill, max.pitz\}@uni-hamburg.de}
\let\polishlcross=\l
\def\l{\ifmmode\ell\else\polishlcross\fi}
\let\emptyset=\varnothing
\let\eps=\varepsilon
\let\theta=\vartheta
\let\rho=\varrho
\let\phi=\varphi
\def\NN{\mathbb N}
\def\cC{{\mathscr C}}
\def\cP{{\mathcal P}}
\def\cE{{\mathcal E}}
\def\cR{{\mathcal R}}
\def\cT{{\mathcal T}}
\def\cV{{\mathcal V}}
\def\cO{{\mathcal O}}
\newcommand{\script}{\mathcal}
\newcommand{\parentheses}[1]{{\left( {#1} \right)}}
\newcommand{\p}{\parentheses}
\newcommand{\Set}[1]{{\left\lbrace {#1} \right\rbrace}}
\def\set#1:#2{\Set{{#1} \colon {#2}}}
\theoremstyle{plain}
\newtheorem{thm}{Theorem}[section]
\newtheorem{clm}[thm]{Claim}
\newtheorem{cor}[thm]{Corollary}
\newtheorem{lemma}[thm]{Lemma}
\newtheorem{problem}[thm]{Problem}
\theoremstyle{definition}
\newtheorem{exmp}[thm]{Example}
\begin{document}

\begin{abstract}
We present a systematic investigation into how tree-decompositions of finite adhesion capture topological properties of the space formed by a graph together with its ends.
As main results, we characterise when the ends of a graph can be distinguished, and characterise which subsets of ends can be displayed by a tree-decomposition of finite adhesion.
   
In particular, we show that a subset  $\Psi$ of the ends of a graph $G$ can be displayed by a tree-decomposition of finite adhesion if and only if $\Psi$ is $G_\delta$ (a countable intersection of open sets) in $|G|$, the  topological space formed by a graph together with its ends. Since the undominated ends of a graph are easily seen to be $G_\delta$, this provides a structural explanation for Carmesin's result that the set of undominated ends can always be displayed.
\end{abstract}

\vspace*{-36pt}
\maketitle

\section{Introduction}

In this paper we settle the question up to which complexity the topological spaces $|G|$ formed by an infinite graph $G$ together with its ends can still be encoded by tree-decompositions of finite adhesion of the underlying graph $G$. 

To state our results more precisely, recall that a 
\emph{separation} of a graph $G$ is an unordered pair $\{A,B\}$ of sets of vertices in $G$ such that $A \cup B = V(G)$ and $G$ has no edge between $A \setminus B$ and $B \setminus A$, which is equivalent to saying that its \emph{separator} $A \cap B$ separates $A$ from $B$. The cardinal $|A \cap B|$ is the \emph{order} of the separation $\{A, B\}$ and the sets $A, B$ are its \emph{sides}.

    \vspace{-2pt}
\begin{figure}[h]
    \centering
    \begin{tikzpicture}[xscale=0.9]
    \tikzstyle{white node}=[draw,circle,fill=black,minimum size=4pt,inner sep=0pt]
    \tikzstyle{dot}=[draw,circle,fill=black,minimum size=0pt,inner sep=0pt]
    \tikzstyle{red node}=[draw,circle,color=red!50,fill=red!50,minimum size=7pt,inner sep=0pt]
    \tikzstyle{red circle}=[draw,circle,color=red,minimum size=75pt,inner sep=0pt]
    \tikzstyle{red edge} = [draw,line width=2pt,-,red!50]
    \tikzstyle{red dot}=[draw,circle,color=red,fill=red,minimum size=2pt,inner sep=0pt]
                                    
        \draw (0,0) node[white node] {}
        -- ++(-40:2.2cm) node[white node] (1) {}
        -- ++(230:1.6cm) node[white node] (11) {};
                \draw (1)
        -- ++(-10:1.5cm) node[white node] (2)  {}
        -- ++(0:1.8cm) node[white node] (3)  {};
            \draw (2)
        -- ++(80:1.5cm) node[dot] (21) {}
        -- ++(80:0.6cm) node[white node] (22) {};
            \draw (22)
        -- ++(40:1.3cm) node[white node] (221) {};
            \draw (22)
        -- ++(130:1.6cm) node[white node] (222)  {};
     \draw (7,-1.61) node[white node] (4) {}
        -- ++(17:1.8cm) node[white node] (5) {}
        -- ++(54:2cm) node[white node] (6) {};
    \draw (4)
        -- ++(-60:1.35cm) node[white node] (7) {};
    \draw (5)
        -- ++(-50:1.4cm) node[white node] (8) {};
        
    \draw[loosely dashed] (3) -- (4);
    \draw[rotate=-40] (0,0cm) ellipse (40pt and 20pt);
    \draw[rotate=-10] (1) ellipse (32pt and 22pt);
    \draw[rotate=230] (11) ellipse (27pt and 13pt); 
    \draw[rotate=0] (2) ellipse (31pt and 20pt);
    \draw[rotate=80] (21) ellipse (35pt and 15pt);
    \draw[rotate=40] (221) ellipse (30pt and 16pt);
    \draw[rotate=130] (222) ellipse (38pt and 16pt);
    \begin{scope}

    \draw[clip,rotate=5] (3) ellipse (34pt and 20pt);
    \draw[rotate=5,fill=gray, opacity=0.3] (4) ellipse (38pt and 25pt);
    \end{scope}
    \draw[rotate=5] (4) ellipse (38pt and 25pt);
    \draw[rotate=0] (5) ellipse (25pt and 25pt);
    \draw[rotate=50] (6) ellipse (40pt and 25pt);
    \draw[rotate=-40] (7) ellipse (25pt and 22pt);
    \draw[rotate=-50] (8) ellipse (28pt and 16pt);
    
    \node at (5.8,-2.7){$V_{t_1} \cap V_{t_2}$};
    \node at (5,-1.4){$t_1$};
    \node at (7.1,-1.3){$t_2$};
     \node at (2.5,-0.5){$U_1$};
    \node at (10,-1){$U_2$};

\end{tikzpicture}
    \vspace{-12pt}
    \caption{$V_{t_1} \cap V_{t_2}$ separates $U_1$ from $U_2$.}
    \label{fig:td}
\end{figure}
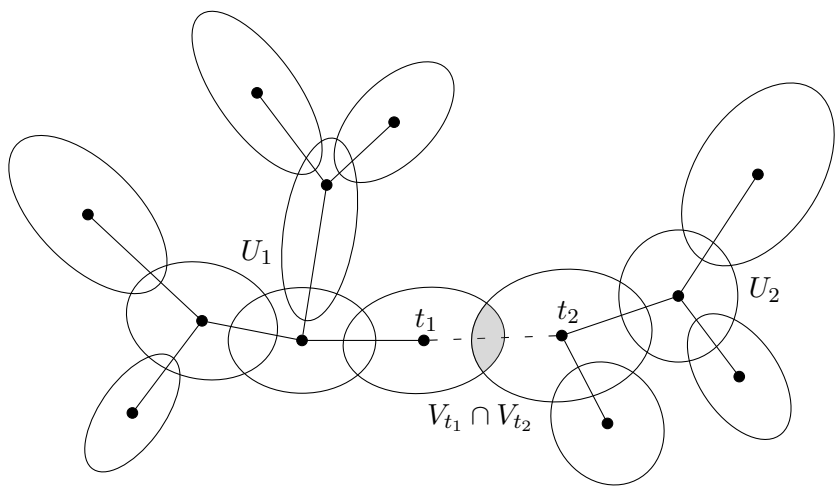

An \emph{end} of a connected (infinite) graph $G$ is an equivalence class of rays, where two rays are  equivalent if for every finite order separation of the graph $G$, the rays eventually belong both to the same side of the separation. The set of ends is denoted by $\Omega(G)$. The space $|G|$ is the topological space on $G \cup \Omega(G)$ equipped with a natural topology called \textsc{MTop}, described in detail in Section~\ref{sec_prelims}. For locally  finite connected graphs, this $|G|$ is precisely the Freudenthal compactification of $G$. A longstanding quest in graph theory is to understand end spaces of  infinite graphs that are not necessarily locally finite, cf.~\cites{diestel1992end, diestel2006end,diestel2011locally, kurkofka2021approximating,kurkofkapitz_rep,FirstSecondCountable,polat1996ends2, polat1996ends}.

A \emph{tree-decomposition} of a graph $G$ is a set $\mathcal{T}=\{V_t \colon t \in T\}$ of $G$ into subsets of $V(G)$ called \emph{parts} indexed by some tree $T$ so that $G = \bigcup_{t \in T} G[V_t]$ and the parts mirror the separation properties of the tree: just like removing any edge $e =t_1t_2$ from $T$ gives rise to two components $T_1$ and $T_2$ of $T-e$, so does removing $X_e:=V_{t_1} \cap V_{t_2}$ from $G$ separate any part of $T_1$ from any part of $T_2$, see Figure~\ref{fig:td}. More formally, writing $U_1=\bigcup \{ V_t \colon t \in T_1 \}$ and $U_2=\bigcup \{ V_t \colon t \in  T_2 \}$, we require that $\{U_1,U_2\}$ is a separation of $G$ with separator $X_e$. If all such separations are of finite order, we say the tree-decomposition has \emph{finite adhesion}.

Now consider how the ends of a graph $G$ interact with a tree-decomposition $\mathcal{T}$ of finite adhesion. As every edge $e$ of $T$ induces a finite order separation $\{A_e,B_e\}$ of $G$, any end of $G$ has to choose one side of $T-e$, and we may visualize this decision by orienting $e$ accordingly. Then for a fixed end, all the edges point either towards a unique node or towards a unique end of $T$, see Figure~\ref{fig:orient}. 
In this way, each end of $G$ \emph{lives} in a part of $\mathcal{T}$ or \emph{corresponds} to an end of $T$, and we may encode this correspondence  by a map $f_\mathcal{T}  \colon \Omega(G) \to V(T) \cup \Omega(T)$.

 \vspace{-12pt}
\begin{figure}[h]
    \centering
    \begin{tikzpicture}[xscale=0.9]
    \tikzstyle{white node}=[draw,circle,fill=black,minimum size=4pt,inner sep=0pt]
    \tikzstyle{dot}=[draw,circle,fill=black,minimum size=0pt,inner sep=0pt]
    \tikzstyle{red node}=[draw,circle,color=red!50,fill=red!50,minimum size=7pt,inner sep=0pt]
    \tikzstyle{red circle}=[draw,circle,color=red,minimum size=75pt,inner sep=0pt]
    \tikzstyle{red edge} = [draw,line width=2pt,-,red!50]
    \tikzstyle{red dot}=[draw,circle,color=red,fill=red,minimum size=2pt,inner sep=0pt]
                                    
        \draw (0,0) node[white node] (0){}
        -- ++(-40:2.2cm) node[white node] (1) {}
        -- ++(230:1.6cm) node[white node] (11) {};
        
                \draw (1)
        -- ++(-10:1.5cm) node[white node] (2)  {}
        -- ++(0:1.8cm) node[white node] (3)  {};

            \draw (2)
        -- ++(80:1.5cm) node[dot] (21) {}
        -- ++(80:0.6cm) node[white node] (22) {};
    
            \draw (22)
        -- ++(40:1.3cm) node[white node] (221) {};
        
            \draw (22)
        -- ++(130:1.6cm) node[white node] (222)  {};
        
     \draw (7,-1.61) node[white node] (4) {}
        -- ++(17:1.8cm) node[white node] (5) {}
        -- ++(54:2cm) node[white node] (6) {};
          
    \draw (4)
        -- ++(-60:1.35cm) node[white node] (7) {};
        
    \draw (5)
        -- ++(-50:1.4cm) node[white node] (8) {};
        
    \draw[->] (3) -- (4);
          \draw[->] (0) -- (1); 
          \draw[->] (11) -- (1);
        \draw[->] (1) -- (2);  
        \draw[->] (2) -- (3);
      \draw[->] (21) -- (2);
            \draw[->] (221) -- (22);
            \draw[->] (222) -- (22);
            \draw[->] (7) -- (4);
             \draw[->] (6) -- (5);
            \draw[->] (4) -- (5);
            \draw[->] (5) -- (8);
    \draw[rotate=-40] (0,0cm) ellipse (40pt and 20pt);
    \draw[rotate=-10] (1) ellipse (32pt and 22pt);
    \draw[rotate=230] (11) ellipse (27pt and 13pt); 
    \draw[rotate=0] (2) ellipse (31pt and 20pt);
     \draw[rotate=80] (21) ellipse (35pt and 15pt);
    \draw[rotate=40] (221) ellipse (30pt and 16pt);
    \draw[rotate=130] (222) ellipse (38pt and 16pt);
    
    \draw[rotate=5] (3) ellipse (34pt and 20pt);
   
    \draw[rotate=5] (4) ellipse (38pt and 25pt);
    \draw[rotate=0] (5) ellipse (25pt and 25pt);
    \draw[rotate=50] (6) ellipse (40pt and 25pt);
    \draw[rotate=-40] (7) ellipse (25pt and 22pt);
    \draw[rotate=-50] (8) ellipse (28pt and 16pt);

    \draw[purple, rounded corners, thick, ->] (4.7,-1.4)  -- ++(180:1.3cm)  -- ++(80:1.5cm) -- ++(170:0.4cm) -- ++(260:1.6cm) -- ++(170:2cm) -- ++(260:.4cm) -- ++(-10:2.3cm) -- ++(0:3.7cm)  -- ++(-60:1.5cm)  -- ++(30:0.5cm) -- ++(120:1.5cm) -- ++(15:1.3cm) -- ++(-50:1.8cm);
    \node at (10,-3.2){$\textcolor{purple}{R}$};
\end{tikzpicture}
    \vspace{-12pt}
    \caption{A ray $R$ and its corresponding orientation of $T$}
    \label{fig:orient}
\end{figure}
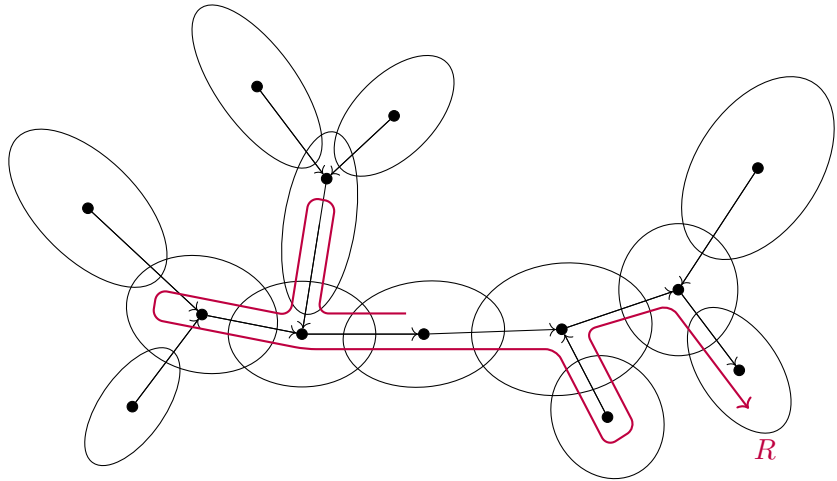

Tree-decompositions of finite adhesion have been used to study the structure of infinite graphs and their ends in e.g.\ \cites{bowler2013infinite, StarComb1StarsAndCombs,burger2022duality,carmesin2014all, carmesin2017topological,diestel1992end,pitz_shortCarmesin,thomas1989well}. Of course, some tree-decompositions of finite adhesion carry more information about the ends than others. For one, information content may be measured in terms of injectivity of $f_\mathcal{T}$. Indeed, a tree-decomposition consisting of a single part contains zero information, whereas a tree-decomposition $\mathcal{T}$ of finite adhesion that \emph{distinguishes} all the ends, i.e.\ where $f_\mathcal{T}$ is injective, contains more information about the end space -- although it may still give false hints, as for example ends of $T$ may not represent real ends of $G$. So even better would be a bijective $f_\mathcal{T}$, in which case we say that $\mathcal{T}$ \emph{represents} the ends of  $G$. 
On the other hand, while the trivial tree-decomposition into a single part always exist, some graphs $G$, such as the binary tree with one dominating vertex added to every rooted ray (cf.~Section~\ref{sec_applications}), are too complex to be distinguished or represented by a tree-decomposition of finite adhesion. Our first main result characterises precisely when these best-case scenarios occur; as a surprising by-product, we obtain that whenever a space $|G|$ can be distinguished by a tree-decomposition of finite adhesion, then it can also be represented. In fact, an even weaker condition suffices: As long as there is some tree-decomposition of finite adhesion into   
\emph{${\leq} 1$-ended parts}, i.e.\ a tree-decomposition such that at most one end is mapped to any given part under $f_\script{T}$, we also get a tree-decomposition representing $|G|$. 

Let's call a set of vertices $U \subseteq V(G)$ \emph{slender} if its closure $\overline{U} \subseteq |G|$ is scattered of finite Cantor-Bendixson rank; in other words, if successively taking the Cantor-Bendixson derivative of its closure $\overline{U} \subseteq |G|$ yields the empty set after finitely many iterations, cf.~Section~\ref{sec_topnotions}.

With this notion, our first main result reads as follows.

\begin{thm}
\label{thm_main}
The following are equivalent for any connected graph $G$ with at least one end:
\begin{enumerate}
    \item\label{main_item1} There is a tree-decomposition of finite adhesion that represents $\Omega(G)$.
    \item\label{main_item2} There is a tree-decomposition of finite adhesion that distinguishes $\Omega(G)$.
    \item\label{main_item3} There is a tree-decomposition of finite adhesion into ${\leq} 1$-ended parts. 
    \item\label{main_item4} $V(G)$ is a countable union of slender sets.
\end{enumerate}
\end{thm}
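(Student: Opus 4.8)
The plan is to prove the cyclic chain $(1)\Rightarrow(2)\Rightarrow(3)\Rightarrow(4)\Rightarrow(1)$. The first two implications are immediate from the definitions: a bijective $f_{\mathcal T}$ is in particular injective, giving $(1)\Rightarrow(2)$; and if $f_{\mathcal T}$ is injective then at most one end is mapped to each node of $T$, so every part is ${\le}1$-ended, giving $(2)\Rightarrow(3)$. The content therefore lies in the two remaining implications.

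For $(3)\Rightarrow(4)$ I would fix a root of the decomposition tree $T$, write $\hat V_t=V_t\setminus V_{t^-}$ for the vertices first appearing at $t$ (with $\hat V_r=V_r$ at the root), and set $U_n=\bigcup\{\hat V_t: \operatorname{lev}(t)=n\}$; this is a countable partition of $V(G)$, so it suffices to show each $U_n$ is slender. The key observation is a separation fact: for every edge $e$ of $T$ below level $n$, all of $U_n$ lies on the root side $A_e$, since the deeper endpoint of $e$ sits at level $>n$ and hence cuts off no level-$n$ node. Consequently, if an end $\omega$ lies in $\overline{U_n}$ then $\omega$ cannot choose the far side $B_e$ of any such deep edge, as the basic neighbourhood $\hat C(X_e,\omega)$ would then miss $U_n$ entirely; thus $f_{\mathcal T}(\omega)$ is a node of level $\le n$, and by ${\le}1$-endedness the map $\omega\mapsto f_{\mathcal T}(\omega)$ is injective on $\overline{U_n}\cap\Omega(G)$. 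Since vertices are isolated in the subspace $V(G)\cup\Omega(G)$, one Cantor--Bendixson step reduces $\overline{U_n}$ to this set of ends, where the edge directly above the part hosting $\omega$ carves out a neighbourhood $\hat C(X_e,\omega)$ containing no other end of level $\le\operatorname{lev}(f_{\mathcal T}(\omega))$. An induction on the level then yields that the $k$-th derivative of $\overline{U_n}\cap\Omega(G)$ contains only ends living at level $\le n-k$, so this set has finite rank; hence $\overline{U_n}$ is scattered of finite rank and $U_n$ is slender.

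The substantial direction is $(4)\Rightarrow(1)$, and this is where I expect the main difficulty. Starting from a cover $V(G)=\bigcup_n U_n$ by slender sets, I would build the representing tree-decomposition as a limit of successively finer nested families of finite-order separations, in the spirit of the construction of a normal spanning tree from a countable union of dispersed sets (Jung's theorem), but with the finite Cantor--Bendixson rank playing the decisive new role. For a single slender $U_n$, the finite rank of $\overline{U_n}$ yields a finite sequence of finite separators resolving its ends layer by layer along the Cantor--Bendixson derivative; the crucial point is that finite rank is exactly what keeps each such separator of finite order, so that the resulting adhesion stays finite, whereas mere dispersedness would give only a normal tree and no control on adhesion. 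Interleaving these finitely-many-layer resolutions over all $n$ by a diagonal bookkeeping should produce a nested, tree-indexed family of finite-order separations separating every pair of ends, i.e.\ a tree-decomposition of finite adhesion distinguishing $\Omega(G)$.

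The main obstacle is precisely to control the interaction between different $U_n$: a separator resolving $U_n$ must be refined against all constraints already imposed by $U_0,\dots,U_{n-1}$ while remaining of finite order and keeping the whole family nested, and this is exactly where the finite rank has to be exploited quantitatively rather than qualitatively. Finally, to pass from a distinguishing to a representing decomposition, I would prune the resulting tree, suppressing nodes whose parts host no end and contracting those ends of $T$ that correspond to no end of $G$; verifying that this keeps the adhesion finite and makes $f_{\mathcal T}$ bijective yields $(1)$ and, as a by-product, the announced equivalence of distinguishing and representing.
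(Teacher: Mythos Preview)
Your treatment of $(1)\Rightarrow(2)\Rightarrow(3)$ is fine, and your argument for $(3)\Rightarrow(4)$ is correct and essentially the same as the paper's (the paper uses the increasing unions $U_n=\bigcup_{t\in T^{\le n}}V_t$ rather than your level-by-level partition, but the analysis of $\partial U_n$ via levels of $f_{\mathcal T}$ is identical). Your final pruning step from distinguishing to representing is also the right idea and matches the paper's Lemma~\ref{lem_represent}, though ``contracting those ends of $T$'' should read ``contracting a maximal disjoint family of rays in $T$ whose parts host no end''.

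The genuine gap is in $(4)\Rightarrow(1)$. What you have written there is not a proof but a description of what a proof would have to achieve, together with an honest admission that you do not know how to achieve it. Two specific problems: first, the sentence ``finite rank is exactly what keeps each such separator of finite order'' is not right --- finiteness of separators comes from the definition of ends and has nothing to do with Cantor--Bendixson rank; what rank controls is how many \emph{rounds} of separation are needed, not the order of each separation. Second, and more seriously, the analogy with Jung's construction of a normal spanning tree does not carry over: Jung builds a tree greedily vertex by vertex, but here you need a \emph{nested} family of finite-order separations, and there is no evident way to produce nestedness from the Cantor--Bendixson filtration by ``diagonal bookkeeping'' alone.

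The paper's route is genuinely different from what you sketch. It first converts the slender cover into a $\sigma$-discrete expansion $V(G)\cup\Xi=\bigsqcup_n X'_n$ (this step does use the finite rank, and corresponds to your layer idea). The substantial construction then proceeds via two tools you do not mention: \emph{envelopes} for sets of vertices \emph{and ends} (a set of vertices of finite adhesion with prescribed end-boundary), and \emph{rayless normal trees} refining a given open cover of the end space. Alternating these two devices yields an increasing chain $G_0\subseteq G_1\subseteq\cdots$ of subgraphs of finite adhesion whose components give an upwards connected tree-decomposition with the right boundary and with the ends of $\Xi$ landing in distinct parts. Nestedness is automatic from the chain structure; the hard work is showing that envelopes with the required boundary exist (Theorem~\ref{thm:envelope}) and that rayless normal trees can separate any discrete set of ends (Theorem~\ref{thm:RNT}). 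Neither ingredient is visible in your sketch, and without them the plan does not go through.
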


It is clear that any assertion from (\ref{main_item1}) to (\ref{main_item3})  implies the next. The idea for $(\ref{main_item3}) \Rightarrow (\ref{main_item4})$ is that for any fixed integer $n$, the union over all parts within distance $n$ from the root is a slender set of vertices, and $V(G)$ clearly is a countable union of these sets. Thus, the main contribution behind Theorem~\ref{thm_main} is the implication $(\ref{main_item4}) \Rightarrow (\ref{main_item1})$, which employs recently developed techniques of \emph{envelopes} from \cites{kurkofkapitz_rep,pitz_shortCarmesin} and \emph{rayless normal trees} from \cite{kurkofka2021approximating}. The proof of Theorem~\ref{thm_main} is given in Section~\ref{sec_8}.

A slightly different way to measure information captured by some tree-decomposition of finite adhesion is motivated by the  observation that end spaces of trees are well-understood: They are precisely the completely ultra-metrizable spaces. This suggests preferring  tree-decompositions $\mathcal{T}$ where $f_\mathcal{T}$ sends as many ends to $\Omega(T)$ as possible. In this case, there is hope to understand the subset $\Psi=f^{-1}_\mathcal{T}[\Omega(T)] \subseteq \Omega(G)$ called the \emph{boundary} of the tree-decomposition, with the best case being that $\mathcal{T}$ \emph{\textnormal{[}homeomorphically\textnormal{]} displays} its boundary, meaning that $f_\mathcal{T}$ restricts to a bijection [homeomorphism] between $\Psi$ and $\Omega(T)$, cf.~Figures~\ref{fig:my_labelBLA} and ~\ref{fig:my_labelBLA2}.

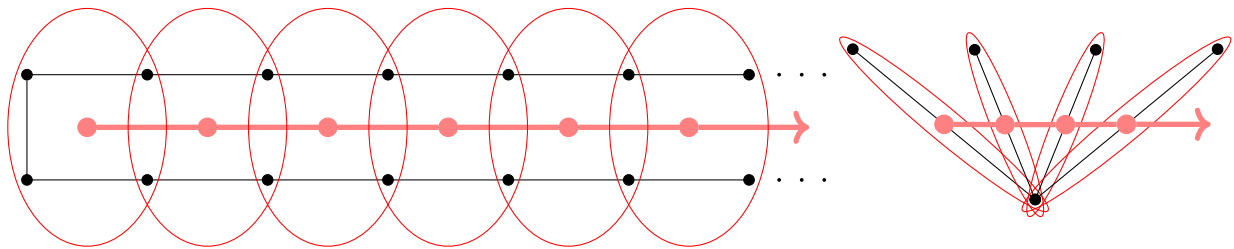
\begin{figure}[h]
    \centering
   \begin{tikzpicture}
    \tikzstyle{white node}=[draw,circle,fill=black,minimum size=4pt,inner sep=0pt]
    \tikzstyle{dot}=[draw,circle,fill=black,minimum size=1pt,inner sep=0pt]
    \tikzstyle{red node}=[draw,circle,color=red!50,fill=red!50,minimum size=7pt,inner sep=0pt]
    \tikzstyle{red circle}=[draw,circle,color=red,minimum size=75pt,inner sep=0pt]
    \tikzstyle{red edge} = [draw,line width=2pt,-,red!50]
    \tikzstyle{red dot}=[draw,circle,color=red,fill=red,minimum size=2pt,inner sep=0pt]
                                    
        \draw (0,-.1) node[white node] (v) {}
        -- ++(0:1.6cm) node[white node] () {}
        -- ++(0:1.6cm) node[white node] () {}
        -- ++(0:1.6cm) node[white node] () {}
        -- ++(0:1.6cm) node[white node] () {}
        -- ++(0:1.6cm) node[white node] () {}
        -- ++(0:1.6cm) node[white node] () {};
        
        \draw (v)
        -- ++(-90:1.4cm) node[white node] ()  {}
        -- ++(0:1.6cm) node[white node] () {}
        -- ++(0:1.6cm) node[white node] () {}
        -- ++(0:1.6cm) node[white node] () {}
        -- ++(0:1.6cm) node[white node] () {}
        -- ++(0:1.6cm) node[white node] () {}
        -- ++(0:1.6cm) node[white node] () {};
        
         \draw (0.8,-0.8) node[red node] (1) {}
        -- ++(0:1.6cm) node[red node] (2) {}
        -- ++(0:1.6cm) node[red node] (3) {}
        -- ++(0:1.6cm) node[red node] (4) {}
        -- ++(0:1.6cm) node[red node] (5) {}
        -- ++(0:1.6cm) node[red node] (6) {};

                \path[red edge] (1) -- (2);
                \path[red edge] (2) -- (3);
                \path[red edge] (3) -- (4);
                \path[red edge] (4) -- (5);
                \path[red edge] (5) -- (6);
                \draw[->,line width=2pt,red!50] (6) -- ++(0:1.6cm);

        \draw[color=red] (0.8,-0.8) ellipse (30pt and 45pt);
        \draw[color=red] (2.4,-0.8) ellipse (30pt and 45pt);
        \draw[color=red] (4,-0.8) ellipse (30pt and 45pt);
        \draw[color=red] (5.6,-0.8) ellipse (30pt and 45pt);
        \draw[color=red] (7.2,-0.8) ellipse (30pt and 45pt);
        \draw[color=red] (8.8,-0.8) ellipse (30pt and 45pt);

        \draw (10,-.1) node[dot] () {};
        \draw (10.3,-.1) node[dot] () {};
        \draw (10.6,-.1) node[dot] () {};
        
        \draw (10,-1.5) node[dot] () {};
        \draw (10.3,-1.5) node[dot] () {};
        \draw (10.6,-1.5) node[dot] () {};
 \end{tikzpicture}
\begin{tikzpicture}[xscale =0.7]
    \tikzstyle{white node}=[draw,circle,fill=black,minimum size=4pt,inner sep=0pt]
    \tikzstyle{dot}=[draw,circle,fill=black,minimum size=1pt,inner sep=0pt]
    \tikzstyle{red node}=[draw,circle,color=red!50,fill=red!50,minimum size=7pt,inner sep=0pt]
    \tikzstyle{red circle}=[draw,circle,color=red,minimum size=75pt,inner sep=0pt]
    \tikzstyle{red edge} = [draw,line width=2pt,-,red!50]
    \tikzstyle{red dot}=[draw,circle,color=red,fill=red,minimum size=2pt,inner sep=0pt]
          
                 \draw (0,0) node[white node] (v) {}
        -- ++(150:2cm) node[red node] (1)  {}
        -- ++(150:2cm) node[white node] ()  {};
 \draw[rotate=150,color=red] (2cm,0cm) ellipse (65pt and 7pt); 
 
         \draw (0,0) node[white node] (v) {}
        -- ++(120:1.15cm) node[red node] (2)  {}
        -- ++(120:1.15cm) node[white node] () {};
 \draw[rotate=120,color=red] (1.15cm,0cm) ellipse (40pt and 7pt); 
      
        \draw (v)
        -- ++(60:1.15cm) node[red node] (3)  {}
        -- ++(60:1.15cm) node[white node] ()  {};
\draw[rotate=60,color=red] (1.15cm,0cm) ellipse (40pt and 7pt);

        \draw (v)
        -- ++(30:2cm) node[red node] (4)  {}
        -- ++(30:2cm) node[white node] ()  {};
\draw[rotate=30,color=red] (2cm,0cm) ellipse (65pt and 7pt);

                \path[red edge] (1) -- (2);
                \path[red edge] (2) -- (3);
                \path[red edge] (3) -- (4);
                \draw[->,line width=2pt,red!50] (4) -- ++(0:1.6cm);
      \draw (0,-.5) node (x) {};
\end{tikzpicture}
\caption{Examples of tree-decompositions (in red) of graphs (in black) failing to display their boundaries.}
    \label{fig:my_labelBLA}
\end{figure}

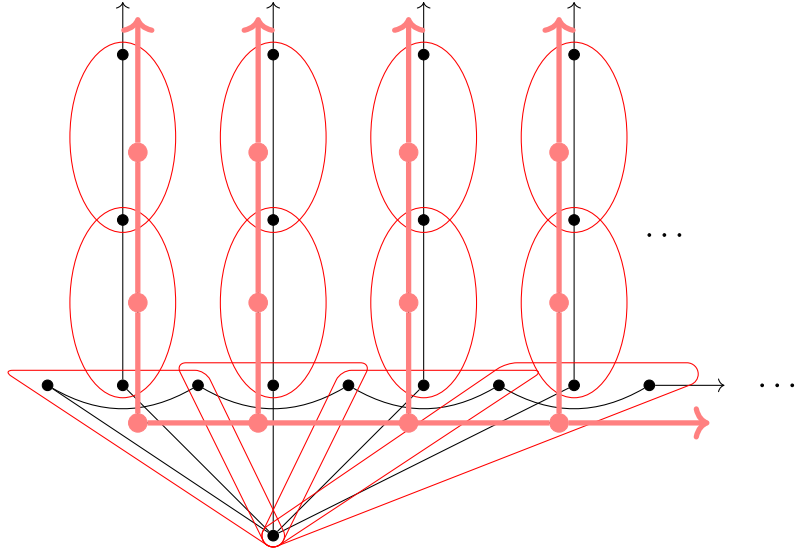
\begin{figure}
\begin{tikzpicture}
    \tikzstyle{white node}=[draw,circle,fill=black,minimum size=4pt,inner sep=0pt]
    \tikzstyle{dot}=[draw,circle,fill=black,minimum size=0pt,inner sep=0pt]
    \tikzstyle{p}=[draw,circle,fill=black,minimum size=1pt,inner sep=0pt]
    \tikzstyle{red node}=[draw,circle,color=red!50,fill=red!50,minimum size=7pt,inner sep=0pt]
    \tikzstyle{red circle}=[draw,circle,color=red,minimum size=75pt,inner sep=0pt]
    \tikzstyle{red edge} = [draw,line width=2pt,-,red!50]
    \tikzstyle{edge} = [draw,line width=0.2pt,-]
    \tikzstyle{red dot}=[draw,circle,color=red,fill=red,minimum size=2pt,inner sep=0pt]
      \draw (0,0) node[white node] (c) {};
       \draw (-3,2) node[white node] (1) {};
     \draw (-2,2) node[white node] (x1) {};
     \draw (-1,2) node[white node] (2) {};
     \draw (0,2) node[white node] (y1) {};
     \draw (1,2) node[white node] (3) {};
     \draw (2,2) node[white node] (z1) {};
     \draw (3,2) node[white node] (4) {};
     \draw (4,2) node[white node] (q1) {};
     \draw (5,2) node[white node] (5) {};
     
       \draw (c) -- (1);
     \draw (1) to [out=-30,in=-150] (2);
 \draw (2) to [out=-30,in=-150] (3);
 \draw (3) to [out=-30,in=-150] (4);
 \draw (4) to [out=-30,in=-150] (5); 
\draw[->,out=-30,in=-150] (5) -- ++(0:1cm);  

       \draw (c) -- (x1)
        -- ++(90:1.1cm) node[dot] (a1)  {}
        -- ++(90:1.1cm) node[white node] () {}
        -- ++(90:1.1cm) node[dot] (a2)  {}
        -- ++(90:1.1cm) node[white node] (a3) {};
        \draw[->] (a3) -- ++(90:0.7cm);

        \draw (c) -- (y1)
        -- ++(90:1.1cm) node[dot] (b1)  {}
        -- ++(90:1.1cm) node[white node] ()  {}
        -- ++(90:1.1cm) node[dot] (b2)  {}
        -- ++(90:1.1cm) node[white node] (b3)  {};
        \draw[->] (b3) -- ++(90:0.7cm);

               \draw (c) -- (z1)
        -- ++(90:1.1cm) node[dot] (c1)  {}
        -- ++(90:1.1cm) node[white node] ()  {}
        -- ++(90:1.1cm) node[dot] (c2)  {}
        -- ++(90:1.1cm) node[white node] (c3)  {};
        \draw[->] (c3) -- ++(90:0.7cm);
        
            \draw (c) -- (q1)
        -- ++(90:1.1cm) node[dot] (d1)  {}
        -- ++(90:1.1cm) node[white node] ()  {}
        -- ++(90:1.1cm) node[dot] (d2)  {}
        -- ++(90:1.1cm) node[white node] (d3)  {};
        \draw[->] (d3) -- ++(90:0.7cm);
 
\draw[rotate=90,color=red] (c1) ellipse (36pt and 20pt);
\draw[rotate=90,color=red] (c2) ellipse (36pt and 20pt);
\draw[rotate=90,color=red] (b1) ellipse (36pt and 20pt);
\draw[rotate=90,color=red] (b2) ellipse (36pt and 20pt);
\draw[rotate=90,color=red] (a1) ellipse (36pt and 20pt);
\draw[rotate=90,color=red] (a2) ellipse (36pt and 20pt);
\draw[rotate=90,color=red] (d1) ellipse (36pt and 20pt);
\draw[rotate=90,color=red] (d2) ellipse (36pt and 20pt);

\draw [red] (20:0.15cm) arc[start angle=20, end angle=-125,radius=0.15cm] [rounded corners] --  (-3.6,2.2)  -- (-0.9,2.2) [sharp corners]  -- cycle;

\draw [red] (-30:0.15cm) arc[start angle=-30, end angle=-150,radius=0.15cm] [rounded corners] -- (-1.3,2.3) -- (1.3,2.3) [sharp corners] -- cycle;

\draw [red] (-55:0.15cm) arc[start angle=-55, end angle=-200,radius=0.15cm] [rounded corners] -- (0.9,2.2) -- (3.6,2.2) [sharp corners] -- cycle;

\draw [red] (-60:0.15cm) arc[start angle=-60, end angle=-240,radius=0.15cm] [rounded corners] -- (3.1,2.3) [sharp corners] -- (5.5,2.3) arc[start angle=90, end angle=-70,radius=0.15cm]  --  cycle;

       \draw (-1.8,1.5) node[red node] (r1) {}
        -- (-0.2,1.5) node[red node] (r2) {}
        --  (1.8,1.5) node[red node] (r3) {}
        --  (3.8,1.5) node[red node] (r4) {};
 \draw[->,line width=2pt,red!50] (r4) -- ++(00:2cm);
 
 \draw (r1)
       -- ++(90:1.6cm) node[red node] (r11)  {}
       -- ++(90:2cm) node[red node] (r12)  {};
 
 \draw[->,line width=2pt,red!50] (r12) -- ++(90:1.8cm);       
\draw (r2)
        -- ++(90:1.6cm) node[red node] (r21)  {}
        -- ++(90:2cm) node[red node] (r22)  {};
       \draw[->,line width=2pt,red!50] (r22) -- ++(90:1.8cm);  
\draw (r3)
 -- ++(90:1.6cm) node[red node] (r31)  {}
        -- ++(90:2cm) node[red node] (r32)  {};
       \draw[->,line width=2pt,red!50] (r32) -- ++(90:1.8cm); 
       
       \draw (r4)
 -- ++(90:1.6cm) node[red node] (r41)  {}
        -- ++(90:2cm) node[red node] (r42)  {};
       \draw[->,line width=2pt,red!50] (r42) -- ++(90:1.8cm);

              \path[red edge] (r1) -- (r4);
                \path[red edge] (r1) -- (r11);
                \path[red edge] (r11) -- (r12);
               \path[red edge] (r2) -- (r21);
              \path[red edge] (r21) -- (r22);
      \path[red edge] (r3) -- (r31);
      \path[red edge] (r31) -- (r32);
            \path[red edge] (r4) -- (r41);
      \path[red edge] (r41) -- (r42);

      \draw (5,4) node[p] () {};
        \draw (5.2,4) node[p] () {};
        \draw (5.4,4) node[p] () {};
        
         \draw (6.5,2) node[p] () {};
        \draw (6.7,2) node[p] () {};
        \draw (6.9,2) node[p] () {};
\end{tikzpicture}

\caption{Example of a tree-decomposition (in red) that displays all ends of a countable star of rays (in black) but fails to display them homeomorphically.}
    \label{fig:my_labelBLA2}
\end{figure}

At first glance, however, it does not seem useful at all when $f_\script{T}$ maps all ends of $G$ into $\Omega(T)$ but the function is very much non-injective. 
However, this information is enough to guarantee a normal spanning tree, from which the space $|G|$ is easily understood. Indeed, given previous work in the field due to Jung and Diestel \cites{jung1969wurzelbaume,diestel2006end,diestel1994depth}, it is not hard to verify that the following assertions are equivalent, see Theorem~\ref{thm:display_sets} for details:

\begin{itemize}
     \item There is a tree-decomposition of finite adhesion that (homeomorphically) displays $\Omega(G)$.
    \item There is a tree-decomposition of finite adhesion with boundary $\Omega(G)$.
     \item $|G|$ is (completely) metrizable.
        \item $V(G)$ is a countable union of closed sets in $|G|$.
    \item $G$ has a normal spanning tree.
\end{itemize}

Now our second main result provides a local version of the above equivalences, characterising precisely which subsets $\Psi$ of $\Omega(G)$ can be (homeomorphically) displayed. 
Indeed, a striking, recent result by Carmesin \cite{carmesin2014all} says that it is always possible to display the set of undominated ends of a graph $G$. In \cite{burger2022duality}, Bürger and Kurkofka partially localized Carmesin's result by constructing tree-decompositions of finite adhesion (with additional desirable properties) that display the boundary $\partial U$ of prescribed infinite sets of vertices $U \subseteq V(G)$ where none of the ends in $\partial U$ are dominated.
Carmesin also asked for a characterisation of those pairs of a graph $G$ and a subset $\Psi \subseteq \Omega(G)$ for which $G$ has a tree-decomposition displaying $\Psi$ \cite[p.~549]{carmesin2014all}. This problem has also been reiterated in \cite[Problem~3.22]{StarComb1StarsAndCombs}. Theorem~\ref{thm_main3} below answers this question. 

Another set of questions in infinite topological graph theory concerns so-called $\Psi$-graphs $|G|_\Psi$, i.e.\ subspaces of $|G|$ of the form $|G|_\Psi=G\cup \Psi \subseteq |G|$ for a set of ends $\Psi \subseteq \Omega(G)$. $\Psi$-graphs have been studied in connection with infinite matroids  \cites{bowler2013infinite,bowler2013ubiquity,diestel2017dual}: For example, the topological circles (copies of the unit circle $S^1$) in $|G|_\Psi$ form the cycles of an infinite matroid whenever $\Psi$ belongs to the Borel $\sigma$-algebra of $\Omega(G)$ \cite{bowler2013infinite}.

It turns out that the correct generalisation of the 3rd bullet above about metrizability of $|G|$ involves precisely the property of complete metrizability of $\Psi$-spaces.

\begin{thm}\label{thm_main3}
For any connected graph $G$ and a set $\Psi$ of ends of $G$ the following are equivalent:
\begin{enumerate}
  \item There is a tree-decomposition of finite adhesion homeomorphically displaying $\Psi$.
    \item There is a tree-decomposition of finite adhesion displaying $\Psi$.
    \item There is a tree-decomposition of finite adhesion with boundary $\Psi$.
    \item $|G|_\Psi$ is completely metrizable.
    \item\label{itemGdelta} $\Psi$ is $G_\delta$ in $|G|$.
\end{enumerate}
\end{thm}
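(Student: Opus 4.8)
The plan is to prove the cycle $(1)\Rightarrow(2)\Rightarrow(3)\Rightarrow(\ref{itemGdelta})\Rightarrow(1)$, which settles the equivalence of the three tree-decomposition conditions together with the $G_\delta$ condition, and then to fold in the metrizability statement (4) by establishing $(1)\Rightarrow(4)$ and $(4)\Rightarrow(\ref{itemGdelta})$. The implications $(1)\Rightarrow(2)\Rightarrow(3)$ are immediate from the definitions: a homeomorphism $f_{\mathcal{T}}\restrict\Psi$ is in particular a bijection onto $\Omega(T)$, and any tree-decomposition displaying $\Psi$ has boundary $\Psi$. So the real content lies in $(3)\Rightarrow(\ref{itemGdelta})$, in the construction $(\ref{itemGdelta})\Rightarrow(1)$, and in tying in (4).

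For $(3)\Rightarrow(\ref{itemGdelta})$ I would root $T$ at some $r$ and, for each node $t\neq r$ with parent separator $X_t$ and far side $U_t$ (the union of the parts in the component of $T-e_t$ avoiding $r$), let $\widehat U_t\subseteq|G|$ be the union of the basic open neighbourhoods $\widehat C$ (consisting of $C$, its ends, and the inner points of the $C$--$X_t$ edges) over all components $C$ of $G-X_t$ with $C\subseteq U_t\setminus X_t$. Each $\widehat U_t$ is open in $|G|$, and I would set $O_n:=\bigcup\{\widehat U_t:\operatorname{dist}_T(r,t)=n\}$. The claim is that $\Psi=\bigcap_{n\geq 1}O_n$, exhibiting $\Psi$ as $G_\delta$. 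Indeed, an end $\omega$ with $f_{\mathcal{T}}(\omega)=\eta\in\Omega(T)$ lies on the far side of every edge along the $r$--$\eta$ ray, hence meets every $O_n$, while an end with $f_{\mathcal{T}}(\omega)\in V(T)$ lies on the far side of only the finitely many edges between $r$ and $f_{\mathcal{T}}(\omega)$ and so eventually leaves the $O_n$. The point that makes $\bigcap_n O_n$ contain no vertices or edge-points is that every vertex $v$ occupies the parts of a connected subtree $T_v\subseteq T$ whose top node sits at a finite depth $d_v$, and a short computation shows $v\in O_n$ only for $n\leq d_v$; edge-points inherit this from their endpoints. Hence $\bigcap_n O_n=\Psi$.

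The heart of the theorem is the construction $(\ref{itemGdelta})\Rightarrow(1)$, which I expect to be the main obstacle. Writing $\Psi=\bigcap_n O_n$ with each $O_n$ open in $|G|$, the goal is to build a tree-decomposition of finite adhesion whose boundary is exactly $\Psi$ and for which $f_{\mathcal{T}}$ restricts to a homeomorphism onto $\Omega(T)$. I would construct $\mathcal{T}$ recursively along the levels of $T$, at each step choosing finite separators inside the open sets $O_n$ so that precisely the ends of $\Psi$ are forced to survive to ends of $T$ while every other end is trapped in a part. Two features are delicate: (i) ends of $\Psi$ that are dominated cannot be peeled off by finite separators in the naive way, and here I would use the envelope technique of \cites{kurkofkapitz_rep,pitz_shortCarmesin} to absorb each dominating vertex correctly; and (ii) upgrading the bijection $f_{\mathcal{T}}\restrict\Psi\to\Omega(T)$ to a homeomorphism requires the separators to refine the neighbourhood bases coming from the $O_n$, for which the rayless normal trees of \cite{kurkofka2021approximating} supply the bookkeeping inside parts that keeps the adhesion finite while realising this refinement.

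Finally, for (4): to prove $(1)\Rightarrow(4)$ I would transport the canonical complete ultrametric on $\Omega(T)$ through the homeomorphism $f_{\mathcal{T}}\restrict\Psi$ and extend it over $G$ using the rooted tree structure, positioning each vertex at the top node of its subtree $T_v$; completeness holds since a Cauchy sequence either stabilises at bounded depth and converges inside $G$, or escapes to ever greater depth and thus converges to an end of $\Psi$ via the homeomorphism (this mirrors the global Jung--Diestel equivalence recalled before the theorem). For $(4)\Rightarrow(\ref{itemGdelta})$ I would argue topologically: the graph is dense in $|G|$, so $|G|_\Psi$ is dense in the Hausdorff space $|G|$, and a \v{C}ech-completeness argument (using the separation properties of $|G|$ from Section~\ref{sec_prelims}) shows the completely metrizable $|G|_\Psi$ to be $G_\delta$ in $|G|$, say $|G|_\Psi=\bigcap_m U_m$ with $U_m$ open in $|G|$. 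Moreover $\Psi=\Omega(G)\cap|G|_\Psi$ is closed in the metrizable space $|G|_\Psi$, hence $G_\delta$ there, say $\Psi=\bigcap_n(U_n'\cap|G|_\Psi)$ with $U_n'$ open in $|G|$. Intersecting the two countable families yields $\Psi=\bigcap_m U_m\cap\bigcap_n U_n'$, a $G_\delta$ in $|G|$, which closes the argument.
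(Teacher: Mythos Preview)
Your plan matches the paper's almost exactly: the same cycle of implications, the same \v{C}ech-completeness argument for $(4)\Rightarrow(\ref{itemGdelta})$, the same open-sets-by-level construction for $(3)\Rightarrow(\ref{itemGdelta})$, and the same envelope/rayless-normal-tree machinery for $(\ref{itemGdelta})\Rightarrow(1)$ (which the paper packages as Theorem~\ref{thm:topology_to_td}, working dually with the closed sets $X_n$ covering $V(G)\cup(\Omega\setminus\Psi)$ rather than with the open $O_n$).

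There is one genuine gap in your $(1)\Rightarrow(4)$ sketch. You verify that your tree-based metric is complete, but you never check that it induces the subspace topology that $|G|_\Psi$ inherits from $|G|$---and this is where the real work lies. The paper's verification of topological compatibility (showing that every basic open $\hat C_\varepsilon(S,\psi)$ contains a small metric ball around $\psi$) hinges on \emph{upwards connectedness} of the tree-decomposition: one needs that the far side $H_e$ of a sufficiently deep edge $e$ is connected, so that $H_e\subseteq C(S,\psi)$ once $H_e\cap S=\emptyset$; without this, vertices lying in other components of $H_e$ can sit in arbitrarily small metric balls around $\psi$ while lying outside $\hat C_\varepsilon(S,\psi)$. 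Condition~(1) of Theorem~\ref{thm_main3} does not assume upwards connectedness, so your direct argument does not go through as stated. The fix is the one the paper uses: the $(\ref{itemGdelta})\Rightarrow(1)$ construction via envelopes and rayless normal trees automatically produces an upwards connected decomposition (cf.\ Lemma~\ref{lem_upwardsconndisplay} and Theorem~\ref{thm:topology_to_td}), so once the equivalence $(1)\Leftrightarrow(\ref{itemGdelta})$ is established you may pass through $(\ref{itemGdelta})$ and assume upwards connectedness when proving $(1)\Rightarrow(4)$.
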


Note that from Theorem~\ref{thm_main3} one easily reobtains the above equivalences in the case $\Psi=\Omega$.  Indeed, only item (\ref{itemGdelta}) needs to be commented on: For this, note that saying that  $\Psi = \Omega$ is $G_\delta$ in $|G|$ means $\Psi=\Omega$ is a countable intersection of open sets, which turns out to be equivalent to $V(G)$ being a countable union of closed sets in $|G|$. Also note that $\Psi$ being a $G_\delta$ means that $\Psi$ is a fairly simple element of the Borel $\sigma$-algebra on $|G|$, and in fact, using Theorem~\ref{thm_main3} it is not hard to establish that $|G|_\Psi$ gives an infinite matroid in the special case from \cite{bowler2013infinite} where $\Psi \subseteq |G|$ is $G_\delta$. 

Carmesin's result that the undominated ends $\Psi$ of any connected graph can always be displayed now follows easily from Theorem~\ref{thm_main3}: Simply note that fixing any vertex $v$ and considering the set $B_n(v)$ of all vertices in $G$ within graph distance at most $n$ from $v$, the set  $\Psi$ is the intersection of the countably  many open sets $O_n=|G| \setminus \overline{B_n(v)}$ (for $n \in \NN)$ and hence $G_\delta$, see Theorem~\ref{thm_carmensin_Gdelta}. 

Furthermore, Theorem~\ref{thm_main3} also provides tree-decompositions that (homeomorphically) display the undominated ends in the boundary $\partial U$ of any fixed infinite set of vertices $U \subseteq V(G)$, strengthening  the above mentioned result by Bürger and Kurkofka from \cite{burger2022duality}; see Theorem~\ref{thm_carljan_Gdelta}.

A number of natural questions remain on the topic which subsets of ends can be distinguished. 
\begin{problem}
Characterise which $\Psi \subseteq \Omega(G)$ can be distinguished.
\end{problem}
Given two distinct ends $\omega_1, \omega_2$ of a graph $G$ write $n(\omega_1,\omega_2) \in \NN$ for the minimal order of a separation in $G$ that is oriented differently by  $\omega_1 $ and $ \omega_2$.
We say that a tree-decomposition $\mathcal{T}$ with decomposition tree $T$ \emph{efficiently distinguishes} a set of ends $\Psi$ if $\mathcal{T}$ distinguishes $\Psi$ with the additional property that for each $\psi_1 \neq \psi_2 \in \Psi$ there is an edge $e$ on the path in $T$ between $f_\mathcal{T}(\psi_1)$ and $f_\mathcal{T}(\psi_2)$ with $|X_e| = n(\omega_1,\omega_2)$.
\begin{problem}
Characterise which $\Psi \subseteq \Omega(G)$ can be efficiently distinguished.
\end{problem}
An end $\omega$ of a graph \emph{thin} if all families of disjoint $\omega$-rays are finite, and \emph{thick} otherwise.
Our next problem extends a problem of Diestel \cite{diestel1992end}, asking for which graphs there is a tree-decomposition of finite adhesion displaying precisely its thin ends.
Carmesin \cite{carmesin2014all} constructed a graph for which there is no such tree-decomposition, and we construct a different counterexample in Example \ref{thin_ends} with help of our characterisation of displayable sets of ends from Theorem \ref{thm_main3}.
We propose a different way in which a tree-decomposition of finite adhesion might distinguish the thin ends from the thick ends and ask which other bipartitions of $\Omega(G)$ can be distinguished in the same way:
\begin{problem}
Characterise for which bipartitions $\Omega(G) = \Omega_1 \sqcup \Omega_2$ there is a tree-decompositions $\mathcal{T}$ of finite adhesion with $f_\mathcal{T}(\Omega_1) \cap f_\mathcal{T}(\Omega_2)= \emptyset$.
\end{problem}

We conclude with two problems concerning metrizability in end spaces.
\begin{problem}
\label{prob16}
Characterise which subspaces $\Psi \subseteq \Omega(G)$ are metrizable or completely metrizable.
\end{problem}

\begin{problem}
Characterise which spaces $|G|_\Psi$ are metrizable.
\end{problem}
For $\Psi = \Omega(G)$, an answer to Problem~\ref{prob16} is given in  \cite{kurkofka2021approximating}.

\section{Preliminaries}\label{sec_prelims}

\subsection{Ends} 

For graph theoretic terms we follow the terminology in \cite{diestel2015book}, and in particular \cite[Chapter~8]{diestel2015book} for ends in graphs. A $1$-way infinite path is called a \emph{ray} and the subrays of a ray are its \emph{tails}. Two rays in a graph $G = (V,E)$ are \emph{equivalent} if no finite set of vertices separates them; the corresponding equivalence classes of rays are the \emph{ends} of $G$. If $\omega$ is an end of $G$ and $R\in\omega$, we call $R$ an $\omega$-ray. The set of ends of a graph $G$ is denoted by $\Omega = \Omega(G)$.

The \emph{degree} $\deg(\omega)$
of an end $\omega$ is the supremum of the sizes of collections of pairwise disjoint rays in $\omega$; Halin showed that this supremum is always attained, see~\cite[Theorem~8.2.5]{diestel2015book}. Ends are called \emph{thin} if they have finite degree, and \emph{thick} otherwise.

We say that a vertex $v$ \emph{dominates} a ray $R$ if there is a subdivided star with centre $v$ and leaves in $R$. Whenever two rays $R$ and $R'$ are equivalent, a vertex dominates $R$ if and only if it dominates $R'$. Thus we can say that a vertex \emph{dominates} an end $\omega$ if and only if it dominates one ray (and thus all rays) from $\omega$. In this case, we say $\omega$ is \emph{dominated}. 

\subsection{Ends and directions}
If $X \subseteq V$ is finite and $\omega \in \Omega$, 
there is a unique component of $G-X$ that contains a tail of every $\omega$-ray. We denote this component by $C(X,\omega)=C_G(X,\omega)$ and say that $\omega$ \emph{lives in $C(X,\omega)$}.

A \emph{direction} on $G$ is a function $d$ that assigns to every finite $X \subseteq V$ one of the components of $G-X$ so that $d(X) \supseteq d(X')$ whenever $X \subseteq X'$. For every end $\omega$, the map $X \mapsto C(X,\omega)$ is easily seen to be a direction. Conversely, every direction is defined by an end in this way:

\begin{thm}[Diestel \& K\"uhn {\cite{diestel2003graph}}]
\label{thm_direction}
For every direction $d$ on a graph $G$ there is an end $\omega$ such that $d(X) = C(X, \omega)$ for every finite $X \subseteq V(G)$.
\end{thm}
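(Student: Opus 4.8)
The plan is to realise the direction $d$ by an actual ray. Concretely, I would construct a ray $R = x_0 x_1 x_2 \cdots$ with the property that for every finite $X \subseteq V(G)$ the ray $R$ has a tail inside $d(X)$. Granting such an $R$, the end $\omega$ containing it is the desired one: by definition $C(X,\omega)$ is the unique component of $G - X$ containing a tail of every $\omega$-ray, and since a tail of $R$ lies in $d(X)$, this forces $C(X,\omega) = d(X)$ for every finite $X$.

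Two preliminary observations drive the construction. First, every value $d(X)$ is infinite: if $d(X)$ were finite, then for $X' = X \cup V(d(X))$ the requirement $d(X') \subseteq d(X)$ would clash with $d(X') \cap V(d(X)) = \emptyset$, contradicting $d(X') \neq \emptyset$. Second, a connectivity/adjacency lemma: for finite $X$ and a vertex $v \in d(X)$, writing $X' = X \cup \{v\}$, the component $d(X')$ is one of the components of $G[d(X)] - v$, and since $G[d(X)]$ is connected, $v$ has a neighbour in $d(X')$. This lets me advance a path one edge deeper into the direction: having built $x_0 \cdots x_n$ with $x_n \in d(X_{n-1})$, where $X_n := \{x_0,\dots,x_n\}$, I may pick $x_{n+1}$ to be a neighbour of $x_n$ in $d(X_n)$. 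Iterating produces a ray whose tail from $x_{n+1}$ lies in $d(X_n)$ for every $n$; in particular $C(X_n, \omega) = d(X_n)$ along the committed separators.

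It remains to upgrade ``tail in $d(X_n)$ for the committed sets $X_n$'' to ``tail in $d(X)$ for every finite $X$'', and this is where the main work lies. The clean case is captured by the following computation: if a finite $X$ satisfies $X \cap d(X_n) = \emptyset$ for some $n$, then $d(X_n)$ is still a single component of $G - (X \cup X_n)$, so $d(X \cup X_n) = d(X_n) \subseteq d(X)$, and the tail of $R$ already sitting in $d(X_n)$ therefore lands in $d(X)$ as desired. Thus the only obstruction is a vertex of $X$ that remains inside $d(X_n)$ for all $n$.

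The main obstacle is precisely to rule this out, i.e.\ to steer the construction so that it converges to $d$ rather than getting trapped. A naive greedy choice can fail: if at some stage $d(X_n)$ equals all of $G - x_n$ (for instance near a cut vertex whose removal keeps the graph connected), an arbitrary neighbour of $x_n$ may lead the ray permanently into a component different from the one $d$ selects deeper down. I would therefore refine the extension rule so that each step pushes into a strictly smaller (or, when this is impossible, a stable) direction component, guaranteeing that every fixed finite $X$ is eventually disjoint from $d(X_n)$; the genuinely persistent vertices turn out to be exactly those dominating $\omega$, and for these the identity $C(X,\omega)=d(X)$ must be verified by a separate, direct argument. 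Managing this convergence—especially over graphs that are neither locally finite nor countable, where one cannot simply enumerate all finite sets $X$—is the crux of the proof.
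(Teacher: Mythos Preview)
The paper does not prove this theorem; it is quoted from Diestel and K\"uhn with a citation only. So there is no in-paper proof to compare against, and the question is simply whether your sketch stands on its own.

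Your opening moves are correct and standard: every $d(X)$ is infinite, the neighbour-in-$d(X_n)$ step always succeeds, and the resulting ray $R$ has a tail in each $d(X_n)$. You also correctly isolate the real difficulty, namely that for an arbitrary finite $X$ one needs $X \cap d(X_n) = \emptyset$ for some $n$, and that in uncountable graphs this cannot be arranged by enumeration. But the proposal stops short of resolving it; the final paragraph is an outline of intentions rather than an argument.

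Two concrete issues with the sketched refinement. First, ``push into a strictly smaller direction component'' is automatic and buys nothing: since $x_{n+1} \in d(X_n) \setminus d(X_{n+1})$ you always have $d(X_{n+1}) \subsetneq d(X_n)$, yet $\bigcap_n d(X_n)$ can still be huge (take $G$ an uncountable clique). Second, the assertion that ``the genuinely persistent vertices turn out to be exactly those dominating $\omega$'' is unjustified. Persistence of $v$ means $v \in d(X_n)$ for all $n$, i.e.\ $v$ is connected to the tail of $R$ in $G - X_n$ for the particular sets $X_n$ your construction happens to commit to; it does not follow that $v$ cannot be separated from $R$ by some other finite set. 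And even if this equivalence held, you still owe the promised ``separate, direct argument'' that $C(X,\omega)=d(X)$ when $X$ contains such vertices. As written, the crux you identify is named but not overcome; the actual Diestel--K\"uhn proof requires genuine additional work at exactly this point.
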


\subsection{Star-Comb Lemma}

Given a set of vertices $U$, a \emph{comb attached to $U$} consists of a ray $R$ together with infinitely many disjoint $R$--$U$ paths (possibly trivial). A \emph{star attached to $U$} is a  subdivided infinite star with all leaves in $U$.

\begin{lemma}[Star-Comb Lemma {\cite[Lemma~8.2.2]{diestel2015book}}]
\label{lem_starcomb}
Let $U$ be an infinite set of vertices in a connected graph $G$. Then $G$ contains a star or a comb attached to $U$.
\end{lemma}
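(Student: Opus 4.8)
The plan is to prove the Star-Comb Lemma by a direct dichotomy based on whether infinitely many vertices of $U$ can be reached from a fixed vertex by internally disjoint paths. Since $G$ is connected and $U$ is infinite, I would first observe that between any finite subset of $U$ and any other vertex there exist connecting paths, so the real question is about the \emph{width} available: either one vertex can serve as the centre of an infinite subdivided star into $U$, or no vertex can, in which case I expect to extract a ray together with infinitely many disjoint paths to $U$ (a comb).

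\textbf{Approach via a maximal collection of disjoint paths.} Concretely, I would fix a vertex $u_0 \in U$ and consider, by Zorn's Lemma or by a greedy construction, a maximal family $\mathcal{P}$ of internally disjoint paths from $u_0$ to $U \setminus \{u_0\}$. If $\mathcal{P}$ is infinite, these paths form a subdivided infinite star centred at $u_0$ with all leaves in $U$, which is exactly a star attached to $U$, and we are done. The interesting case is when every such maximal family is finite. I would then argue that the finiteness of all such families, combined with $U$ being infinite, forces the existence of a finite separator behind which infinitely many vertices of $U$ still lie, and I would recurse or iterate this to build the spine of a comb.

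\textbf{Building the comb.} The cleaner route I would actually take for the comb case is to build the ray $R$ step by step together with the teeth. Starting from some vertex, since $U$ is infinite and $G$ is connected, infinitely many vertices of $U$ lie in a single component after removing any finite set; using this I would construct nested connected subgraphs each still meeting $U$ infinitely, and pick a sequence of vertices tending to an end, extracting an actual ray $R$ and, along the way, infinitely many disjoint $R$--$U$ paths. The formal device that makes this work is to repeatedly apply connectedness to route a new disjoint path to a fresh vertex of $U$ while extending the spine, keeping all the teeth pairwise disjoint and disjoint from later parts of the spine.

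\textbf{Main obstacle.} The delicate point, and the one I would spend the most care on, is ensuring \emph{disjointness} of the teeth from each other and from the ray as the construction proceeds: naive path-routing can reuse vertices already committed to the spine or to earlier teeth. I expect the correct bookkeeping is to fix, at each stage, the finite set of vertices used so far, remove it, and exploit that one component of the remainder still contains infinitely many vertices of $U$ (this is where the dichotomy with the star case is essential — if no such component existed we would be in the bounded-degree situation yielding a star instead). Managing this invariant — "after each finite stage, an infinite subset of $U$ remains reachable in a single component avoiding everything used so far" — is the heart of the argument, and it is precisely the failure of the star alternative that guarantees this invariant can be maintained to produce the comb.
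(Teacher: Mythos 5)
The paper offers no proof of this lemma: it is quoted verbatim from Diestel's textbook (Lemma~8.2.2 there), where it is proved by growing a tree $T\subseteq G$ every edge of which lies on a path in $T$ between two vertices of $U$, and then applying K\"onig's infinity lemma -- a vertex of infinite degree in $T$ is the centre of a star, and otherwise $T$ is an infinite locally finite tree, hence contains a ray, from which the comb is extracted. Your direct dichotomy is a genuinely different route, and the pivotal auxiliary claim you isolate is correct: if no vertex of $G$ is the centre of an infinite subdivided star with all leaves in $U$, then for every finite $S\subseteq V(G)$ only finitely many components of $G-S$ can meet $U$ (otherwise some $s\in S$ would have neighbours in infinitely many of them and would be such a centre), so some component retains infinitely many vertices of $U$.

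Two points stand between your sketch and a proof. The minor one: for the star you need paths from the centre $v$ to \emph{pairwise distinct} vertices of $U$ that pairwise meet only in $v$; an infinite family of merely internally disjoint $u_0$--$U$ paths could all terminate in the same vertex of $U$ and yields no star attached to $U$. The substantive one is in the comb construction: after routing a path $P$ from the current spine endpoint $r$ to a fresh vertex $u\in U\cap D$ through the current good component $D$, the component $D'$ of $D-V(P)$ that still meets $U$ infinitely need not attach at $r$ nor at $u$; it is only guaranteed a neighbour at \emph{some} $p^*\in V(P)\setminus\{r\}$. Extending the spine first and hoping to reach the good component afterwards can therefore leave you stuck behind vertices already committed. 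The repair is to split $P$ at $p^*$: declare $rPp^*$ the spine extension and $p^*Pu$ the new tooth, and continue the construction from $p^*$ into $D'$. With this amendment your invariant is maintained -- each tooth lies in $V(P)$, which is disjoint from $D'$, while all later material lies in $D'\cup\{p^*\}$, so the teeth stay pairwise disjoint and meet the eventual ray only in their first vertices, and the spine grows by at least one vertex per step into a genuine ray. As written, the proposal leaves precisely this step, the only one where the argument can break, unresolved.
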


\subsection{End spaces}\label{section:end_spaces}
If $X \subseteq V$ is a finite set of vertices and $\omega \in \Omega$, 
then $\Omega(X,\omega) = \Omega_G(X,\omega) =  \set{\phi \in \Omega}:{C(X,\phi) = C(X,\omega)}$ denotes the set of all ends that live in $C(X,\omega)$. We put $\hat{C}(X,\omega) = C(X,\omega) \cup \Omega(X,C)$.

The collection of singletons $\{v\}$ for $v \in V(G)$ together with all sets of the form $\hat{C}(X,\omega)$ for finite $X\subseteq V$ and $\omega \in \Omega(G)$ forms a basis for a topology on $V(G) \cup \Omega(G)$.
This topology is Hausdorff, and it is \emph{zero-dimensional} in the sense that it has a basis consisting of closed-and-open sets. 

Given a set of vertices $U \subseteq V(G)$, we write $\partial U$ for its boundary, i.e.\ the set of ends in $\overline{U}$. It is well-known that $\omega \in \partial U$ if and only if there is a comb attached to $U$ with spine in $\omega$.

If $H$ is a subgraph of $G$, then rays equivalent in $H$ remain equivalent in $G$; in other words, every end of $H$ can be interpreted as a subset of an end of $G$, so the natural inclusion map $\iota \colon \Omega(H) \to \Omega(G)$ is well-defined. A subgraph $H \subseteq G$ is \emph{end-faithful} if this inclusion map $\iota$ is a bijection from $\Omega(H)$ onto $\partial H \subseteq \Omega(G)$.\footnote{In the literature, the term end-faithful subgraph is sometimes used only for subgraphs $H \subseteq G$ with $\partial H = \Omega(G)$.}

We now describe one common way to extend this topology on $V(G) \cup \Omega(G)$ to a topology on $|G| = G \cup \Omega(G)$, the graph $G$ together with its ends. 
This topology, called \textsc{MTop}, has a basis formed by all open sets of $G$ considered as a metric length-space (i.e.\ every edge together with its endvertices forms a unit interval of length~$1$, and the distance between two points of the graph is the length of a shortest arc in $G$ between them), together with basic open neighbourhoods for ends of the form
\begin{align*}
    \hat{C}_\varepsilon(X,\omega) := \hat{C}(X,\omega)  \cup \mathring{E}_\varepsilon(X, C(X,\omega)),
\end{align*}
where $\mathring{E}_\varepsilon(X, C(X,\omega))$ denotes the open ball around $C(X,\omega)$ in $G$ of radius $\varepsilon < 1$.

\subsection{Tree orders and normal trees}
The \emph{tree order} of a tree $T$ with root $r$ is a partial order on $V(T)$ which is defined by setting $u \leq v$ if $u$ lies on the unique path $rTv$ from $r$ to $v$ in $T$. Given $n \in \mathbb{N}$, the \emph{$n$th level} $T^n$ of $T$ is the set of vertices at distance $n$ from $r$ in $T$, and by $T^{\leq n}$ we denote the union over the first $n$ levels. The \emph{down-closure} of a vertex $v$ is the set $\lceil v \rceil := \{\,u \colon u \leq v\,\}$; its \emph{up-closure} is the set $\lfloor v \rfloor := \{\,w \colon v \leq w\,\}$. The down-closure of $v$ is always a finite chain, the vertex set of the path $rTv$. A ray $R \subseteq T$ starting at the root is called a \emph{normal ray} of $T$.

A rooted spanning tree $T$ of a graph $G$ is \emph{normal} in~$G$ if the endvertices of every edge of $G$ are comparable in the tree order of~$T$. Normal spanning trees are always end-faithful \cite[Lemma~8.2.3]{diestel2015book}.

A rooted, not necessarily spanning, tree $T$ contained in a graph $G$ is \emph{normal} in~$G$ if the endvertices of every \mbox{$T$-path} in $G$ are comparable in the tree-order of~$T$. 
Here, for a given subgraph $H \subseteq G$, a path $P$ in $G$ is said to be an $H$-\emph{path} if $P$ is non-trivial and meets $H$ exactly in its endvertices. Clearly, if $T$ is spanning, this reduces to the earlier condition, as in this case all $T$-paths are chords.
We remark that for a normal tree $T \subseteq G$ the neighbourhood $N(D)$ of every component $D$ of $G-T$ forms a chain in $T$. 
The following result can be found in \cite{kurkofka2021approximating}.
\begin{thm}
\label{thm:RNT}
Let $G$ be a connected graph. For every open cover $\cO$ of $\Omega(G)$, there is a rayless normal tree $T$ in $G$ such that for every component $C$ of $G-T$ there is a set $O\in\cO$ such that $\partial C\subseteq O$.
\end{thm}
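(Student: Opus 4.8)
The plan is to build the tree greedily by a recursion that grows normal branches into the graph and stops growing into a region as soon as its boundary has become small enough to fit into a single cover element. The crucial simplification is this: since $T$ will be rayless, every chain in its tree order is finite, so for every component $C$ of $G-T$ the neighbourhood $N(C)$, which is always a chain in a normal tree, is a \emph{finite} set; consequently $C$ is already a component of $G-N(C)$, and its boundary is the basic clopen set $\partial C=\Omega(N(C),\omega)$ for any end $\omega$ living in $C$. Thus the goal ``$\partial C\subseteq O$ for some $O\in\cO$'' becomes the purely finitary condition $\Omega(N(C),\omega)\subseteq O$. I would also record the elementary fact that, since the sets $\Omega(X,\omega)$ form a basis of the subspace topology on $\Omega$, every end has a basic neighbourhood contained in some member of $\cO$.

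Fix a well-order of $V(G)$ and a root $r$. I would maintain a family of pairwise disjoint \emph{active} components, each a component $D$ of $G$ minus the current (finite, chain) set of tree-vertices below it, together with the invariant that the tree built so far is normal in $G$. Processing an active component $D$ whose neighbourhood $N(D)$ is a chain with top $t$: if $\partial D=\Omega(N(D),\omega)\subseteq O$ for some $O\in\cO$, declare $D$ \emph{finished} and grow no further into it; otherwise take a path $P$ in $D$ from a neighbour of $t$ down to the $<$-least vertex of $D$, attach $P$ below $t$ (child by child), and replace $D$ by the components of $D-P$, which become the new active components. This single-path extension preserves normality by the standard argument: any component $D'$ created inside $D$ satisfies $N(D')\subseteq\lceil t\rceil\cup V(P)$, which is again a chain. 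Let $T$ be the union of everything added; the components of $G-T$ are then exactly the finished components, each satisfying $\partial C\subseteq O$ by the stopping rule. It remains to prove that $T$ is rayless and that the recursion really does finish every branch.

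The heart of the proof --- and the step I expect to be the main obstacle --- is showing that there are no infinite branches. An infinite branch $r=v_0<v_1<\cdots$ is a normal ray and hence converges to a unique end $\omega$, which lies in some $O\in\cO$; pick a finite $Y$ with $\Omega(Y,\omega)\subseteq O$. Writing $S_n=\lceil v_n\rceil$ and $D_{n+1}=C(S_n,\omega)$ for the active component carrying the branch, capture occurs as soon as $Y\cap D_{n+1}=\emptyset$: for then $D_{n+1}\subseteq C(Y,\omega)$, so $\partial D_{n+1}=\Omega(S_n,\omega)\subseteq\Omega(Y,\omega)\subseteq O$, and the branch would have stopped at stage $n$ --- a contradiction. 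Thus everything reduces to showing that $\bigcap_n D_n$ contains no vertex, i.e.\ that every vertex (in particular each of the finitely many vertices of $Y$) eventually leaves the nested components $D_n$. The least-vertex rule forces the minima $\min D_n$ to strictly increase along the branch, which settles this at once when $V(G)$ is countable (enumerated in order type $\omega$), and hence proves the theorem in that case. For general $G$, however, a single countable branch need not exhaust a fixed finite $Y$ lying high in the well-order, so the clean minima argument breaks down; here I would refine the exhaustion --- for instance by interleaving, at each active component, the absorption of a supply of vertices chosen so that every finite separator in the $\omega$-direction is eventually crossed --- and this bookkeeping for uncountable graphs is the genuinely delicate part of the argument.
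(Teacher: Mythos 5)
First, a point of reference: the paper does not prove Theorem~\ref{thm:RNT} at all --- it is imported verbatim from \cite{kurkofka2021approximating} --- so there is no internal proof to compare against. Judged on its own terms, your reduction is correct: for a rayless normal tree every $N(C)$ is a finite chain, $C$ is a component of $G-N(C)$, and $\partial C=\Omega(N(C),\omega)$, so the covering condition is indeed the finitary condition you state; the single-path extension preserves normality; and the capture argument (if the active component $D$ carrying a branch satisfies $D\cap Y=\emptyset$ for a finite $Y$ with $\Omega(Y,\omega)\subseteq O$, then $D\subseteq C(Y,\omega)$, $\partial D\subseteq\Omega(Y,\omega)\subseteq O$, and the branch stops) is sound. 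Together with the least-vertex rule this gives a complete and correct proof when $V(G)$ is countable. One detail worth tightening even there: raylessness of $T$ is not by itself enough; you must also rule out an infinite strictly descending chain of \emph{unfinished} active components whose intersection still contains a vertex, since such a chain need not produce a ray in $T$ (the successive paths could all hang off the same node) but would leave a component of $G-T$ that was never tested against the stopping rule. Fortunately your minima argument kills exactly this: $\min D_n$ strictly increases, so $\bigcap_n D_n=\emptyset$ in order type $\omega$, which yields both raylessness and the fact that every component of $G-T$ is a finished one.

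The genuine gap is the one you flag yourself: the uncountable case, which is the actual content of the theorem. The obstruction is structural, not bookkeeping. Along any single branch the construction absorbs only countably many vertices, while the finite set $Y$ you must eventually evict from the active components depends on the end $\omega$ to which the branch converges --- and $\omega$ is only determined in the limit, so there is no stage at which you know which vertices to route the tree through. Worse, the vertices that can survive in $\bigcap_n D_n$ are precisely those not separated from $\omega$ by any finite subset of the branch (e.g.\ dominating vertices of $\omega$ lying outside it), and there may be uncountably many ends, each with its own such obstructing set; no countable ``supply of vertices to interleave'' chosen in advance at each active component can anticipate all of them. Your proposed fix is therefore a description of the difficulty rather than a resolution of it. Closing this gap requires a different mechanism --- in \cite{kurkofka2021approximating} the general case is handled by a substantially more involved argument (a transfinite induction on the size of the graph rather than a single greedy pass) --- so as it stands your proposal proves the theorem only for countable $G$.
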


Additionally, we need the following easy lemma: 
We say a set of vertices $U$ in a graph $G$ has \emph{finite adhesion}, if and only if every component of $G-U$ has a finite neighbourhood in $U$.

\begin{lemma}\label{lem:extend_RNT}
Let $G$ be a connected graph and $T$ a rayless normal tree in $G$. Then $T$ has finite adhesion in $G$. Moreover, for every finite set $U\subseteq V(G)$ there is a rayless normal tree $T^* \supseteq T$ in $G$ such that $U\subseteq V(T^*)$.
\end{lemma}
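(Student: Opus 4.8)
The plan is to establish the two assertions separately, deriving finite adhesion first and then using it to anchor the extension.

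For the first statement I would argue by contradiction from the quoted fact that the neighbourhood $N(D)$ of every component $D$ of $G-T$ is a chain in $T$. Suppose some $N(D)$ were infinite. Since every down-closure $\lceil v\rceil$ is a finite chain, each element of this chain has only finitely many predecessors in it, so the chain has order type $\omega$; writing it as $v_0<v_1<v_2<\cdots$ we get $\lceil v_0\rceil\subsetneq\lceil v_1\rceil\subsetneq\cdots$, where each path $rTv_n$ is an initial segment of $rTv_{n+1}$. Their union is then a (normal) ray of $T$, contradicting raylessness. Hence every $N(D)$ is finite and $T$ has finite adhesion.

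For the moreover-part, observe that as $U$ is finite it meets only finitely many components $D_1,\dots,D_m$ of $G-T$, and by the first part each $N(D_i)$ is a finite chain, say with top vertex $t_i$. I would reduce everything to the following sublemma: in a connected graph $H$ with root $r$ and finite vertex set $W$ there is a finite tree $S\subseteq H$ rooted at $r$, normal in $H$, with $W\cup\{r\}\subseteq V(S)$. Applying it with $H_i=G[D_i\cup\{t_i\}]$, root $t_i$ and $W=U\cap D_i$ produces finite trees $S_i$, and I set $T^*=T\cup\bigcup_{i\le m}S_i$, each $S_i$ hung below $t_i$ so that its new vertices become descendants of $t_i$. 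As the $D_i$ are distinct components this is a tree containing $U$; it is rayless because any ray, after its last visit to the finite set $\bigcup_i\bigl(V(S_i)\setminus V(T)\bigr)$, would have a tail lying entirely in $T$ whose edges are edges of $T$, giving a ray in the rayless $T$.

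The heart of the argument, and the step I expect to be the main obstacle, is the sublemma together with the verification that $T^*$ is normal in $G$. I would prove the sublemma by enlarging a finite normal tree one vertex of $W$ at a time: given finite normal $S$ and a vertex $w$ in a component $C$ of $H-S$, the chain $N(C)\cap V(S)$ has a top vertex $s$, and I attach a simple $s$–$w$ path $Q$ in $G[C\cup\{s\}]$ as a chain of descendants of $s$. Normality of $S\cup Q$ is then checked directly: components of $H-(S\cup Q)$ meeting $C$ have their $V(S)$-neighbours inside the chain $N(C)\cap V(S)$ (all ${\le}\, s$) and their $Q$-neighbours inside the chain below $s$ (all ${\ge}\, s$), so the two together remain a chain; components disjoint from $C$ are untouched by $Q$ and keep their previous chain neighbourhoods, and every chord of the new path lands at a vertex of $N(C)\cap V(S)\le s$. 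The same bookkeeping handles $T^*$: the interior of any $T^*$-path lies in a single component of $G-T$, which forces both endpoints either into $V(T)$ (use normality of $T$), into one $V(S_i)$ (use normality of $S_i$ in $H_i$, noting the interior avoids $S_i$), or one endpoint into a single $V(S_i)$ and the other into $N(D_i)\subseteq\lceil t_i\rceil$, so that the $T$-endpoint is ${\le}\, t_i\le$ the $S_i$-endpoint. The only delicate point is tracking these comparabilities across the three regimes; once the chain bookkeeping around each $t_i$ is in place, each case is routine.
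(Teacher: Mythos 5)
Your proof is correct and rests on the same key mechanism as the paper's: the neighbourhood of a component of $G-T$ is a chain by normality and finite by raylessness, and a path hung below the top vertex of that chain preserves both normality and raylessness. The paper packages the extension as a one-step maximality argument (take a rayless normal tree $T^* \supseteq T$ containing the most vertices of $U$ and extend it if some $u \in U$ is missing), whereas you iterate the same extension explicitly inside each relevant component and glue the resulting finite normal trees onto $T$ --- a difference of bookkeeping only, though your write-up does spell out the normality verification that the paper leaves implicit.
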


\begin{proof}
For the proof that $T$ has finite adhesion in $G$, let $C$ be any component of $G-T$. Since $T$ is normal, the neighbourhood of $C$ is a chain in the tree order of $T$, and this chain is finite because $T^*$ is rayless.

Next, let $T^*$ be a rayless normal tree in $G$ extending the tree $T$ which contains maximally many vertices from $U$. We show that $T^*$ contains all vertices from $U$. Suppose for a contradiction that there is a vertex $u\in U$ with $u\notin V(T^*)$ and let $C$ be the component of $G-T^*$ containing $u$. We showed in the first paragraph of this proof that the neighbourhood $N(C)$ of $C$ is a finite chain in the tree order of $T^*$. Let $v$ be its maximal element and $v'$ a neighbour of $v$ in $C$. Then the union of $T^*$ with the edge $vv'$ and a $v'$--$u$ path in $C$ is again a rayless normal tree with $T$ as a subgraph, contradicting the maximality of $T$.
\end{proof}

\subsection{Topological notions}
\label{sec_topnotions}
A subspace $Y$ of a topological space $X$ is \emph{discrete} if every singleton of $Y$ is open in the subspace topology.

A \emph{$G_\delta$-set} of a topological space $X$ is a countable intersection of open sets. An \emph{$F_\sigma$-set} is a countable union of closed sets. Note that the complement of a $G_\delta$-set is always a $F_\sigma$-set and vice versa.

\begin{lemma}
\label{lem_Fsigma}
Let $G$ be a graph and $\Psi\subseteq\Omega(G)$. Then $V(G)\cup\Psi$ is $F_\sigma$ in $|G|$ if and only if $G\cup\Psi$ is $F_\sigma$ in $|G|$.
\end{lemma}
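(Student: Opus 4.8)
The plan is to exploit the partition $|G| = V(G)\sqcup\mathring E\sqcup\Omega(G)$, where $\mathring E$ denotes the set of all inner edge points. Since every open edge is open in the metric length-space and hence in $|G|$, the set $\mathring E$ is open, so $V(G)\cup\Omega(G) = |G|\setminus\mathring E$ is closed. The forward implication is then immediate: if $G\cup\Psi$ is $F_\sigma$, then since $V(G)\cup\Psi = (G\cup\Psi)\cap(V(G)\cup\Omega(G))$ is the intersection of an $F_\sigma$-set with a closed set, it is again $F_\sigma$.

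The substance lies in the converse. Assume $V(G)\cup\Psi = \bigcup_{m} F_m$ with each $F_m$ closed in $|G|$; passing to $F_1\cup\dots\cup F_m$ we may assume the $F_m$ are increasing. Write $W_m := F_m\cap V(G)$, so the $W_m$ are increasing with $\bigcup_m W_m = V(G)$. For each $m$ set
\[ D_m := \bigcup\{\,\overline{e} : \text{both endvertices of } e \text{ lie in } W_m\,\}, \qquad B_m := F_m\cup D_m, \]
where $\overline e$ is the closed edge. Since every edge has both endvertices in $V(G) = \bigcup_m W_m$ and the $W_m$ increase, each closed edge lies in some $D_m$; hence $\bigcup_m B_m = (V(G)\cup\Psi)\cup G = G\cup\Psi$. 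It therefore suffices to show that each $B_m$ is closed.

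The key point -- and the main obstacle -- is to control the ends in the closure of $D_m$; here I must rule out accumulation of edges at ends, which is delicate precisely because $G$ may fail to be locally finite and may contain ends dominated by a single vertex. I would argue that $\overline{D_m}\cap\Omega(G)\subseteq\overline{W_m}$ by showing that every basic neighbourhood $\hat C_\varepsilon(X,\omega)$ of an end $\omega\in\overline{D_m}$ contains a vertex of $W_m$: such a neighbourhood meets $D_m$ in a point lying on some edge $e$ whose \emph{both} endvertices are in $W_m$, and whether this point lies in $\hat C(X,\omega)$ or only in the boundary fuzz $\mathring E_\varepsilon(X, C(X,\omega))$, in either case one endvertex of $e$ lies in $C(X,\omega)$ and hence in $W_m\cap\hat C_\varepsilon(X,\omega)$. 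Requiring \emph{both} endvertices in $W_m$ (rather than just one) is exactly what defeats the dominated-end difficulty, as a single dominating vertex contributes only edges with one far endvertex. Consequently $\overline{D_m}\cap\Omega(G)\subseteq\overline{W_m}\cap\Omega(G)\subseteq F_m$, while the metric structure on $G$ gives $\overline{D_m}\cap V(G)\subseteq W_m$ and $\overline{D_m}\cap\mathring E\subseteq D_m$. Thus $\overline{D_m}\subseteq B_m$, and as $F_m$ is closed we get $\overline{B_m}=F_m\cup\overline{D_m}\subseteq B_m$, so $B_m$ is closed and $G\cup\Psi=\bigcup_m B_m$ is $F_\sigma$, as desired.

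I expect the only real work to be the neighbourhood computation in the previous paragraph -- distinguishing the two cases according to whether the witnessing $D_m$-point lies in the component or only in the $\varepsilon$-fuzz -- together with the routine verification that a set of inner edge points can only accumulate, within $G$ itself, at vertices or at inner points of edges it already contains. Notably, this argument avoids having to decide whether $\mathring E$ alone is $F_\sigma$ (which can fail when $G$ has uncountably many edges accumulating at ends); the sets $W_m$ carry the relevant ends into the closed pieces $F_m$.
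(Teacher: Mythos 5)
Your proof is correct and takes essentially the same route as the paper's: the easy direction by intersecting with the closed set $V(G)\cup\Omega(G)$, and the converse by writing $G\cup\Psi$ as the union of the sets $F_m\cup G[W_m]$ (your $B_m$ is exactly the paper's $G[V_n]\cup X_n$). The only difference is that you carry out the neighbourhood computation verifying closedness of these sets, which the paper merely asserts.
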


\begin{proof}
 The backwards direction follows from that fact that $V \cup \Omega$ is closed in $|G|$, so $V\cup\Psi$ is closed in $G\cup\Psi$, and closed subsets of $F_\sigma$-sets are themselves $F_\sigma$.
 
 Conversely, assume $V \cup \Psi = \bigcup_{n \in \NN} X_n$ is a countable union of closed sets $X_n$ of vertices and ends  in $|G|$. Without loss of generality, we have $X_{n} \subseteq X_{n+1}$. Let $V_n = X_n \cap V(G)$. Then $\bigcup_{n \in \NN} V_n = V(G)$ and $G = \bigcup_{n \in \NN} G[V_n]$. But then the induced subsets $G[X_n] := G[V_n] \cup X_n$ are also closed in $|G|$, and so $G \cup \Psi = \bigcup_{n \in \NN} G[X_n]$ is $F_\sigma$ in $|G|$, too. 
\end{proof}

A set of vertices $U$ in a graph $G$ is \emph{dispersed} if it can be separated from any ray in $G$ by a finite set of vertices. This is equivalent to the property of $U$ being closed in $|G|$.

Given a topological space $X$, the \emph{derived space} $X'$ is the subspace $X' \subseteq X$ obtained by removing all isolated points from $X$.
By transfinite induction, one defines a decreasing sequence of subsets of $X$ by setting 
$X^{(0)} = X$, $X^{(\alpha+1)} =( X^{(\alpha)})'$ in the successor case, and $X^{(\lambda)} = \bigcap_{\alpha < \lambda} X^{(\alpha)}$ for all limit ordinals $\lambda$. 
For cardinality reasons, this transfinite sequence must eventually be constant. The smallest ordinal $\alpha$ such that $X^{(\alpha+1)} = X^{(\alpha)}$ is called the \emph{Cantor–Bendixson rank} of $X$. If $X^{(\alpha)} = \emptyset$ for some ordinal $\alpha$, then $X$ is \emph{scattered}. 

A set of vertices $U \subseteq V(G)$ is \emph{slender} if $X = \overline{U} \subseteq |G|$ satisfies $X^{(n)} = \emptyset$ for some $n \in \NN$.

\section{Tree-decompositions}

A [rooted] \emph{tree-decomposition} of a graph $G$ is a pair $\script{T} = \p{T,\script{V}}$ where $T$ is a [rooted] tree and $\cV = (V_t \colon t \in T)$ is a family of vertex sets of $G$ called \emph{parts} such that the following holds (see also \cite[\S12.3]{diestel2015book}):
\begin{enumerate}[label=(T\arabic*)]
\item for every vertex $v$ of $G$ there exists $t \in T$ such that $v \in V_t$;
\item for every edge $e$ of $G$ there exists $t \in T$ such that $e \in G[V_t]$; and
\item $V_{t_1} \cap V_{t_3} \subseteq V_{t_2}$ whenever $t_2$ lies on the $t_1$--$t_3$ path in $T$.
\end{enumerate}
Let $e = xy$ be any edge of $T$ and let $T_x$ and $T_y$ be the two components of $T-e$ with $x \in T_x$ and $y \in T_y$. Each edge $e=xy$ of $T$ in a tree-decomposition gives rise to a separator $X_e:=V_x \cap V_y$ called the separator \emph{induced by} the edge $e$, which separates $A_x=\bigcup_{t \in T_x} V_t$ from $A_y=\bigcup_{t \in T_y} V_t$.
The tree-decomposition has \emph{finite adhesion} if all separators of $G$ induced by the edges of $T$ are finite. 

Given a tree-decomposition $\script{T} = \p{T,\script{V}}$ of finite adhesion of $G$, any end $\omega$ of $G$ orients each edge $e=xy$ of $T$ according to whether $\omega$ lives in a component of $G[A_x] - X_e$ or $G[A_y]-X_e$. This orientation of $T$ points towards a node of $T$ or to an end of $T$, and $\omega$ \emph{lives} in that part for that node or \emph{corresponds} to that end, respectively.

Let $f_\script{T} \colon \Omega(G) \to V(T) \cup \Omega(T)$ be the function mapping every end of $G$ to the node or end of $T$ that it lives in or corresponds to, respectively. We say that $\script{T}$ \emph{distinguishes} the ends of $G$ if $f_\script{T}$ is injective, and it \emph{represents} the ends of $G$ if $f_\script{T}$ is bijective.

We call $f_\script{T}^{-1}[\Omega(T)]$ the \emph{boundary} of $\script{T}$, and  $f_\script{T}^{-1}[V(T)]$ the \emph{interior} of $\script{T}$. We say that $\script{T}$ \emph{displays} a subset $\Psi\subseteq\Omega(G)$ if $\Psi$ is the boundary of $\script{T}$ and $f_\script{T}\upharpoonright\Psi \to \Omega(T)$ is bijective, and it \emph{homeomorphically displays} $\Psi$ if $f_\script{T}\upharpoonright\Psi \to \Omega(T)$ is a homeomorphism. We say that $\script{T}$ [\emph{bijectively}] \emph{distributes} a subset $\Xi\subseteq\Omega(G)$ if $\Xi$ is the interior of $\script{T}$ and $f_\script{T}\upharpoonright\Xi \to V(T)$ is injective [bijective]. 
Finally, we say that $\script{T}$ \emph{realises} [\emph{represents}] a partition $\Omega(G) = \Xi \sqcup \Psi$ of the end space of $G$ if $\script{T}$ [bijectively] distributes $\Xi$ and displays $\Psi$.

We conclude this section with a sufficient condition for tree-decompositions to (homeomorphically) display their boundary. We say a rooted tree-decomposition $(T,\mathcal{V})$ is \emph{upwards connected} if for every edge $e=xy \in E(T)$ with $x<y$, the induced subgraph $H_e:=G[A_y \setminus A_x] = G[A_y] - X_e$ (with $A_x$, $A_y$ and $X_e$ as above) is non-empty and connected (or equivalently, $H_e$ is a component of $G-X_e$).

\begin{lemma}
\label{lem_upwardsconndisplay}
Every upwards connected rooted tree-decomposition $\script{T}=(T,\cV)$ of finite adhesion of a graph $G$ homeomorphically displays its boundary.
\end{lemma}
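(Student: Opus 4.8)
The plan is to verify the three requirements hidden in ``homeomorphically displays its boundary'': that $f:=f_\script{T}\restriction\Psi$ is onto $\Omega(T)$, injective, and a homeomorphism, where $\Psi=f_\script{T}^{-1}[\Omega(T)]$. Throughout I fix an end $\eta\in\Omega(T)$ with normal ray $R=t_0t_1t_2\cdots$, write $e_i=t_{i-1}t_i$, abbreviate $X_i:=X_{e_i}$ and $H_i:=H_{e_i}$, and set $A_{t_i}:=\bigcup_{t\geq t_i}V_t$. First I would record the structural picture: by upwards connectedness each $H_i$ is a non-empty connected component of $G-X_i$, and (using the interaction of the separation axiom (T3) with the nesting $\lfloor t_{i+1}\rfloor\subseteq\lfloor t_i\rfloor$) the $H_i$ form a decreasing chain $H_1\supseteq H_2\supseteq\cdots$. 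The reduction is then that an end $\omega$ of $G$ satisfies $f_\script{T}(\omega)=\eta$ \emph{exactly} when $\omega$ lives in every $H_i$, i.e.\ $C(X_i,\omega)=H_i$ for all $i$; this is because the orientation of $T$ induced by a single end is consistent and converges to $\eta$ precisely when it orients every edge of $R$ upward. So displaying $\eta$ reduces to: exactly one end of $G$ lives in all of the $H_i$.

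The engine of the whole argument, and the step I expect to be the main obstacle, is the claim that $\bigcap_i V(H_i)=\emptyset$. I would prove this directly from the tree-decomposition axioms: if some vertex $v$ lay in every up-part $A_{t_i}$ while avoiding every separator $X_i$, then the subtree $\{t:v\in V_t\}$ would meet $\lfloor t_i\rfloor$ for every $i$; since these up-closures are nested with empty intersection, connectedness of that subtree forces it to contain a tail of $R$, so $v\in V_{t_i}$ for all large $i$ and hence $v\in X_i=V_{t_{i-1}}\cap V_{t_i}$, a contradiction. From this I extract the two facts that everything else rests on: (a) for every finite $X\subseteq V(G)$ there is an $i$ with $H_i\cap X=\emptyset$; and (b) since the $H_i$ are non-empty, nested, and have empty intersection, each $H_i$ is in fact \emph{infinite} (so there are no ``finite branches'' sabotaging surjectivity).

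For surjectivity I would build a direction and invoke Theorem~\ref{thm_direction}. Given finite $X$, choose $i$ with $H_i\cap X=\emptyset$ by (a) and let $d(X)$ be the component of $G-X$ containing the connected set $H_i$; this is independent of the large index $i$ (as $H_{i'}\subseteq H_i$ lie in the same component) and is monotone in $X$, so $d$ is a direction and equals $X\mapsto C(X,\omega)$ for some end $\omega$. Checking $C(X_j,\omega)=H_j$ for each $j$ (pick $i>j$ with $H_i\cap X_j=\emptyset$, so $d(X_j)$ is the component containing $H_i\subseteq H_j$, namely $H_j$ itself) gives $f_\script{T}(\omega)=\eta$. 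Injectivity is then immediate from (a): if $\omega\neq\omega'$ both lived in all $H_i$, take a finite $S$ separating them and an $i$ with $H_i\cap S=\emptyset$; then the connected set $H_i$ lies in a single component of $G-S$, forcing $C(S,\omega)=C(S,\omega')$, a contradiction.

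Finally, for the homeomorphism I would work with the clopen basis $\{O_s:s\in V(T)\}$ of $\Omega(T)$, where $O_s$ is the set of ends of $T$ whose ray passes through $s$, together with the basis $\{\Omega_G(X,\omega)\}$ of $\Omega(G)$. For continuity of $f$, note that for $s$ with parent edge $e_s$ one has $f^{-1}(O_s)=\Psi\cap\{\omega\in\Omega(G):C(X_{e_s},\omega)=H_{e_s}\}$, and the latter set is either empty or a basic open set $\Omega_G(X_{e_s},\omega_s)$, hence open. For continuity of $f^{-1}$, fix $\omega\in\Psi$ with $f(\omega)=\eta$ and a basic neighbourhood $\Omega_G(X,\omega)$; choosing $i$ with $H_i\cap X=\emptyset$ by (a), every $\omega'\in\Psi$ with $f(\omega')$ through $t_i$ lives in $H_i$ and therefore in the single component $C(X,\omega)$ of $G-X$ meeting $H_i$, so $f^{-1}(O_{t_i})\cap\Psi\subseteq\Omega_G(X,\omega)$. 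As the $O_{t_i}$ form a neighbourhood base at $\eta$, this shows $f^{-1}$ continuous, and combined with bijectivity we conclude that $f$ is a homeomorphism of $\Psi$ onto $\Omega(T)$, i.e.\ $\script{T}$ homeomorphically displays its boundary.
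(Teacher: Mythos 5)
Your proof is correct and follows essentially the same route as the paper's: surjectivity via turning the nested, non-empty, connected sets $H_i$ with $\bigcap_i V(H_i)=\emptyset$ into a direction and invoking Theorem~\ref{thm_direction}, injectivity and continuity of $f^{-1}$ via connectedness of the $H_i$, and continuity of $f$ via the finite separators $X_e$. The only cosmetic differences are that you establish $\bigcap_i V(H_i)=\emptyset$ through the connected subtree $\{t : v\in V_t\}$ rather than the paper's direct level-counting observation, and you organise injectivity around fibres of $f$ rather than separating two given ends by a finite subtree.
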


\begin{proof}
Let $\Psi$ be the boundary of $\script{T}$. We show that $f:=f_\script{T} \upharpoonright \Psi : \Psi \to \Omega(T)$ is a homeomorphism.

For the proof that $f$ is injective, let $\psi_1\neq\psi_2\in\Psi$ and let $R_i$ be the $f(\psi_i)$-ray in $T$ starting in the root of $T$ for $i=1,2$.
There is a finite vertex set $S\subseteq V(G)$ such that $\psi_1$ and $\psi_2$ live in different components of $G-S$.
By (T1) there is a finite subtree $T'$ of $T$ containing the root of $T$ such that $S\subseteq\bigcup_{t\in T'}V_t=:G'$.
We denote the unique $T'$--$(T\setminus T')$ edge in $R_i$ by $e_i$ for $i=1,2$.
Then $\psi_i$ lives in $H_{e_i}$ (as defined above) which is a component of $G-G'$ since $\cT$ is upwards connected.
Since $\psi_1$ and $\psi_2$ live in different components of $G-S$, they also live in different components of $G-G'$.
It follows that $H_{e_1}\neq\ H_{e_2}$.
Therefore $e_1\neq e_2$, $R_1\neq R_2$, and thus $f(\psi_1)\neq f(\psi_2)$.

Next, for the proof that $f$ is onto, for each end $\omega$ of $T$ we find an end $\psi\in\Psi$ such that $f_\script{T}(\psi)=\omega$.
Let $R=re_0v_1e_1v_2e_2\ldots$ be the $\omega$-ray in $T$ starting in the root of $T$.
We have $\bigcap_{i\in\NN} H_{e_i}=\emptyset$ because each $H_{e_i}$ contains only vertices from parts $V_t$ such the distance of $t$ to the root of $T$ is greater than $i$.
In particular, for every finite subset $S$ of $V(G)$ there is a minimal integer $i$ such that $S\cap H_{e_i}=\emptyset$.
Since $H_{e_i}$ is connected and non-empty, there is a unique component $d(S)$ of $G-S$ with $H_{e_i}\subseteq d(S)$.
The function $d$ defines a direction on $G$ because the components $H_{e_0}\supseteq H_{e_1}\supseteq\ldots$ are nested and non-empty. 
By Theorem~\ref{thm_direction}, there is an end $\psi$ of $G$ such that $C_G(S,\psi)=d(S)$ for every finite subset $S\subseteq V(G)$.
In particular, we have $d(X_{e_i})=H_{e_i}$ for the separator $X_{e_i}$ corresponding to $e_i$, and hence $\psi$ lives in $H_{e_i}$ for all $i\in\NN$.
Consequently, $\psi$ lies in the boundary of $\script{T}$ and $f(\psi)=\omega$.

We now argue that $f$ is continuous (this part of the argument works for any tree-decomposition displaying $\Psi$ and doesn't yet require upwards connectedness). Indeed, let $\psi \in \Psi$ and $f(\psi) = \omega \in \Omega(T)$. For continuity, consider an arbitrary basic open neighbourhood $\Omega_T(T',\omega)$ of $\omega \in \Omega(T)$. Since $T$ is a tree, there is a unique $C_T(T',\omega)$--$T'$ edge $e=tt'$. Then $X_e = V_t \cap V_{t'}$ is finite since $\mathcal{T}$ had finite adhesion. Now $C_G(X_e,\psi)$ lies completely on one side of the separation $(A_e,B_e)$, and so all ends in $C_G(X_e,\psi)$ orient $e$ towards $\omega$, showing that $f[\Omega_G(X_e,\psi)] \cap \Psi \subseteq \Omega_T(T',\omega)$ as desired.

Finally, we show that $f^{-1}$ is continuous (this part fails without upwards connectedness, cf.~Figure~\ref{fig:my_labelBLA2}). Let $f^{-1}(\omega) = \psi \in \Psi$ as before and consider a  basic open neighbourhood $\Omega_G(S,\psi) \cap \Psi$ of $\psi \in \Psi$. Let $T'$ be a finite subtree of $T$ which contains the root of $T$ and such that $S\subseteq\bigcup_{t\in V(T')}V_t$. Since $\psi$ orients the unique $C_T(T',\omega)$--$T'$ edge $e$ towards $\omega$ and $H_e$ is connected, it follows that $H_e = C_G(X_e,\psi) \subseteq C_G(S,\psi)$. Thus, all ends that orient $e$ towards $\omega$ live in $C_G(S,\psi)$, giving 
$f^{-1}[\Omega_T(T',\omega)] \subseteq \Omega_{G}(S,\psi) \cap \Psi$ as desired.
\end{proof}

\section{Tree-decompositions displaying all ends}
In this section we answer the question which graphs have a tree-decomposition displaying all ends. It turns out that those are exactly the graphs with a normal spanning tree. A characterisation of those graphs by forbidden minors can be found in \cite{pitz2021proof}.

\begin{thm}
\label{thm_Psi=Omega}
The following are equivalent for any connected graph $G$:
\begin{enumerate}
    \item \label{item_Psi=Omega0} There is an upwards connected tree-decomposition of finite adhesion with connected parts that homeomorphically displays $\Omega(G)$.
    \item \label{item_Psi=Omegai} There is a tree-decomposition of finite adhesion displaying $\Omega(G)$.
    \item \label{item_Psi=Omegai'} There is a tree-decomposition of finite adhesion with boundary $\Omega(G)$.
     \item \label{item_Psi=Omegaii} $|G|$ is (completely)  metrizable.
    \item \label{item_Psi=Omegaiii} $\Omega(G)$ is $G_\delta$ in $|G|$.
    \item \label{item_Psi=Omegaiv} $G$ has a normal spanning tree.
\end{enumerate}
\end{thm}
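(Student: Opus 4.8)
The plan is to prove the cyclic chain $(\ref{item_Psi=Omegaiv}) \Rightarrow (\ref{item_Psi=Omega0}) \Rightarrow (\ref{item_Psi=Omegai}) \Rightarrow (\ref{item_Psi=Omegai'}) \Rightarrow (\ref{item_Psi=Omegaiii}) \Rightarrow (\ref{item_Psi=Omegaiv})$, and to fold in metrizability by separately establishing $(\ref{item_Psi=Omegaiv}) \Rightarrow (\ref{item_Psi=Omegaii}) \Rightarrow (\ref{item_Psi=Omegaiii})$. The implications $(\ref{item_Psi=Omega0}) \Rightarrow (\ref{item_Psi=Omegai}) \Rightarrow (\ref{item_Psi=Omegai'})$ are immediate from the definitions, since a homeomorphic display is a display and a display has boundary $\Omega(G)$. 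For $(\ref{item_Psi=Omegaii}) \Rightarrow (\ref{item_Psi=Omegaiii})$ I would first note that the graph points of $|G|$ form an open set, because each has a neighbourhood inside the metric length-space $G$; hence $\Omega(G) = |G| \setminus G$ is closed, and in a metrizable space every closed set is $G_\delta$.

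For $(\ref{item_Psi=Omegaiv}) \Rightarrow (\ref{item_Psi=Omega0})$, given a normal spanning tree $T$ of $G$ rooted at $r$, I would take the decomposition tree $T$ with parts $V_t := \lceil t \rceil$. Conditions (T1)--(T3) follow from normality (every edge joins comparable vertices, and $\lceil t_1\rceil\cap\lceil t_3\rceil=\lceil t_1\wedge t_3\rceil$). For an edge $e = st$ with $s<t$ one computes $X_e = \lceil s\rceil$, a finite chain, so the adhesion is finite; and $A_t\setminus A_s = \lfloor t\rfloor$, whose induced subgraph contains the connected subtree $T[\lfloor t\rfloor]$ and is therefore connected and non-empty. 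Thus the decomposition is upwards connected with connected parts, and Lemma~\ref{lem_upwardsconndisplay} shows it homeomorphically displays its boundary. Since normal spanning trees are end-faithful, each end of $G$ contains a unique normal ray whose induced orientation of $T$ points to the corresponding end of $T$; so no end lives in a (finite) part and the boundary is exactly $\Omega(G)$.

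The heart of the argument is $(\ref{item_Psi=Omegai'}) \Rightarrow (\ref{item_Psi=Omegaiii})$. Given a finite-adhesion tree-decomposition $\script{T}=(T,\cV)$ rooted at $r$ with boundary $\Omega(G)$, I would fix $\varepsilon = 1/2$ and for each $n$ set $O_n := \bigcup \hat C_\varepsilon(X_e, C(X_e,B_e))$, the union over all edges $e$ of $T$ at distance $n$ from $r$, where $B_e$ denotes the side of $e$ away from $r$. Each $O_n$ is open, and every end lies in all $O_n$: as the boundary is $\Omega(G)$, each end corresponds to an end of $T$, so its orientation crosses a level-$n$ edge and it lives beyond it. The crux is that $\bigcap_n O_n$ contains no graph point. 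For a vertex $v$, the set $T_v=\{t : v\in V_t\}$ is a subtree by (T3) with a unique minimal node, say at level $m$; then for every $n>m$ and every level-$n$ edge $e$ one has $v\notin B_e$ or $v\in X_e$, because a node of $T_v$ whose parent lies outside $T_v$ can only be its minimal node. Hence $v$ avoids the far component, and since $\varepsilon<1$ it avoids $O_n$; a short distance estimate using $\varepsilon<1/2$ extends this to interior edge points. So $\bigcap_n O_n = \Omega(G)$, which is $(\ref{item_Psi=Omegaiii})$.

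For $(\ref{item_Psi=Omegaiii}) \Rightarrow (\ref{item_Psi=Omegaiv})$ I would use that a representation $\Omega(G)=\bigcap_n O_n$ with $O_n$ open yields $V(G)=\bigcup_n\bigl(V\cap(|G|\setminus O_n)\bigr)$, where each $|G|\setminus O_n$ is closed and disjoint from $\Omega(G)$, hence each summand is dispersed (cf.\ the closedness argument behind Lemma~\ref{lem_Fsigma}); then Jung's classical theorem \cite{jung1969wurzelbaume} that a connected graph whose vertex set is a countable union of dispersed sets has a normal spanning tree finishes the step. Finally $(\ref{item_Psi=Omegaiv}) \Rightarrow (\ref{item_Psi=Omegaii})$: a normal spanning tree yields a complete metric on $|G|$, e.g.\ by declaring two ends to be at distance $2^{-n}$ when the highest common vertex of their normal rays sits at level $n$ and extending over the tree-levels, which one checks induces \textsc{MTop}; alternatively one cites the known equivalence of metrizability of $|G|$ with the existence of a normal spanning tree \cite{diestel2006end}. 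I expect the main obstacle to be the point-exclusion step in $(\ref{item_Psi=Omegai'}) \Rightarrow (\ref{item_Psi=Omegaiii})$, since dominated ends force vertices into infinitely many separators, and it is precisely the choice $\varepsilon<1/2$ together with the minimal-node analysis of $T_v$ that keeps such vertices out of $\bigcap_n O_n$; the decisive external input is Jung's theorem, which does the real work of converting a dispersed covering of $V(G)$ into a normal spanning tree.
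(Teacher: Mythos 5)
Your proposal is correct and follows essentially the same route as the paper: the trivial implications, the down-closure parts $V_t = \lceil t\rceil$ of a normal spanning tree combined with Lemma~\ref{lem_upwardsconndisplay} for $(\ref{item_Psi=Omegaiv})\Rightarrow(\ref{item_Psi=Omega0})$, Jung's theorem on dispersed coverings for $(\ref{item_Psi=Omegaiii})\Rightarrow(\ref{item_Psi=Omegaiv})$, and Diestel's result for metrizability. The only (harmless) variations are that for $(\ref{item_Psi=Omegai'})\Rightarrow(\ref{item_Psi=Omegaiii})$ the paper argues dually by showing each $G[D_n]$ with $D_n=\bigcup_{t\in T^{\leq n}}V_t$ is closed, so that $G$ is $F_\sigma$ (your construction of the open sets $O_n$ is exactly what the paper does in the corresponding step of Theorem~\ref{thm:display_sets}, and your minimal-node analysis of $T_v$ is sound), and that you prove $(\ref{item_Psi=Omegaii})\Rightarrow(\ref{item_Psi=Omegaiii})$ directly from closedness of $\Omega(G)$ rather than routing through the cited equivalence $(\ref{item_Psi=Omegaii})\Leftrightarrow(\ref{item_Psi=Omegaiv})$.
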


\begin{proof}
The equivalence $(\ref{item_Psi=Omegaiii})\Leftrightarrow (\ref{item_Psi=Omegaiv})$ is a well-known result by Jung characterising the existence of normal spanning trees \cite{jung1969wurzelbaume}. In Jung's language, a connected graph has a normal spanning tree if and only if $V(G)$ is a countable union of dispersed sets; since dispersed sets are precisely the sets of vertices which are closed in $|G|$, this is equivalent to  $V=V(G)$ being $F_\sigma$ in $|G|$. 
By Lemma~\ref{lem_Fsigma}, this is equivalent to $G$ being $F_\sigma$ in $|G|$, which by taking complements is the same as $\Omega(G)$ being $G_\delta$ in $|G|$. 

The equivalence $(\ref{item_Psi=Omegaii})\Leftrightarrow (\ref{item_Psi=Omegaiv})$ is due to Diestel \cite{diestel2006end}.\footnote{For $(\ref{item_Psi=Omegaiv})\Rightarrow (\ref{item_Psi=Omegaii})$, Diestel only verifies that his metric is topologically compatible; but it not hard to see that his metric is in fact complete. See also Theorem~\ref{thm:display_sets}.}
The implications $(\ref{item_Psi=Omega0})\Rightarrow (\ref{item_Psi=Omegai})$ and $(\ref{item_Psi=Omegai})\Rightarrow (\ref{item_Psi=Omegai'})$ are trivial.

For $(\ref{item_Psi=Omegai'}) \Rightarrow (\ref{item_Psi=Omegaiii})$ suppose we have a tree-decomposition $(T,\mathcal{V})$ with root $r$ of finite adhesion with boundary $\Omega(G)$. 
We claim that $G[D_n]$ is closed, where
$$D_n:=\bigcup_{t \in T^{\leq n}} V_{t}.$$
Indeed, for any end $\omega$ of $G$ there is a unique ray $R=t_0t_1t_2\ldots$ starting at the root $t_0 = r$ corresponding to this end. Then $V_{t_n} \cap V_{t_{n+1}}$ is a finite separator that separates $D_n$ from the tails of all $\omega$-rays. Hence no end lives in the closure of $D_n$, so $G[D_n]$ is closed. It follows from property (T1) and (T2) of a tree-decomposition that $G = \bigcup_{n \in \NN} G[D_n]$ is $F_\sigma$, so by taking complements in $|G|$, we see that $\Omega(G)$ is $G_\delta$ in $|G|$.

Lastly, we show $(\ref{item_Psi=Omegaiv})\Rightarrow (\ref{item_Psi=Omega0})$. Something  similar  has been done in \cite{diestel1994depth}. Assume that $G$ has a normal spanning tree $T$ with root $r$. For every vertex $t$ of $T$, we define $V_t := \lceil t \rceil$ and show that $\cT:=(T,(V_t)_{t \in T})$ is a tree-decomposition of $G$ of finite adhesion that homeomorphically displays all its ends.
Since $T$ is normal, the end vertices of any edge $vw$ of $G$ are comparable in the tree order. If say $v < w$, then $e$ belongs to the part $V_w$ per definition, giving (T2). Further, if a vertex lies in two parts $V_v$ and $V_w$, it lies in $\lceil v \rceil \cap \lceil w \rceil$ and hence in all $V_t$ for vertices $t$ on the unique $v$--$w$ path in $T$. Thus we get property (T3), so we have a tree-decomposition.
It is clear that all parts are connected and since all parts are finite, also all adhesion sets are finite. Finally, $\cT$ is clearly upwards connected.
Therefore it follows from Lemma \ref{lem_upwardsconndisplay} that $\cT$ homeomorphically displays its boundary, which contains all ends of $G$ since all parts are finite.
\end{proof}

\section{Envelopes}

Let $G$ be a connected graph. An \emph{envelope} for a set of vertices $U \subseteq V(G)$ is a set of vertices $U^* \supseteq U$ of finite adhesion (i.e.\ such that every component of $G-U^*$ has only finitely many neighbours in $U^*$) with $\partial U^* = \partial U$. In \cite[Theorem~3.2]{kurkofkapitz_rep} it is proven that every set of vertices in a connected graph admits a connected envelope.

In the following, however, we need a stronger notion of an envelope that works for a set $X \subseteq V(G) \cup \Omega(G)$ of vertices and ends (and in particular, for a set $X$ consisting of ends only): An \emph{envelope} for such a set $X \subseteq V(G) \cup \Omega(G)$ is a set of vertices $X^* \supseteq X \cap V(G)$ of finite adhesion such that $\partial  X^* = \overline{X} \cap \Omega(G)$, where the closure $\overline{X}$ of $X$ is taken in $|G|$.

\begin{thm}
\label{thm:envelope}
Any set consisting of vertices and ends in a graph $G$ admits an envelope.
\end{thm}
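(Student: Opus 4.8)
The plan is to reduce the general statement about sets $X \subseteq V(G) \cup \Omega(G)$ to the already-established result that sets of vertices admit connected envelopes (from \cite{kurkofkapitz_rep}), using Theorem~\ref{thm:RNT} on rayless normal trees to convert the ends in $X$ into vertices without disturbing the boundary. First I would split $X = X_V \sqcup X_\Omega$ into its vertex part $X_V = X \cap V(G)$ and its end part $X_\Omega = X \cap \Omega(G)$. The difficulty is that $X^*$ must be a set of \emph{vertices} whose boundary is exactly $\overline{X} \cap \Omega(G)$, yet the ends in $X_\Omega$ are not themselves vertices and so cannot simply be thrown into $X^*$; we must instead capture each such end by a suitable set of vertices approximating it. Note that $\overline{X} \cap \Omega(G) = \overline{X_V} \cap \Omega(G) \, \cup\, \overline{X_\Omega}$, since closure commutes with finite unions, and $\overline{X_V} \cap \Omega(G) = \partial X_V$.

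The key step is to handle $X_\Omega$ via a rayless normal tree. Consider the open cover of $\Omega(G)$ given by $\cO = \{\,|G| \setminus \overline{X_\Omega}\,\} \cup \{\,\hat{C}(X,\omega) : \text{basic open}\,\}$; more carefully, I want an open cover witnessing that ends outside $\overline{X_\Omega}$ can be separated off. Apply Theorem~\ref{thm:RNT} to obtain a rayless normal tree $T$ in $G$ such that every component $C$ of $G - T$ has $\partial C \subseteq O$ for some $O \in \cO$, arranging the cover so that any component whose boundary meets $\overline{X_\Omega}$ actually has its boundary contained in a single ``small'' neighbourhood. By Lemma~\ref{lem:extend_RNT}, $T$ has finite adhesion in $G$, and $T$ is end-faithful on its boundary in the sense that $\partial V(T) = \overline{X_\Omega}$ when the cover is chosen to shrink precisely around $\overline{X_\Omega}$. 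The vertex set $V(T)$ then serves as an envelope for $X_\Omega$: it is a set of vertices of finite adhesion with $\partial V(T) = \overline{X_\Omega}$.

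Next I would treat the vertex part. By \cite[Theorem~3.2]{kurkofkapitz_rep} applied to $X_V$, there is a connected envelope $W \supseteq X_V$ of finite adhesion with $\partial W = \partial X_V = \overline{X_V} \cap \Omega(G)$. I then set $X^* := W \cup V(T)$. Since $X^* \supseteq X_V = X \cap V(G)$ by construction, the containment requirement holds. For the boundary, I claim $\partial X^* = \partial W \cup \partial V(T)$: in general $\partial(A \cup B) = \partial A \cup \partial B$ for vertex sets, because an end lies in $\overline{A \cup B}$ iff there is a comb attached to $A \cup B$ with spine in that end, which (by the pigeonhole principle on the infinitely many teeth) forces a comb attached to one of $A$ or $B$. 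Hence $\partial X^* = \partial W \cup \partial V(T) = (\overline{X_V} \cap \Omega(G)) \cup \overline{X_\Omega} = \overline{X} \cap \Omega(G)$, as required.

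The remaining obligation is finite adhesion of $X^* = W \cup V(T)$, and this is the main obstacle. Finite adhesion is not automatically preserved under unions: a component $D$ of $G - X^*$ could in principle see infinitely many vertices of $W \cup V(T)$ even if it sees only finitely many of each separately, since $D$ is a component of the potentially larger separator. I would argue this directly using that both $W$ and $V(T)$ have finite adhesion and that $D$ is contained in some component $D_W$ of $G - W$ and some component $D_T$ of $G - V(T)$; then $N(D) \subseteq N(D_W) \cup N(D_T)$, and both of the latter are finite. The delicate point is verifying $N(D) \subseteq N(D_W) \cup N(D_T)$, which holds because any neighbour of $D$ outside $X^*$ would contradict $D$ being a full component, so every neighbour lies in $W \cup V(T)$ and hence is a neighbour of the component of $G$ minus the respective smaller set containing $D$. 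Once finite adhesion is confirmed, all three envelope conditions are met and the proof is complete.
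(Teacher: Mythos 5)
There is a genuine gap, and it sits exactly where the real work of the theorem lies: your mechanism for capturing the ends in $X_\Omega$ cannot work. A rayless normal tree $T$ always has $\partial V(T) = \emptyset$: normal trees are end-faithful, so $\iota\colon \Omega(T) \to \partial T$ is a bijection, and $\Omega(T) = \emptyset$ because $T$ is rayless. (The paper uses precisely this fact elsewhere, in the construction step $3n+1 \rightsquigarrow 3n+2$ of Lemma~\ref{lem:finite_adhesion_sequence}.) So no choice of open cover in Theorem~\ref{thm:RNT} can ``shrink the tree around $\overline{X_\Omega}$'' to force $\partial V(T) = \overline{X_\Omega}$; the purpose of rayless normal trees in this paper is the opposite one, namely to refine end neighbourhoods without adding anything to the boundary. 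With your $X^* = W \cup V(T)$ you would get $\partial X^* = \partial W = \overline{X_V} \cap \Omega(G)$, which in general misses $\overline{X_\Omega}$ entirely, so $X^*$ is not an envelope for $X$. To put an end $\omega \in \overline{X_\Omega}$ into the boundary of a vertex set you must include vertices converging to $\omega$, i.e.\ (the vertex set of) a ray or comb in $\omega$; the paper does this by fixing a maximal family $\cR$ of pairwise disjoint rays belonging to ends of $\overline{X}$ and setting $X' = V(X) \cup \bigcup_{R \in \cR} V(R)$, then adding the centres of all infinite stars attached to $X'$ to restore finite adhesion. The maximality of $\cR$ is what makes both inclusions of $\partial X^* = \overline{X} \cap \Omega(G)$ and the finite-adhesion argument (via the Star--Comb Lemma) go through, and none of that is replaceable by the rayless-normal-tree machinery.

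Two smaller remarks. Your identity $\overline{X} \cap \Omega(G) = (\overline{X_V} \cap \Omega(G)) \cup \overline{X_\Omega}$ is fine, and your observation that finite adhesion is preserved under finite unions is correct as stated: if $D$ is a component of $G - (W \cup V(T))$ contained in components $D_W$ of $G-W$ and $D_T$ of $G - V(T)$, then indeed $N(D) \subseteq N(D_W) \cup N(D_T)$, both finite. So the union step and the use of the vertex-set envelope theorem for $X_V$ are sound; it is only the treatment of $X_\Omega$ that has to be replaced by the ray-plus-star-centre construction.
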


\begin{proof}
Let $X \subseteq V(G) \cup \Omega(G)$ be a given set of vertices and ends in a graph $G$, and write $V(X):= X \cap V(G)$.
Let $\cR$ be an inclusionwise maximal set of pairwise disjoint rays of ends in $\overline{X}$. Put 
$$X':= V(X) \cup \bigcup_{R \in \cR} V(R)$$
and let $\mathcal{S}$ be the set of all centres of (infinite) stars attached to $X'$. We will show that 
$$X^* := X' \cup \mathcal{S}$$
is an envelope for $X$. 
The verification relies on the following two claims:

\begin{clm}
\label{claim_2}
If $S$ is a finite set of vertices and $C$ is a component of $G-S$ such that $X'\cap C$ is finite, then $X^* \cap C = X' \cap C$.
\end{clm}

Only $X^* \cap C \subseteq X' \cap C$ requires proof. For this consider some $v \in \mathcal{S}$. By definition, $v$ is the centre of an infinite star attached to $X'$. Since $S$ is finite and $X'$ meets $C$ finitely, it follows that $v \notin C$. Hence, $C \cap S = \emptyset$ and so $X^* \cap C = X' \cap C$ as claimed.

\begin{clm}
\label{claim_1}
If $S$ is a finite set of vertices and $C$ is a component of $G-S$ such that $\overline{C} \cap X = \emptyset$, then $ X^* \cap C$ is finite.
\end{clm}

To see the claim, consider some finite set of vertices $S$, and assume that $C$ is a component of $G-S$ such that $\overline{C}$ avoids $X$.
First, we show that $\overline{C} \cap \overline{X} = \emptyset$.
For this, observe that the set $\overline{C} \cup \mathring{E}_{1/2}(S,C)$ is open and disjoint from $X$ and so it is disjoint from $\overline{X}$.
In particular, $\overline{C}$ is disjoint from $\overline{X}$.
Hence every ray $R'\in\cR$ meets $C$ finitely. Furthermore, every ray from $\cR$ which meets $C$ also meets $S$, and since the rays in~$\cR$ are pairwise disjoint, at most $|S|$ rays from $\cR$ meet $C$. So $\bigcup_{R \in \cR} V(R)$ meets $C$ finitely, and hence so does $X'$.
By Claim~\ref{claim_2}, also $X^* \cap C = X' \cap C$ is finite. This establishes the claim.

To see $\partial  X^* = \overline{X} \cap \Omega(G)$, we show both inclusions separately. 
For $\supseteq$ consider any end $\eps \notin \partial X^*$. Then $C(S,\eps) \cap X^* = \emptyset$ for some finite set of vertices $S$. Consider a ray $R$ in $\eps$ that is completely contained in $C(S,\eps)$. Then $R$ is disjoint from any ray in $\cR$. By maximality of $\cR$, this means that $\eps \notin \overline{X}$.

For $\subseteq$ consider any end $\eps \notin \overline{X}$. Then there is a finite set of vertices $S$ such that $\hat{C}(S,\eps)$ avoids $X$. 
By Claim~\ref{claim_1}, also $X^*$ intersects $C(S,\eps)$ finitely, witnessing $\eps \notin \partial X^*$. 

To see that $X^*$ has finite adhesion, suppose for a contradiction that there is a component $C$ of $G-X^*$ with infinite neighbourhood.
Then by a routine application of the Star-Comb Lemma~\ref{lem_starcomb}, we either find a star or a comb attached to~$X^*$ whose centre $v$ or spine $R$ is contained in~$C$. The ray case results in an immediate contradiction as follows: If $\eps$ denotes the end with $R \in \eps$, then the comb attached to $X^*$ with spine $R$ witnesses that $\eps \in \partial X^*$. Since $\partial  X^* = \overline{X} \cap \Omega(G)$ by the earlier observation, we get $R\in \eps \in \overline{X}$. But then the existence of $R$ contradicts the maximality of $\cR$.

In the star case, note that for all finite sets of vertices $S$ disjoint from $v$, the component $C$ of $G-S$ containing $v$ meets $X^*$ infinitely. Then $C$ also meets $X'$ infinitely by Claim~\ref{claim_2}. But then it is straightforward to inductively construct a star with centre $v$ attached to $X'$, violating the maximality of $\mathcal{S}$. The completes the proof that $X^*$ is an envelope for $X$.
\end{proof}

Note that the envelopes constructed in  Theorem~\ref{thm:envelope} are in general neither connected nor end-faithful. But we can  easily obtain both properties with the following construction.

For a given subgraph $H \subseteq G$ of finite adhesion, we define a \emph{torso-extension} $H'\supseteq H$ as follows: First, we make $H$ induced. Then for each component $C$ of $G-H$, let $T_C \subseteq G[C \cup N(C)]$ be a finite tree such that all vertices from $N(C)$ are leaves of $T_C$. We add all these $T_C$ to $H$ to obtain $H'$.

\begin{lemma}
\label{lem_torso}
Let $G$ be connected. Whenever $H \subseteq G$ is a subgraph of finite adhesion, then every torso-extension $H'$ is an end-faithful connected subgraph of $G$ of finite adhesion with $\partial H' = \partial H$.
\end{lemma}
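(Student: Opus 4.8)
The plan is to verify the four asserted properties of a torso-extension $H'$ in turn: connectedness, finite adhesion, the boundary identity $\partial H' = \partial H$, and end-faithfulness. I would first record the basic structural consequence of the construction: after making $H$ induced and attaching, for each component $C$ of $G-H$, a finite tree $T_C \subseteq G[C \cup N(C)]$ whose leaves are exactly the (finitely many, since $H$ has finite adhesion) vertices of $N(C)$, the components of $G - H'$ are precisely the components of $C - V(T_C)$ as $C$ ranges over components of $G-H$. Crucially, each such new component $D$ of $G-H'$ satisfies $N(D) \subseteq V(T_C) \setminus N(C)$ is contained in the finite set $V(T_C)$, so $H'$ again has finite adhesion; this disposes of the adhesion claim immediately.

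For connectedness, I would argue that $G$ is connected and $H$ meets every component $C$ of $G-H$ in $N(C) \neq \emptyset$; the tree $T_C$ connects all of $N(C)$ through $C$, so attaching every $T_C$ links all the (possibly many) components of $H$ via paths through the $C$'s. More carefully, since $G$ is connected, any two vertices of $H$ are joined by a path in $G$; each maximal subpath lying in a component $C$ of $G-H$ has both endpoints in $N(C)$, and these are joined inside $T_C \subseteq H'$; replacing each such excursion by the corresponding $T_C$-path yields an $H'$-path, so $H'$ is connected.

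The main substantive step is the boundary identity $\partial H' = \partial H$, from which end-faithfulness will follow. The inclusion $\partial H \subseteq \partial H'$ is immediate since $H \subseteq H'$. For $\partial H' \subseteq \partial H$, I would take an end $\omega \in \partial H'$ and use the comb characterisation of the boundary (stated in Section~\ref{section:end_spaces}): there is a comb attached to $V(H')$ with spine in $\omega$. The key point is that each teeth of this comb lies either in $V(H)$ or in some $V(T_C)$, and in the latter case I can push the tooth to a nearby vertex of $N(C) \subseteq V(H)$ along a path inside $T_C$ without disturbing the comb's ray or creating repetitions, because each $T_C$ is a \emph{finite} tree attached to $H$ only at $N(C)$, and the finite adhesion of $H$ guarantees that only finitely many teeth can be rerouted through any single $T_C$. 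This converts the comb attached to $V(H')$ into a comb attached to $V(H)$ with the same spine, witnessing $\omega \in \partial H$. The finiteness of each $T_C$ (and of each $N(C)$) is exactly what makes this rerouting harmless, so this is where I expect the only real care to be needed.

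Finally, for end-faithfulness I would argue that the inclusion-induced map $\iota \colon \Omega(H') \to \partial H' = \partial H$ is a bijection. Surjectivity onto $\partial H'$ holds for any subgraph whose boundary is $\partial H'$ once we know every end of the ambient graph lying in $\overline{H'}$ is represented by a ray of $H'$; this follows because $H'$ is connected of finite adhesion, so every end in $\partial H'$ has an $H'$-ray via the comb characterisation applied inside $H'$. For injectivity, suppose two rays of $H'$ are equivalent in $G$ but inequivalent in $H'$; they would be separated in $H'$ by a finite vertex set $S$, yet joined outside $S$ in $G$ by a path, whose excursions into components $C$ of $G-H$ can again be rerouted through the finite trees $T_C$ to produce an $H'$-path avoiding $S$ (enlarging $S$ by the finitely many relevant $V(T_C)$ if necessary), a contradiction. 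Thus $\iota$ is injective, completing the proof that $H'$ is an end-faithful connected subgraph of finite adhesion with $\partial H' = \partial H$.
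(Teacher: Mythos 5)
Your overall skeleton matches the paper's: finite adhesion and connectedness are handled exactly as in the paper (the components of $G-H'$ lie inside components $C$ of $G-H$ and meet $H'$ only in the finite set $V(T_C)$; excursions of $G$-paths into components are replaced by $T_C$-paths), and your injectivity argument for end-faithfulness is a clean variant of the paper's (the paper builds infinitely many disjoint connecting paths in $H'$ by discarding the finitely many members of $\cP$ meeting the components touched by each rerouted path; your ``separate in $H'$, reconnect in $G$, reroute'' version works once the enlargement of $S$ is done as you indicate). However, there are two genuine gaps. First, your proof of $\partial H' \subseteq \partial H$ by rerouting the teeth of a comb attached to $V(H')$ into $N(C)$ does not produce a comb attached to $V(H)$ in general: distinct components $C_i$ of $G-H$ can share neighbours in $H$ (indeed $N(C_i)$ could be the same single vertex $v$ for infinitely many $i$), so the rerouted paths may all terminate at one vertex and degenerate into a subdivided star centred at $v$. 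A vertex of $H$ dominating $\omega$ does \emph{not} witness $\omega \in \partial H$, so the finiteness of each $T_C$ is not ``exactly what makes the rerouting harmless'' --- the real issues are coinciding endpoints across different $T_C$'s and interference of the reroutes with the spine. This can be repaired (if the teeth lie in infinitely many distinct components, the comb paths or the spine must cross the sets $N(C_i) \subseteq V(H)$ infinitely often, which already yields a comb attached to $V(H)$), but that is a different argument from the one you wrote. The paper avoids all of this by proving the contrapositive: if $\omega \notin \partial H$ then $\omega$ lives in a single component $C$ of $G-H$, and since $H' \cap C = V(T_C) \cap C$ is finite, $\omega \notin \partial H'$ either.

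Second, your surjectivity step for end-faithfulness is asserted rather than proved. The claim that every end in $\partial H'$ contains a ray of $H'$ is precisely part of what the lemma must establish, and ``the comb characterisation applied inside $H'$'' does not deliver it: the comb witnessing $\omega \in \partial H'$ lives in $G$, and applying the Star--Comb Lemma inside $H'$ to its set of teeth may return a star, or a comb whose spine belongs to a \emph{different} end of $G$. The paper's proof does real work here: it fixes an $\omega$-ray $R$ in $G$ starting in $H$, uses $\omega \in \partial H$ together with finite adhesion to see that $R$ meets $H$ infinitely often, and then iteratively replaces the excursions of $R$ into components of $G-H$ by paths in the trees $T_C$, obtaining a ray $R' \subseteq H'$ with $R' \cap H \subseteq R \cap H$ infinite, whence $R'$ is equivalent to $R$ and so lies in $\omega$. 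You have all the ingredients for this (connectedness, finite adhesion, the rerouting device), but the step needs to be carried out rather than cited.
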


\begin{proof}
Since inside of each component of $G-H$ we only add a finite subgraph to $H$, also $H'$ has finite adhesion.

By construction, every vertex of $H' \setminus H$ is connected via a finite path in $H'$ to a vertex of $H$. Hence for connectivity of $H'$ it remains to show that there is a path in $H'$ between every two vertices $v,w \in H$.

Since $G$ is connected, there is a $v$--$w$ path $P$ in $G$. We consider $P$ as a sequence of edges between vertices of $H$ and segments inside of components $C$ of $G-H$ together with their end-vertices in $N(C)$. After replacing each of those segments in a component $C$ by a path in $T_C$ between the same end-vertices, we obtain a finite $v$--$w$ walk $P'$ contained in $H'$. So $H'$ is connected.

To see that $\partial H' = \partial H$, only $\subseteq$ requires proof. If $\omega \notin \partial H$, then $\omega$ lives in a unique component $C$ of $G-H$. Since $H' \cap C$ is finite it follows that $\omega$ also lives in a unique component $C'$ of $G-H'$ with $C' \subseteq C$ and hence $\omega \notin \partial H'$ by finite adhesion of $H'$.

We now argue that $H'$ contains an $\omega$-ray for every end $\omega$ in $\partial H' = \partial H$. Suppose without loss of generality that $H \neq \emptyset$ and fix any $\omega$-ray $R=r_0r_1r_2\ldots$ in $G$ with $r_0 \in V(H)$. By finite adhesion of $H$, the ray $R$ contains infinitely many vertices of $H$.
We will construct a ray $R' \subseteq H'$ that meets $R$ infinitely as follows: If $R \subseteq H'$, there is nothing to do. Otherwise, let $r_{n_0}$ be the first vertex on $R$ outside of $H'$, and consider the component $C_0 \ni r_{n_0}$ of $G-H$. Let $r_{k_0}$ be the last vertex of $R$ in $C_0$. Replace $r_{n_0-1}Rr_{k_0+1}$ by an $r_{n_0-1}$--$r_{k_0+1}$ path $P_0$ in $T_{C_0} \subseteq H'$ and call the  resulting ray $R_1$. Note that $R_1 \cap H \subseteq  R \cap H$. 
Now we iterate the same step for $R_1$ to find a new ray $R_2$ and so on. 
This yields a sequence of rays $R_1,R_2,R_3,\ldots$ with $R_n \cap H \subseteq  R \cap H$ and which agree on larger and larger initial segments contained in $H'$.
The union of these segments is a ray $R' \subseteq H'$ with $R' \cap H \subseteq R \cap H$, so $R'$ is an $\omega$-ray in $H'$ as desired.

To see that $H'$ is end-faithful, it remains to show that any two rays $R_1$ and $R_2$ in $H'$ that are equivalent in $G$ are also equivalent in $H'$. By assumption there is a collection $\cP$ of infinitely many disjoint $R_1$--$R_2$ paths in $G$. We will find infinitely many such paths in $H'$. Let $P$ be an $R_1$--$R_2$ path in $G$ with endvertices $r_1 \in R_1$ and $r_2 \in R_2$.  
As in the second paragraph, we find a $r_1$--$r_2$ walk $P'$ in $H'$. 

Consider the finitely many components of $G-H$ that meet $P'$ and delete from $\mathcal{P}$ all paths that meet one of these components -- by finite adhesion, $\mathcal{P}$ remains infinite. So we can find another $R_1$--$R_2$ path in $H'$ disjoint to the first one. Iterating this construction, we find infinitely many disjoint $R_1$--$R_2$ paths in $H'$, showing that $R_1$ and $R_2$ in are also equivalent in $H'$.
\end{proof}

A result for torsos of parts in tree-sets similar to Lemma~\ref{lem_torso} is proven in \cite[Section~2.6]{Torso}.

\begin{cor}\label{cor:envelope}
Any set consisting of vertices and ends in a connected graph $G$ has a connected, end-faithful envelope.
\end{cor}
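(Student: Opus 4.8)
The plan is to combine the two preceding results in a single bookkeeping step: first invoke Theorem~\ref{thm:envelope} to obtain a (possibly disconnected and non-end-faithful) envelope as a mere set of vertices, and then pass to a torso-extension of the induced subgraph on this set in order to repair both defects simultaneously via Lemma~\ref{lem_torso}.

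Concretely, let $X \subseteq V(G) \cup \Omega(G)$ be given. First I would apply Theorem~\ref{thm:envelope} to obtain an envelope $X^*$, that is, a set of vertices with $X^* \supseteq X \cap V(G)$, of finite adhesion, and with $\partial X^* = \overline{X} \cap \Omega(G)$. Next I would set $H := G[X^*]$. Since $G - H = G - X^*$ and the neighbourhood of each component of $G - X^*$ lies in $X^*$, the subgraph $H$ inherits finite adhesion directly from the set $X^*$; and because $H$ is induced, $\partial H = \partial X^* = \overline{X} \cap \Omega(G)$.

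Now I would invoke Lemma~\ref{lem_torso}: any torso-extension $H'$ of $H$ is a connected, end-faithful subgraph of $G$ of finite adhesion with $\partial H' = \partial H$. It then only remains to observe that $H'$ already is the desired object. Its vertex set satisfies $V(H') \supseteq V(H) = X^* \supseteq X \cap V(G)$, it has finite adhesion (as $H'$ does), and $\partial H' = \partial H = \overline{X} \cap \Omega(G)$; hence $V(H')$ is an envelope for $X$, and by the lemma $H'$ is moreover connected and end-faithful, which is exactly what is claimed.

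Since the statement is an assembly of the two cited results, there is no genuine obstacle. The only point requiring a little care is the translation of the hypothesis \emph{finite adhesion} between its two guises: that $X^*$ being a \emph{set} of vertices of finite adhesion is literally the same assertion as $G[X^*]$ being a \emph{subgraph} of finite adhesion, which is precisely what licenses the application of Lemma~\ref{lem_torso}. One should likewise note that passing to the induced subgraph (the first step in forming a torso-extension) leaves the boundary unchanged, so that no end is gained or lost before Lemma~\ref{lem_torso} then preserves it.
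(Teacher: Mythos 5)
Your proposal is correct and matches the paper's intended argument: the paper states the corollary without proof precisely because it is the immediate combination of Theorem~\ref{thm:envelope} and Lemma~\ref{lem_torso}, exactly as you assemble it. Your explicit care about the two guises of ``finite adhesion'' and the invariance of the boundary under passing to $G[X^*]$ is the only content beyond the two cited results, and it is handled correctly.
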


Whenever we refer to \emph{the envelope} of $X$ inside a connected graph $G$, we assume that we fixed one possible end-faithful connected choice and call it $\cE_G(X)$.

\section{From topology to tree-decompositions}

In this section we employ the envelope technique in order to construct a tree-decomposition of finite adhesion adapted to some prescribed topological information. Roughly, given an infinite graph $G$ and an increasing sequence of closed subsets $X_0\subseteq X_1\subseteq X_2\subseteq\dots$ in $|G|$ such that $V(G) \subseteq \bigcup_{n \in \NN} X_n$, we construct a tree-decomposition $\mathcal{T}= (T,\mathcal{V})$ of finite adhesion such that precisely the ends of $X_n$ live in parts indexed by the first $n$ levels of $T$, and all other ends get displayed.\footnote{In the actual proof, we arrange for technical reasons that the ends of $X_n$ live precisely in parts indexed by the first $3n+3$ levels of $T$.}

However, we also want a device that ensures that all ends of some prescribed subcollection $\Delta$ of ends in $\bigcup_{n \in \NN} X_n$ live in pairwise distinct parts of $\mathcal{T}$. It turns out that this can be achieved provided that each $\Delta_n := \Delta \cap (X_{n} \setminus X_{n-1})$ is a discrete set.

\begin{lemma}\label{lem:finite_adhesion_sequence}
Let $G$ be a graph and $\Xi\subseteq\Omega(G)$. Suppose that there is a sequence $X_0\subseteq X_1\subseteq X_2\subseteq\dots$ of subsets of $V(G)\cup\Xi$ that are closed in $|G|$ with $V(G)\cup\Xi=\bigcup_{n\in\NN} X_n$. Denote $\Xi_n:=X_n\cap\Omega(G)$ and let $\Delta_n$ be a discrete subset of $\Xi_n\setminus\bigcup_{i<n}\Xi_i$ for all $n\in\NN$. Then there exists a sequence of induced subgraphs of finite adhesion $G_0\subseteq G_1\subseteq G_2\subseteq\dots$ of $G$ such that the following holds for all $n\in\NN$:
\begin{enumerate}[label=(\roman*)]
    \item\label{itemG:connected_parts} For every component $C$ of $G-G_n$, the set $(C\cap G_{n+1})\cup N(C)$ is connected in $G$;
    \item\label{itemG:ends2} $\partial G_{3n+2}\subseteq\Xi_n\setminus\Delta_n$;
    \item\label{itemG:delta_distinguished} for every component $C$ of $G-G_{3n+2}$, there is at most one end from $\Delta_n$ contained in $\partial C$;
    \item\label{itemG:contains_vertices} $X_n\cap V(G)\subseteq V(G_{3n+3})$;
    \item\label{itemG:ends3} $\partial G_{3n+3}=\Xi_n$.
\end{enumerate}
\end{lemma}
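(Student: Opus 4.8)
The plan is to construct the sequence $G_0 \subseteq G_1 \subseteq G_2 \subseteq \cdots$ recursively in blocks of three steps, using the envelope machinery from Corollary~\ref{cor:envelope} together with the rayless normal tree technique from Theorem~\ref{thm:RNT} and Lemma~\ref{lem:extend_RNT}. Having built $G_{3n-1}$ (with $\partial G_{3n-1} = \Xi_{n-1}$, matching \ref{itemG:ends3} for the previous block), I would produce $G_{3n}, G_{3n+1}, G_{3n+2}, G_{3n+3}$ so that the three interior properties hold and $\partial G_{3n+3} = \Xi_n$. The three extra steps per block are exactly what buys the technical separation between the "contains enough vertices" requirement \ref{itemG:contains_vertices}, the "boundary is exactly $\Xi_n$" requirement \ref{itemG:ends3}, and the delicate "$\Delta_n$-distinguishing" requirement \ref{itemG:delta_distinguished}; this is why the statement decorates $X_n$ with the index $3n+3$ rather than $n$.

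The key steps within a block, in order, are as follows. First, to handle \ref{itemG:delta_distinguished} I would exploit that $\Delta_n$ is discrete in $\Xi_n$: for each $\delta \in \Delta_n$ there is a finite separator $S_\delta$ such that $\hat C(S_\delta, \delta)$ contains no other end of $\Delta_n$. These neighbourhoods form (part of) an open cover of the relevant end set, to which I would apply Theorem~\ref{thm:RNT} to obtain a rayless normal tree whose components each have boundary inside one such neighbourhood, so each component meets at most one end of $\Delta_n$ in its boundary. Second, to force the boundary down to the required set I would take an envelope $\cE_G(X_n)$ via Corollary~\ref{cor:envelope}; since $\partial \cE_G(X_n) = \overline{X_n} \cap \Omega(G) = \Xi_n$ (using that $X_n$ is closed), this yields a finite-adhesion subgraph with exactly the boundary demanded by \ref{itemG:ends3}. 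To meet \ref{itemG:contains_vertices} I would enlarge by the finitely-many-new-vertices operation of Lemma~\ref{lem:extend_RNT} so that $X_n \cap V(G)$ is absorbed, taking care that this is done before the final step of the block so as not to disturb the boundary. Third, property \ref{itemG:connected_parts} is an upwards-connectedness bookkeeping condition; I would ensure it by applying a torso-extension (as in Lemma~\ref{lem_torso}) to each intermediate subgraph, which makes the added structure connected through $N(C)$ without changing the boundary.

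The interleaving of these operations is where the real care is needed, and I expect the main obstacle to be arranging that each operation preserves the properties already secured in the block while simultaneously keeping \emph{all} $G_m$ induced and of finite adhesion, and nested. Concretely, applying Theorem~\ref{thm:RNT} to separate $\Delta_n$ introduces a rayless normal tree whose boundary must then be reconciled with the envelope boundary $\Xi_n$ for \ref{itemG:ends3}; I would therefore apply the envelope step strictly after the $\Delta_n$-separation step and verify that intersecting/unioning these finite-adhesion subgraphs keeps \ref{itemG:delta_distinguished} intact, which holds because refining the tree-decomposition only splits components further and hence cannot merge two distinct $\Delta_n$-ends into one boundary. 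A secondary subtlety is property \ref{itemG:ends2}, requiring $\partial G_{3n+2} \subseteq \Xi_n \setminus \Delta_n$: the intermediate graph at step $3n+2$ must have already pushed the $\Delta_n$-ends off its boundary (they only reappear at level $3n+3$ once vertices are added), so I would arrange the $\Delta_n$-separating normal tree to sit at the $3n+2$ stage and only re-envelope to recapture $\Xi_n$ in full at the $3n+3$ stage. Verifying that these three passes can be scheduled consistently across all blocks, with nestedness $G_m \subseteq G_{m+1}$ maintained throughout, is the crux of the argument.
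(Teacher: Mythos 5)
Your overall architecture is the same as the paper's: blocks of three steps, envelopes (Corollary~\ref{cor:envelope}) for controlling boundaries, Theorem~\ref{thm:RNT} plus Lemma~\ref{lem:extend_RNT} for the $\Delta_n$-separation, and connectivity via connected envelopes/torso-extensions, all carried out component-by-component. But there is one genuine gap, and it sits exactly at the step you flag as the source of property \ref{itemG:delta_distinguished}. You propose to build the open cover for Theorem~\ref{thm:RNT} out of neighbourhoods $\hat C(S_\delta,\delta)$, one per $\delta\in\Delta_n$, each meeting $\Delta_n$ only in $\delta$, and then complete this to a full open cover of the relevant end space. This completion is impossible in general: $\Delta_n$ is only assumed \emph{discrete}, not closed, so it may have a limit point $\omega\in\Xi_n\setminus\Delta_n$. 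Such an $\omega$ lies in none of the sets $\hat C(S_\delta,\delta)$ (any such set containing $\omega$ would contain infinitely many points of $\Delta_n$), and \emph{every} open set containing $\omega$ meets $\Delta_n$ infinitely. Hence no open cover in which each element contains at most one end of $\Delta_n$ exists, and Theorem~\ref{thm:RNT} cannot be applied as you describe.

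This is precisely why the paper spends a whole extra step ($3n\rightsquigarrow 3n+1$) before the rayless-normal-tree step: in each component $D$ it first takes the envelope of $\bigl(\p{\Xi_n\cap\partial D}\setminus\bigcup\cO_D\bigr)\cup N(D)$, where $\cO_D$ is a family of open sets isolating the points of $\Delta_n\cap\partial D$. This absorbs all of $\Xi_n\cap\partial D$ \emph{outside} those isolating neighbourhoods --- in particular all limit points of $\Delta_n$ --- into $G_{3n+1}$. Only then does every component $C$ of $G-G_{3n+1}$ satisfy $\partial C\cap\Xi_n\subseteq\bigcup\cO_D$, so that $\{O\cap\partial C: O\in\cO_D\}\cup\{\partial C\setminus\Xi_n\}$ is a legitimate open cover (the last set being open because $\Xi_n$ is closed) with each element meeting $\Delta_n$ at most once. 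Your schedule has only two substantive operations per block (normal tree at stage $3n+2$, envelope at stage $3n+3$) and leaves stage $3n+1$ essentially idle; the missing idea is this preparatory envelope that shrinks the component boundaries to a neighbourhood of $\Delta_n$ union the complement of $\Xi_n$ before the cover is formed. A smaller point: the final envelope should be taken per component, as $\cE_{\tilde D}\p{\p{X_n\cap\overline D}\cup N(D)}$, rather than as one global $\cE_G(X_n)$, since a global envelope need not contain $G_{3n+2}$ and unioning it on afterwards can destroy finite adhesion and nestedness.
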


\begin{proof}
Set $G_0:=\emptyset$. We will inductively define subgraphs $G_0,G_1,\dots$ of $G$ all of finite adhesion so that $\ref{itemG:connected_parts}-\ref{itemG:ends3}$ are satisfied. 

Every step of the construction follows the same general pattern: To construct $G_{n+1}$ from $G_n$ consider the current set $\cC_n$ of components of $G-G_n$. For every $D\in\cC_n$ we consider the subgraph $\Tilde{D}:=G[D\cup N(D)]$ of $G$.
Each time we will define a set of vertices $V_D\subseteq V(\Tilde{D})$ of finite adhesion in $\Tilde{D}$ containing $N(D)$. Then also $G_{n+1}:=G_n\cup\bigcup_{D\in\cC_n}V_D$ has finite adhesion in $G$ since any component $C$ of $G-G_{n+1}$ is also a component of $\Tilde{D}-V_D$ for some $D\in\cC_n$.
Furthermore, we will make sure that $V_D$ is connected so that $\ref{itemG:connected_parts}$ is satisfied.

Next, we make two observations concerning the end space of $\Tilde{D}$, which both follow from the fact that $N(D)$ is finite:
Firstly, we have $\partial D=\partial\Tilde{D}$ in $G$, and secondly, the inclusion map $\iota$ as mentioned in Section \ref{section:end_spaces} is a homeomorphism from $\Omega(\Tilde{D})$ to $\partial D\subseteq|G|$.
Via this homeomorphism, we will in the following identify the spaces $\Omega(\Tilde{D})$ and $\partial D\subseteq|G|$.

Now for the actual construction of the sequence $G_0,G_1,\dots$, we proceed in steps of three. Suppose that $G_{3n}$ has already been defined. We demonstrate how to recursively construct
$$G_{3n} \rightsquigarrow G_{3n+1} \rightsquigarrow G_{3n+2} \rightsquigarrow G_{3n+3} = G_{3(n+1)}$$ in order to satisfy $\ref{itemG:connected_parts}-\ref{itemG:ends3}$ for the three indices $3n+1$, $3n+2$ and $3n+3$.

\textbf{1. Step $3n \rightsquigarrow 3n+1$.}\\
Let $D$ be any component from $\cC_{3n}$. Since $\Delta_n$ is discrete in $|G|$, also $\Delta_n\cap\partial{D}$ is discrete in $\partial{D}$.
Thus there is a set $\cO_D=\{O_\omega:\omega\in\Delta_n\cap\partial{D}\}$ of open subsets of $\partial{D}$ with $O_\omega\cap\Delta_n=\{\omega\}$ for all $\omega\in\Delta_n\cap\partial{D}$.
Applying Corollary~\ref{cor:envelope} we consider the envelope
\[V_D:=\cE_{\Tilde{D}}(((\Xi_n\cap\partial D)\setminus\bigcup\cO_D)\cup N(D))\]
which is a connected vertex set of finite adhesion in $\Tilde{D}$ (cf. Figure \ref{fig:1.step}).

\begin{figure}[h]
\centering
\begin{tikzpicture}

\tikzstyle{Xi_k-1}=[draw,star,color=green,fill=green,minimum size=6pt,inner sep=0pt]
\tikzstyle{Xi_k}=[draw,regular polygon, regular polygon sides=4,color=blue,fill=blue,minimum size=6pt,inner sep=0pt]
\tikzstyle{Delta_k}=[draw,regular polygon, regular polygon sides=3,color=red,fill=red,minimum size=6pt,inner sep=0pt]
\tikzstyle{other}=[draw,circle,color=black,fill=black,minimum size=4pt,inner sep=0pt]
\tikzstyle{T}=[draw,circle,color=black,fill=black,minimum size=2.5pt,inner sep=0pt]
\tikzstyle{O}=[draw,very thick,black!30]

\draw[thick] (-4,0) .. controls (-1.5,1.3) and (1.5,1.3) .. (4,0);
\node at (2.5,0.2){$G_{3n}$};

\draw[thick] (-4,5) .. controls (-3,-0.2) and (3,-0.2) .. (4,5);
\node[rotate=60] at (3,3.2){$D\in\cC_{3n}$};

\node[Xi_k] at (-0.7,1.7) {};
\node[Xi_k] at (0,1.4) {};
\node[Xi_k] at (0.7,1.7) {};

\node[Xi_k-1] at (-2.3,0.1) {};
\node[Xi_k-1] at (-1.5,0.4) {};
\node[Xi_k-1] at (-0.6,0) {};
\node[Xi_k-1] at (0.7,0.1) {};
\node[Xi_k-1] at (1.6,0.3) {};

\node[Xi_k] at (-1.3,3.2) {};
\node[Delta_k] at (-1.1,3.8) {};
\node[Xi_k] at (-0.1,3.3) {};
\node[Delta_k] at (0.3,3.8) {};
\node[Xi_k] at (0.7,3.3) {};
\node[other] at (0,4.6) {};
\node[other] at (0.8,4.4) {};
\node[other] at (1.7,3.8) {};
\node[other] at (2.5,3.8) {};
\node[other] at (1.9,4.4) {};

\node[Delta_k] at (-2.2,2.7) {};
\node[Xi_k] at (-2.8,3.1) {};
\node[other] at (-2.3,3.5) {};
\node[other] at (-2.9,3.7) {};

\draw[O] (0.3,3.9) ellipse (23pt and 35pt);
\draw[O,rotate around={75:(-0.8,3.4)}] (-0.8,3.4) ellipse (20pt and 27pt);
\draw[O,rotate around={35:(-2.5,3.2)}] (-2.5,3.2) ellipse (16pt and 27pt);
\node[black!40] at (-1.9,4.2){$\cO_D$};

\node[draw,align=left] at (0,-2) {\,\,\,\,\,\,\,ends from $\Xi_{n-1}$\\ \,\,\,\,\,\,\,ends from $\Delta_n$\\ \,\,\,\,\,\,\,ends from $\Xi_n\setminus (\Xi_{n-1}\cup\Delta_n)$\\ \,\,\,\,\,\,\,ends from $\Omega(G)\setminus\Xi_n$};
\node[Xi_k-1] at (-2.38,-1.17) {};
\node[Delta_k] at (-2.38,-1.73) {};
\node[Xi_k] at (-2.38,-2.27) {};
\node[other] at (-2.38,-2.81) {};

\draw[dashed] (0,1.5) ellipse (30pt and 24pt);
\node at (0,1.9){$V_D$};

\end{tikzpicture}
\caption{Construction step $3n \rightsquigarrow 3n+1$.}
\label{fig:1.step}
\end{figure}
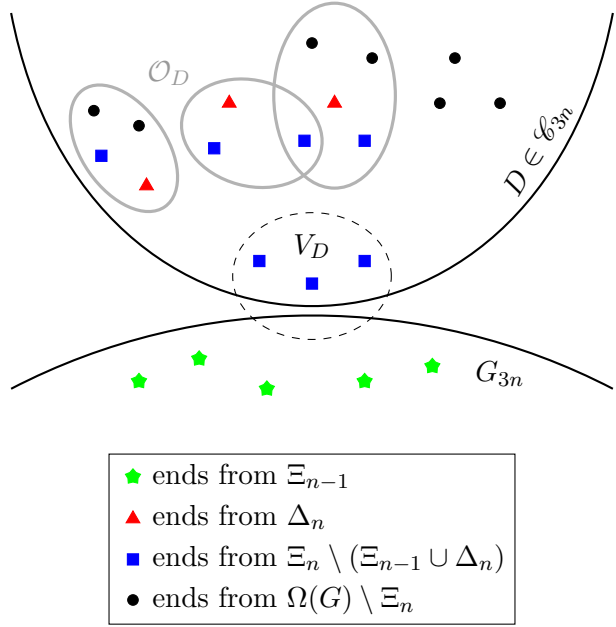

We now determine which ends are contained in $\partial V_D$.
Since both $X_n$ and $\Omega(G)$ are closed in $|G|$, also $\Xi_n=X_n\cap\Omega(G)$ is closed in $|G|$.
Hence $(\Xi_n\cap\partial D)\setminus\bigcup\cO_D$ is closed in the subspace $\partial D$ of $|G|$.
Since $N(D)$ is finite and therefore does not have any ends in its closure, it follows from the definition of an envelope and our identification of $\Omega(\Tilde{D})$ with $\partial D$ that
\[\partial V_D=\overline{(((\Xi_n\cap\partial D)\setminus\bigcup\cO_D)\cup N(D))}\cap\partial D=\overline{(\Xi_n\cap\partial D)\setminus\bigcup\cO_D}=(\Xi_n\cap\partial D)\setminus\bigcup\cO_D.\]

Hence by \ref{itemG:ends3} for $G_{3n}$, the graph $G_{3n+1}=G_{3n}\cup\bigcup_{D\in\cC_{3n}}V_D$ satisfies
\begin{enumerate}[label=(\roman*)]\setcounter{enumi}{5}
\item\label{itemG:ends1} $\partial G_{3n+1}\subseteq\Xi_n\setminus\Delta_n$.
\end{enumerate}
Next, we show that
\begin{enumerate}[label=(\roman*),resume]
\item\label{itemG:open_cover} for every component $C$ of $G-G_{3n+1}$, there is an open cover $\cO$ of $\partial C$ such that each set from $\cO$ contains at most one end from $\Delta_n$.
\end{enumerate}
Let $C$ be a component of $G-G_{3n+1}$ and $D'$ the component of $G-G_{3n}$ with $C\subseteq D'$. We show that $\ref{itemG:open_cover}$ is fulfilled with
\[\cO:=\{O\cap\partial C:O\in\cO_{D'}\}\cup\{\partial C\setminus\Xi_n\}.\]
Clearly, all sets in $\cO$ are open in $\partial C$ and contain at most one end from $\Delta_n$.
For the proof that $\partial C\subseteq\bigcup\cO$, we observe that $C$ and $V_{D'}$ are disjoint and the neighbourhood of $C$ is finite.
Therefore, $\partial C$ and $\partial V_{D'}$ are disjoint.
Since $\partial V_{D'}=(\Xi_n\cap\partial D')\setminus\bigcup\cO_{D'}$, we have $\partial C\cap\Xi_n\subseteq\bigcup\cO_{D'}$ and therefore $\partial C\subseteq\bigcup\cO$.

\textbf{2. Step $3n+1 \rightsquigarrow 3n+2$.}\\
Let $D$ be any component from $\cC_{3n+1}$. By $\ref{itemG:open_cover}$ there exists an open cover $\cO$ of $\partial D$ such that each set from $\cO$ contains at most one end from $\Delta_n$ (cf. Figure \ref{fig:2.step}).
Then by Theorem \ref{thm:RNT}, there is a rayless normal tree $T$ in $\Tilde{D}$ such that for every component $C'$ of $\Tilde{D}-T$ there is a set $O\in\cO$ with $\partial C'\subseteq O$.
By Lemma \ref{lem:extend_RNT} there exists a rayless normal tree $T^*$ in $\Tilde{D}$ such that $V(T)\cup N(D)\subseteq V(T^*)$.
We define $V_D:=V(T^*)$.
Then every component $C$ of $G-V_D$ is contained in a component $C'$ of $G-T$ and thus there is a set $O\in\cO$ with $\partial C\subseteq O$.
Then by $\ref{itemG:open_cover}$, $C$ contains at most one end from $\Delta_n$.
Hence $G_{3n+2}=G_{3n+1}\cup\bigcup_{D\in\cC_{3n+1}}V_D$ satisfies $\ref{itemG:delta_distinguished}$.
Furthermore, $T^*$ has finite adhesion in $\Tilde{D}$ by Lemma~\ref{lem:extend_RNT}.
Finally, normal trees are end-faithful by \cite[Lemma~8.2.3]{diestel2015book}, so from the fact that $T^*$ is rayless it follows that $\partial T^*=\emptyset$.
Therefore $\partial G_{3n+2}=\partial G_{3n+1}$ and $\ref{itemG:ends2}$ is a consequence of $\ref{itemG:ends1}$.

\begin{figure}[h]
\centering
\begin{tikzpicture}

\tikzstyle{Xi_k-1}=[draw,star,color=green,fill=green,minimum size=6pt,inner sep=0pt]
\tikzstyle{Xi_k}=[draw,regular polygon, regular polygon sides=4,color=blue,fill=blue,minimum size=6pt,inner sep=0pt]
\tikzstyle{Delta_k}=[draw,regular polygon, regular polygon sides=3,color=red,fill=red,minimum size=6pt,inner sep=0pt]
\tikzstyle{other}=[draw,circle,color=black,fill=black,minimum size=4pt,inner sep=0pt]
\tikzstyle{T}=[draw,circle,color=black,fill=black,minimum size=2.5pt,inner sep=0pt]
\tikzstyle{O}=[draw,very thick,black!30]

\draw[thick] (-4,0) .. controls (-1.5,1.3) and (1.5,1.3) .. (4,0);
\node at (2.5,0.2){$G_{3n+1}$};

\draw[thick] (-4,5) .. controls (-3,-0.2) and (3,-0.2) .. (4,5);
\draw (-1,5) .. controls (-1,1.5) and (1,1.5) .. (1,5);
\draw (1.3,5) .. controls (0.5,1.3) and (2.2,1.3) .. (3.3,5);
\draw (-1.3,5) .. controls (-0.5,1.3) and (-2.2,1.3) .. (-3.3,5);
\node[rotate=60] at (3,3.2){$D\in\cC_{3n+1}$};
\draw[thick,rotate=25] (-4,3.7) .. controls (-4,1.3) and (-2,1.3) .. (-2,3.7);

\node[Xi_k] at (-0.7,0.5) {};
\node[Xi_k] at (0,0.2) {};
\node[Xi_k] at (0.7,0.5) {};

\node[Xi_k-1] at (-2.3,0) {};
\node[Xi_k-1] at (-1.5,0.3) {};
\node[Xi_k-1] at (-0.6,-0.1) {};
\node[Xi_k-1] at (0.7,0) {};
\node[Xi_k-1] at (1.6,0.2) {};

\node[Xi_k] at (-1.6,3.2) {};
\node[Delta_k] at (-1.4,3.8) {};
\node[Xi_k] at (-0.4,3.3) {};
\node[Delta_k] at (0,3.8) {};
\node[Xi_k] at (0.4,3.3) {};
\node[other] at (-0.3,4.6) {};
\node[other] at (0.5,4.4) {};
\node[other] at (1.5,3.8) {};
\node[other] at (2.3,3.8) {};https://de.overleaf.com/project/61671a741bd43f87bb979c21
\node[other] at (1.7,4.4) {};

\node[Delta_k] at (-3.6,0.8) {};
\node[Xi_k] at (-4.2,1.2) {};
\node[other] at (-3.7,1.6) {};
\node[other] at (-4.3,1.8) {};

\node at (0,1.8){$T^*=V_D$};
\node[T] (a) at (0.1,1.3) {};
\node[T] (b) at (-0.7,1.4) {};
\node[T] (c) at (-1.3,1.6) {};
\node[T] (d) at (-1.1,2) {};
\node[T] (e) at (0.9,1.5) {};
\node[T] (f) at (1.4,1.7) {};
\node[T] (g) at (1,2) {};
\node[T] (h) at (-0.5,0.8) {};
\node[T] (i) at (0,0.8) {};
\node[T] (j) at (0.4,0.7) {};
\draw (a) -- (b);
\draw (b) -- (c);
\draw (b) -- (d);
\draw (a) -- (e);
\draw (e) -- (f);
\draw (e) -- (g);
\draw (b) -- (h);
\draw (a) -- (i);
\draw (a) -- (j);

\draw[O] (0,3.9) ellipse (21pt and 35pt);
\draw[O,rotate around={75:(-1.1,3.4)}] (-1.1,3.4) ellipse (20pt and 27pt);
\draw[O,rotate around={75:(1.1,4.2)}] (1.1,4.2) ellipse (16pt and 48pt);
\node[black!40] at (-2.1,4.1){$\cO$};

\node[draw,align=left] at (0,-2) {\,\,\,\,\,\,\,ends from $\Xi_{n-1}$\\ \,\,\,\,\,\,\,ends from $\Delta_n$\\ \,\,\,\,\,\,\,ends from $\Xi_n\setminus (\Xi_{n-1}\cup\Delta_n)$\\ \,\,\,\,\,\,\,ends from $\Omega(G)\setminus\Xi_n$};
\node[Xi_k-1] at (-2.38,-1.17) {};
\node[Delta_k] at (-2.38,-1.73) {};
\node[Xi_k] at (-2.38,-2.27) {};
\node[other] at (-2.38,-2.81) {};

\end{tikzpicture}
\caption{Construction step $3n+1 \rightsquigarrow 3n+2$.}
\label{fig:2.step}
\end{figure}

\textbf{3. Step $3n+2 \rightsquigarrow 3n+3$.}\\
Again let $D$ be any component from $\cC_{3n+2}$ (the components of $G-G_{3n+2}$). We define
\[V_D:=\cE_{\Tilde{D}}((X_n\cap \overline{D})\cup N(D)).\]
Then it follows from the definition of an envelope that
\[X_n\cap V(\Tilde{D})\subseteq ((X_n\cap \overline{D})\cup N(D))\cap V(\Tilde{D})\subseteq V_D.\]
Therefore $G_{3n+3}=G_{3n+2}\cup\bigcup_{D\in\cC_{3n+2}}V_D$ satisfies $\ref{itemG:contains_vertices}$.
Furthermore, since $N(D)$ is finite and $X_n$ is closed, we have
\[\partial V_D=\overline{((X_n\cap \overline{D})\cup N(D))}\cap\partial D=X_n\cap\partial D=\Xi_n\cap\partial D.\]
Then together with $\ref{itemG:ends2}$ we obtain $\partial G_{3n+3}=\Xi_n$ which proves $\ref{itemG:ends3}$.
\end{proof}

\begin{thm}
\label{thm:topology_to_td}
Let $G$ be a connected graph and $\Xi\subseteq\Omega(G)$. Suppose that there is a sequence $X_0\subseteq X_1\subseteq X_2\subseteq\dots$ of subsets of $V(G)\cup\Xi$ that are closed in $|G|$ with $V(G)\cup\Xi=\bigcup_{n\in\NN} X_n$. Denote $\Xi_n:=X_n\cap\Omega(G)$ and let $\Delta_n$ be a discrete subset of $\Xi_n\setminus\bigcup_{i<n}\Xi_i$ for all $n\in\NN$.
Then there is an upwards connected tree-decomposition $\script{T}=(T,\cV)$ of finite adhesion with connected parts which homeomorphically displays $\Omega(G)\setminus\Xi$ such that the boundary of every part contains at most one end from $\bigcup_{n\in\NN}\Delta_n$.
\end{thm}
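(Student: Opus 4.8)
The plan is to read the decomposition tree off the nested sequence $G_0\subseteq G_1\subseteq\cdots$ produced by Lemma~\ref{lem:finite_adhesion_sequence}. Writing $\cC_n$ for the components of $G-G_n$, I would let the nodes of $T$ be all components occurring at any level, $V(T):=\bigcup_{n}\cC_n$, rooted at the unique $G\in\cC_0$ (as $G_0=\emptyset$ and $G$ is connected). Since $G_n\subseteq G_{n+1}$, each $D'\in\cC_{n+1}$ lies in a unique $D\in\cC_n$, which I declare its parent; levels decrease along parents, so this is a rooted tree. To $D\in\cC_n$ I assign the part $V_D$ that the construction adds inside $\Tilde D$ in passing from $G_n$ to $G_{n+1}$.

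First I would verify that $\script T=(T,(V_D))$ is a tree-decomposition of finite adhesion with connected parts. Connectedness of the parts is immediate, as each $V_D$ is an envelope (Corollary~\ref{cor:envelope}) or the vertex set of a rayless normal tree. The decisive computation is that the separator on an edge $DD'$, with $D'\in\cC_{n+1}$ a child of $D$, equals $N(D')$: the inclusion $V_D\cap V_{D'}\subseteq N(D')$ holds because $V_{D'}\subseteq D'\cup N(D')$ while $V_D\subseteq V(G_{n+1})$ avoids $D'$, and conversely every $y\in N(D')$ lies in $V_{D'}$ and, depending on whether $y$ enters the sequence before or at level $n$, also in $V_D$. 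Finiteness of $N(D')$ gives finite adhesion, and keeping track of the level at which each vertex first appears yields (T1)--(T3).

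The crux will be upwards connectedness, which unlocks Lemma~\ref{lem_upwardsconndisplay}. A parallel bookkeeping argument shows that the union of the parts in the subtree hanging below $D'$ is exactly $A_{D'}=D'\cup N(D')=V(\Tilde{D'})$; together with $X_{DD'}=N(D')$ this makes the upper side $H_{DD'}=G[A_{D'}]-N(D')=G[D']$ a single nonempty connected component of $G-N(D')$, so $\script T$ is upwards connected and hence homeomorphically displays its boundary. To identify that boundary I would use that, as every $G_n$ has finite adhesion, the ends split as $\Omega(G)=\partial G_n\sqcup\{\omega:\omega\text{ lives in some }D\in\cC_n\}$; an end descends to level $n$ in $T$ exactly when it lives in a component of $\cC_n$, i.e.\ when $\omega\notin\partial G_n$. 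Thus $\omega$ corresponds to an end of $T$ iff $\omega\notin\partial G_n$ for all $n$; since $\partial G_{3n+3}=\Xi_n$ by \ref{itemG:ends3} and these sets are increasing and cofinal, this holds iff $\omega\notin\bigcup_n\Xi_n=\Xi$. Hence the boundary is $\Omega(G)\setminus\Xi$.

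Finally I would check the $\Delta$-condition by inspecting $\partial V_D$ in the three substeps. For $D\in\cC_{3n+1}$ the part is a rayless normal tree, so $\partial V_D=\emptyset$; for $D\in\cC_{3n}$ and $D\in\cC_{3n+2}$ the construction yields $\partial V_D\subseteq\Xi_n\cap\partial D$. A component at level $3n$ or later sits below level $3n$, so its boundary avoids $\Xi_{n-1}$ and $\partial V_D\subseteq\Xi_n\setminus\Xi_{n-1}$ meets $\bigcup_m\Delta_m$ only within $\Delta_n$. For $D\in\cC_{3n}$ the envelope deletes the isolating neighbourhoods $\cO_D$ of the ends of $\Delta_n\cap\partial D$, giving $\partial V_D\cap\Delta_n=\emptyset$; for $D\in\cC_{3n+2}$ property \ref{itemG:delta_distinguished} gives $|\Delta_n\cap\partial D|\le 1$. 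So every $\partial V_D$ contains at most one end of $\bigcup_m\Delta_m$. I expect the main obstacle to be the exact separator and side identities $X_{DD'}=N(D')$ and $A_{D'}=V(\Tilde{D'})$ underpinning upwards connectedness, along with the birth-level bookkeeping behind the decomposition axioms.
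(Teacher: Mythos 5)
Your proposal is correct and follows essentially the same route as the paper: the decomposition tree is the inclusion order on $\bigsqcup_n \cC_n$, the part at $D\in\cC_n$ is $N(D)\cup(D\cap G_{n+1})$, upwards connectedness plus Lemma~\ref{lem_upwardsconndisplay} gives the homeomorphic display of the boundary $\Omega(G)\setminus\Xi$, and the $\Delta$-condition is extracted from properties \ref{itemG:ends2}, \ref{itemG:delta_distinguished} and \ref{itemG:ends3} of Lemma~\ref{lem:finite_adhesion_sequence}. Your part-by-part inspection of $\partial V_D$ in the three substeps is just a slightly more granular phrasing of the paper's observation that $\Delta_n\subseteq\partial G_{3n+3}\setminus\partial G_{3n+2}$, so the ends of $\Delta_n$ live only in parts indexed by $\cC_{3n+2}$.
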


\begin{proof}
Let $G_0 \subseteq G_1 \subseteq G_2 \subseteq \dots$ be the sequence from Lemma~\ref{lem:finite_adhesion_sequence} with properties $\ref{itemG:connected_parts}-\ref{itemG:ends3}$ and suppose without loss of generality that $G_0 = \emptyset$. This sequence gives rise to a tree-decomposition $\cT=\p{T,\cV}$ of finite adhesion and into connected parts as follows: Write $\cC_n$ for the set of components of $G-G_n$.
We define a tree order $\leq_T$ on $T:=\bigsqcup_{n\in\NN}\cC_n$ as follows: For all $C_n\in\cC_n$ and $C_m\in\cC_m$, let $C_n\leq_T C_m$ if and only if $C_n\supseteq C_m$ and $n\leq m$; this will be our decomposition tree.
Note that $G_0 = \emptyset$ ensures $T$ has a root whose associated part is $G$.
The part corresponding to a node $C \in \cC_n$ of $T$ will be $N(C) \cup \p{C \cap G_{n+1}}$ (which is precisely the set $V_C$ from the proof of Lemma \ref{lem:finite_adhesion_sequence}).
Then it is readily checked that all properties (T1) -- (T3) of a tree-decomposition are satisfied, in particular (T1) holds by $\ref{itemG:contains_vertices}$. All parts of $\cT$ are connected by $\ref{itemG:connected_parts}$.

It is clear from the construction that $\cT$ is upwards connected.
Furthermore, by $\ref{itemG:ends3}$ the interior of $\cT$ is $\Xi$ and hence its boundary is $\Omega(G)\setminus\Xi$.
Therefore $\cT$ homeomorphically displays $\Omega(G)\setminus\Xi$ by Lemma~\ref{lem_upwardsconndisplay}.

It is left to show that in every part of $\cT$ there lives at most one end from $\bigcup_{n\in\NN}\Delta_n$.
For any $n\in\NN$, we have $\Delta_n\subseteq\partial G_{3n+3}\setminus\partial G_{3n+2}$ by $\ref{itemG:ends2}$ and $\ref{itemG:ends3}$.

Since this inclusion holds for all $n\in\NN$, it follows that $\partial G_{3n+3}\setminus\partial G_{3n+2}$ does not contain ends from $\Delta_{n'}$ for any $n'\neq n$.
Furthermore, by $\ref{itemG:delta_distinguished}$ every component in $\cC_{3n+2}$ contains at most one end from $\Delta_n$ in its boundary.
Hence all ends from $\Delta_n$ are contained in the boundaries of parts of the form $N(C)\cup(C\cap G_{3n+3})$ for $C\in\cC_{3n+2}$, and in the boundary of every such part there is no end from $\Delta_{n'}$ for any $n'\neq n$ and at most one end from $\Delta_n$.
This finishes the proof.
\end{proof}

\section{Tree-decompositions displaying sets of ends}
In this section we will prove our characterisation announced in Theorem~\ref{thm_main3} of \emph{displayable} subsets of $\Omega(G)$, i.e.\ subsets which can be (homeomorphically) displayed by a tree-decomposition of finite adhesion.

\begin{thm}\label{thm:display_sets}
For any connected graph $G$ and any set $\Psi$ of ends of $G$ the following are equivalent:
\begin{enumerate}
    \item\label{item_thm:display_sets_1} There is an upwards connected tree-decomposition of finite adhesion with connected parts that homeomorphically displays $\Psi$.
    \item There is a tree-decomposition of finite adhesion displaying $\Psi$.
    \item There is a tree-decomposition of finite adhesion with boundary $\Psi$.
    \item $|G|_\Psi$ is completely metrizable.
    \item\label{item_thm:display_sets_5} $\Psi$ is $G_\delta$ in $|G|$.
\end{enumerate}
\end{thm}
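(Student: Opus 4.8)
The plan is to run the cyclic chain $(\ref{item_thm:display_sets_1}) \Rightarrow (2) \Rightarrow (3) \Rightarrow (\ref{item_thm:display_sets_5}) \Rightarrow (\ref{item_thm:display_sets_1})$, which already yields the equivalence of every statement except complete metrizability, and then to fold in $(4)$ by establishing $(\ref{item_thm:display_sets_1}) \Rightarrow (4) \Rightarrow (\ref{item_thm:display_sets_5})$. The implications $(\ref{item_thm:display_sets_1}) \Rightarrow (2)$ and $(2) \Rightarrow (3)$ are immediate from the definitions: a homeomorphic display is a display, and a display has boundary $\Psi$. Throughout I will use the translation, via Lemma~\ref{lem_Fsigma} and complementation, between ``$\Psi$ is $G_\delta$ in $|G|$'' and ``$V(G) \cup (\Omega(G)\setminus\Psi)$ is $F_\sigma$ in $|G|$'', noting that $|G|\setminus\Psi = G \cup (\Omega(G)\setminus\Psi)$ and applying Lemma~\ref{lem_Fsigma} with the set $\Omega(G)\setminus\Psi$ in place of $\Psi$.

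For $(3) \Rightarrow (\ref{item_thm:display_sets_5})$ I would argue as in the implication $(\ref{item_Psi=Omegai'})\Rightarrow(\ref{item_Psi=Omegaiii})$ of Theorem~\ref{thm_Psi=Omega}, but keeping track of the interior ends. Fix a tree-decomposition $(T,\cV)$ with root $r$ and boundary $\Psi$, and for each $n$ let $Y_n$ be the union of the parts $V_t$ with $t \in T^{\leq n}$ together with all ends living in those parts. Using finite adhesion, the separator induced by the unique edge of $T$ between levels $n$ and $n+1$ on the relevant branch cuts $Y_n$ off from every vertex, every boundary end, and every interior end living beyond level $n$; hence each $Y_n$ is closed in $|G|$. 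As every vertex lies in some part and every interior end lives in some part, $\bigcup_n Y_n = V(G) \cup (\Omega(G)\setminus\Psi)$ is $F_\sigma$, so $\Psi$ is $G_\delta$. The converse implication $(\ref{item_thm:display_sets_5}) \Rightarrow (\ref{item_thm:display_sets_1})$ is where the machinery pays off: if $\Psi$ is $G_\delta$ then $V(G)\cup\Xi$ with $\Xi := \Omega(G)\setminus\Psi$ is $F_\sigma$, so we may write it as an increasing union of sets $X_0 \subseteq X_1 \subseteq \dots$ closed in $|G|$. Applying Theorem~\ref{thm:topology_to_td} to this sequence with the trivial choice $\Delta_n := \emptyset$ produces an upwards connected tree-decomposition of finite adhesion with connected parts that homeomorphically displays $\Omega(G)\setminus\Xi = \Psi$, which is exactly $(\ref{item_thm:display_sets_1})$.

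It remains to incorporate $(4)$. For $(\ref{item_thm:display_sets_1}) \Rightarrow (4)$ I would adapt Diestel's complete-metric construction used for $(\ref{item_Psi=Omegaiv})\Rightarrow(\ref{item_Psi=Omegaii})$ in Theorem~\ref{thm_Psi=Omega}: given the displaying tree-decomposition, metrize $|G|_\Psi$ by combining the length metric near each part with the divergence-in-$T$ distance on the displayed ends (the transported complete ultrametric of $\Omega(T) \cong \Psi$), and check that upwards connectedness and finite adhesion make this metric compatible and complete. For $(4) \Rightarrow (\ref{item_thm:display_sets_5})$ I would use the standard facts that a completely metrizable space is \v{C}ech-complete and that a \v{C}ech-complete space densely embedded in a Tychonoff space is $G_\delta$ there. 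Since $G$ is dense in $|G|$, the subspace $|G|_\Psi = G\cup\Psi$ is dense in the Tychonoff space $|G|$, so $|G|_\Psi$ is $G_\delta$ in $|G|$; intersecting with the closed set $\Omega(G)$ then shows that $\Psi = |G|_\Psi \cap \Omega(G)$ is $G_\delta$ as well.

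I expect the main obstacle to be the completeness verification in $(\ref{item_thm:display_sets_1}) \Rightarrow (4)$: producing an \emph{honest complete} metric on $|G|_\Psi$ (rather than merely a compatible one) requires carefully using that the displayed part $\Psi$ inherits the complete ultrametric of $\Omega(T)$ while the interior ends have all been pushed into the parts, so that a Cauchy sequence must either localise to finitely many parts and converge inside $G$, or have its $T$-addresses converge to an end of $T$ and hence to a point of $\Psi$ — in neither case escaping $|G|_\Psi$. A secondary technical point is the appeal to Tychonoff-ness of $|G|$ in $(4) \Rightarrow (\ref{item_thm:display_sets_5})$, which should be recorded separately; it follows from the zero-dimensional clopen basis on $V(G)\cup\Omega(G)$ together with the local length-space structure on the edges.
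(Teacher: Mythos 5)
Your chain of implications is sound except for the final step of $(4)\Rightarrow(\ref{item_thm:display_sets_5})$. Having shown (correctly, via \v{C}ech-completeness) that $|G|_\Psi$ is $G_\delta$ in $|G|$, you conclude that $\Psi=|G|_\Psi\cap\Omega(G)$ is $G_\delta$ in $|G|$ by ``intersecting with the closed set $\Omega(G)$''. That inference is invalid: intersecting a $G_\delta$ subset of $|G|$ with a closed set only exhibits the intersection as $G_\delta$ \emph{relative to that closed set}, and a closed subset of a non-metrizable space need not itself be $G_\delta$ in the ambient space. Indeed, $\Omega(G)$ is $G_\delta$ in $|G|$ precisely when $G$ has a normal spanning tree (Theorem~\ref{thm_Psi=Omega}), which can fail. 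The correct route --- the one the paper takes --- is to note that $\Psi$ is closed in the metrizable space $|G|_\Psi$ and hence $G_\delta$ in $|G|_\Psi$ \cite[Corollary~4.1.12]{engelking1989book}, and then to combine this with your (correct) fact that $|G|_\Psi$ is $G_\delta$ in $|G|$ using transitivity of the $G_\delta$ property. Since the needed ingredient, metrizability of $|G|_\Psi$, is exactly your hypothesis in $(4)$, the repair is immediate; but as written the step fails.

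The remainder matches or validly varies the paper's proof. Your $(3)\Rightarrow(\ref{item_thm:display_sets_5})$ is a genuine alternative: you exhibit $V(G)\cup(\Omega(G)\setminus\Psi)$ as an increasing union of closed sets $Y_n$ (the parts up to level $n$ together with the ends living in them) and pass to complements via Lemma~\ref{lem_Fsigma}, whereas the paper writes $\Psi$ directly as $\bigcap_n O_n$ for explicit open sets built from the level-$n$ separators. Both work; yours is the natural extension of the argument for $(\ref{item_Psi=Omegai'})\Rightarrow(\ref{item_Psi=Omegaiii})$ in Theorem~\ref{thm_Psi=Omega}, and the closedness of each $Y_n$ does follow from finite adhesion exactly as you indicate, since the separator between levels $n$ and $n+1$ on the branch towards $f_{\mathcal{T}}(\omega)$ separates any end $\omega\notin Y_n$ from all of $Y_n$. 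Your $(\ref{item_thm:display_sets_5})\Rightarrow(\ref{item_thm:display_sets_1})$ via Theorem~\ref{thm:topology_to_td} with $\Delta_n=\emptyset$ and your $(\ref{item_thm:display_sets_1})\Rightarrow(4)$ follow the paper; for the latter, the paper's explicit metric (distance $1/2^n$ between distinct vertices whose least containing parts coincide at level $n$, and the weighted path-length metric on $T$ otherwise) realises precisely the Cauchy-sequence dichotomy you describe, with upwards connectedness used only to verify compatibility with \textsc{MTop}.
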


\begin{proof}
We demonstrate the following sequence of implications:

\begin{center}
\begin{tikzcd}[row sep=tiny, column sep=0.9em]
&& (2) \arrow[rr, Rightarrow] && (3) \arrow[drr, Rightarrow] && && \\
(1) \arrow[urr, Rightarrow] \arrow[drrr, Rightarrow] && && && (5) \arrow[rr, Rightarrow] && (1) \\
&& & (4) \arrow[urrr, Rightarrow] & && &&
\end{tikzcd}
\end{center}

The implications $(1)\Rightarrow (2) \Rightarrow (3)$ are trivial.

$(1) \Rightarrow (4)$:
Let $(T,\cV)$ be a tree-decomposition of finite adhesion of $G$ homeomorphically displaying $\Psi$ with a fixed root $r$ of $T$. We begin by defining a complete metric $d_T$ on $V(T) \cup \Omega(T)$. Assign to every $e\in E(T)$ a number $\ell(e)$:
If $e\in E(T)$ is a $T^n$--$T^{n+1}$ edge (i.e. an edge between level $n$ and level $n+1$ of $T$), we set $\ell(e)=1/2^n$.
If $P$ is a (possibly infinite) path in $T$, we say that the finite number $\sum_{e\in E(P)}\ell(e)$ is the \emph{length} of $P$.
Now we define $d_T(x,y)$ for all $x,y\in V(T) \cup \Omega(T)$:
If $x$ and $y$ are both vertices, let $d_T(x,y)$ be the length of the unique $x$--$y$ path in $T$.
If $x$ is a vertex and $y$ is an end, then let $d_T(x,y)$ be the length of the unique ray from $y$ which starts in $x$.
Similarly, if both $x$ and $y$ are ends, let $d_T(x,y)$ be the length of the unique double ray in $T$ between $x$ and $y$.
It is straight-forward to check that $d_T$ defines a complete metric on $V(T) \cup \Omega(T)$.

We now use $d_T$ to define a metric $d$ on $G \cup \Psi$.
For every vertex $v \in V(G)$, let $v_T$ be the least vertex of $T$ with respect to the tree order such that $v$ is contained in the part $V_{v_T}$ (this is well-defined according to (T3)).
Additionally, for every end $\omega\in\Psi$, let $\omega_T$ be the end of $T$ which $\omega$ corresponds to.
For all $x,y\in V\cup\Psi$, we define
$$
d(x,y) =
\begin{cases*}
    0 & if $x=y$,\\
    1/2^n & if $x\neq y\in V(G)$ and $x_T=y_T$ lies in the $n$th level of $T$,\\
    d_T(x_T,y_T) & if $x_T\neq y_T.$
\end{cases*}
$$

Next, we prove that $d$ is a metric on $V(G)\cup\Psi$.
It is clear that $d(x,x) = 0$ and $d(x, y) > 0$ for all $x \neq y$ and that $d$ is symmetric.
We show that triangle inequality holds: Let $x,y,z$ be pairwise distinct elements of $V(G)\cup\Psi$.
We need to show that
\begin{equation}
d(x,z)\leq d(x,y)+d(y,z). \tag{$*$}\label{eq:triangle-inequality}
\end{equation}

Clearly, (\ref{eq:triangle-inequality}) holds if $x_T=y_T=z_T$.
If $x_T=y_T\neq z_T$, then $d(x,z)=d(y,z)$ and hence (\ref{eq:triangle-inequality}) follows. A similar argument works if $y_T=z_T$.
Next, suppose that $x_T=z_T\neq y_T$ and let $n$ be the level of $x_T$ in $T$. Then $d(x,z)=1/2^n$ and since $\ell(e)\geq 1/2^n$ for every edge $e$ of $T$ with endvertex $x_T$ also $d(x,y)\geq 1/2^n$, which proves (\ref{eq:triangle-inequality}). Finally, if $x_T$, $y_T$ and $z_T$ are pairwise distinct, then (\ref{eq:triangle-inequality}) follows from the triangle inequality for $d_T$. This finishes the proof of (\ref{eq:triangle-inequality}).

For the proof that $d$ is complete, let $(x_n)_{n\in\NN}$ be a Cauchy-sequence in $V(G)\cup\Psi$. Hence $((x_n)_T)_{n\in\NN}$ is a Cauchy-sequence in $T$ because $d_T(v_T,w_T)\leq d(v,w)$ for all $v,w\in V(G)\cup\Psi$.
If $((x_n)_T)_{n\in\NN}$ is eventually constant, then $(x_n)_{n \in \NN}$ is eventually contained in $V_t$ for some $t\in V(T)$.
If $t$ lies in the $n$th level of $T$, then $d(v,w)\geq 1/2^n$ for all $v\neq w\in V_t$. Hence also $(x_n)_{n\in\NN}$ is eventually constant.
Otherwise, if $((x_n)_T)_{n\in\NN}$ is not eventually constant, then $((x_n)_T)_{n\in\NN}$ converges to an end $\omega$ of $T$ and thus $(x_n)_{n\in\NN}$ converges to the end of $G$ which corresponds to $\omega$.
Finally, we extend $d$ to a complete metric on $G\cup\Psi$ by relating every edge $vw$ of $G$ linearly to a real closed interval of length $d(v,w)$. We omit the details.

It is left to show that the metric $d$ induces the subspace topology on $G\cup\Psi$ inherited from $|G|$.
We need to show for any given $x\in G\cup\Psi$ that
\begin{enumerate}[label=($\dagger$)]
\item every \textsc{MTop}-basic open neighbourhood of $x$ in $G\cup\Psi$ contains an open $\eps$-ball around $x$ with respect to $d$, and vice versa. \label{item:metric-induces-topology}
\end{enumerate}
This is clear if $x$ is an inner point of an edge. Next, let $x\in V(G)$ be a vertex and $n$ the level of $x_T$ in $T$.
Then \ref{item:metric-induces-topology} is true because every edge of $G$ which has $x$ as an endvertex has length at least $1/2^n$ and at most $1$.

Now suppose that $x\in\Psi$ and let $\hat{C}_\eps(S,x)$ be a basic open neighbourhood of $x$ in $|G|$ for some $\eps \leq 1$.
Let $n$ be the maximum level of $T$ containing a vertex $s_T$ for some $s\in S$.
We show that the open ball $B$ in $|G|$ with respect to $d$ with radius $\eps/2^n$ and centre $x$ is a subset of $\hat{C}_\eps(S,x)$.
First, consider the open ball $B'$ in $T$ with respect to the metric $d_T$ with radius $\eps/2^n$ and centre $x'$, where $x'$ is the end of $T$ which $x$ corresponds to.
Let $e$ be the edge of $T$ which is contained in the normal $x'$-ray in $T$ and connects a node $u_n$ form the $n$th level of $T$ to a node $u_{n+1}$ from the $n+1$st level.
Then $B'$ is completely contained in the closure of the component $D$ of $T-e$ with $u_{n+1}\in V(D)$ since
\[d_T(u_{n+1},x')=\sum_{i\geq n+1}1/2^i=1/2^n\geq\eps/2^n.\]
In particular, every vertex in $B'$ lies in the $n+1$st level of $T$ or above.
Next, it follows from the definition of the metric $d$ that every vertex in $B$ is contained in a part $V_t$ with $t\in B'\subseteq \overline{D}$, but no vertex of $B$ can be contained in a part $V_t$ such that the level of $t$ in $T$ is at most $n$.
Therefore all vertices in $B$ and similarly also all ends in $B$ are contained in $\overline{H_e}$, where $H_e$ is the subgraph of $G$ from the definition of upwards connectedness.
Since $H_e$ is disjoint from $S$, connected by upwards connectedness of $\cT$, and $x$ orients $e$ towards $x'$, we have $\overline{H_e}\subseteq \hat{C}(S,\omega)$.
Hence all vertices and ends in $B$ and all edges with both endvertices in $B$ are contained in $\hat{C}_\eps(S,x)$; it is left to show the same for points of edges in $B$ with only one endvertex in $B$.
Every such edge $f$, however, has its other endvertex in $V_{u_n}$ by (T3), and as $u_n$ lies in the $n$th level of $T$, the length of $f$ with respect to $d$ is at least $1/2^n$.
Recall that any point $p$ on $f$ in $B$ has distance less than $\eps/2^n$ to $x$ and therefore also to the end vertex of $f$ in $B$.
Thus $p$ is contained in $\hat{C}_\eps(S,x)$, as desired.

Conversely, let $B$ be an open $\eps$-ball around $x$ with respect to $d$ of radius $0<\eps\leq 1$.
Let $\omega\in\Omega(T)$ be the end of $T$ corresponding to $x$ and $R$ the rooted $\omega$-ray in $T$.
Choose $n\in\NN$ such that $1/{2^n}<\eps$ and let $t^i\in V(T)$ be the node in $R\cap T^i$ for $i\in\{n+2,n+3\}$.
Then define $S$ as the separator induced by the edge $t^{n+2}t^{n+3}$ of $T$ in $G$.
Now $C:=\hat{C}_{1/2^{n+1}}(S,x)$ is a subset of $B$:
Let $y$ be any point in $C$; we have to show that $d(y,x)<\eps$.
First suppose that $y\in C(S,x)$ and let $w$ be a vertex from the part $V_{t^{n+3}}$.
For any point $z\in C(S,x)$ we have
$$d(w,z)\leq\sum_{i\geq n+3}1/2^i=1/2^{n+2}.$$
Hence
$$d(y,x)\leq d(y,w)+d(w,x)\leq 1/{2ˆ{n+2}} + 1/{2ˆ{n+2}} = 1/2^{n+1}<\eps.$$
Next, suppose that $y$ is an inner point of an $S$--$C(S,x)$ edge with endvertex $v$ in $C(S,x)$.
We have seen above that $d(v,x)\leq 1/2^{n+1}$.
Hence it follows from the choice of $C$ that
$$d(y,x)\leq d(y,v)+d(v,x)\leq 1/2^{n+1}+1/2^{n+1}<\eps$$
which proves $C\subseteq B$.

$(4)\Rightarrow (5)$:
Assume that $|G|_\Psi$ is completely metrizable.
We claim that
\begin{itemize}
    \item $\Psi$ is $G_\delta$ in $|G|_\Psi$, and
    \item $|G|_\Psi$ is $G_\delta$ in $|G|$.
\end{itemize}
This implies (5) as being $G_\delta$ is transitive.

Since closed subsets of metrizable spaces are always $G_\delta$ \cite[Corollary~4.1.12]{engelking1989book}, we get that $\Psi$ is $G_\delta$ in $|G|_\Psi$.
Next, by a well-known result of \v{C}ech \cite[Theorem~4.3.26]{engelking1989book} all completely metrizable spaces, and so in particular  $|G|_\Psi$, are \v{C}ech-complete, and by \cite[Exercise 3.9.A]{engelking1989book}, all \v{C}ech-complete spaces are $G_\delta$ in their closures. Thus we conclude that $|G|_\Psi$ is $G_\delta$ in its closure $|G|$.

$(3) \Rightarrow (5)$:
Let $(T,\cV)$ be a tree-decomposition of finite adhesion of $G$ with boundary $\Psi$. Fix a root $r$ of $T$ and denote by $E_n$ the set of all edges between the $n$th and $n+1$st level of $T$. For every edge $e\in E_n$, let $(A_e,B_e)$ be the respective separation of $G$ such that $V_r\subseteq A_e$ and let $S_e=A_e\cap B_e$ be the corresponding finite adhesion set. Note that $A_e$ contains every part $V_t$ with $t \in T^{\leq n}$.
We denote
$$\cC_e:= \bigcup\{\hat{C}_{1/2}(S_e,\omega):\omega\in\partial B_e\}.$$
Then $O_n:=\bigcup_{e\in E_n}\cC_e$ is an open set in $|G|$ because it is a union of open sets. We show that $\Psi=\bigcap_{n\in\NN}O_n$. Clearly, $\Psi\subseteq\bigcap_{n\in\NN}O_n$. For the converse inclusion, let $\omega\in\bigcap_{n\in\NN}O_n$.
We show that $\omega$ does not live in any part of $(T,\cV)$ and therefore lies in the boundary of $(T,\cV)$.
Indeed, if $\omega\in\partial V_t$ for $t \in T^n$, 
then $\omega$ is not contained in $O_{n+1}$, a contradiction.

$(5)\Rightarrow(1)$:
Let $\Psi\subseteq\Omega(G)$ be a $G_\delta$ set in $|G|$.
Hence $G\cup\Xi$ where $\Xi:=\Omega(G)\setminus\Psi$ is an $F_\sigma$ set in $|G|$ and by Lemma \ref{lem_Fsigma}, also $V(G)\cup\Xi$ is an $F_\sigma$ set in $|G|$.
This means that $V(G)\cup\Xi=\bigcup_{n\in\NN}X_n$ is a countable union of sets $X_n$ which are closed in $|G|$, we may assume that $X_0\subseteq X_1\subseteq\cdots$.
By applying Theorem~\ref{thm:topology_to_td} (with $\Delta_n = \emptyset$) there is an upwards connected tree-decomposition of finite adhesion into connected parts that homeomorphically displays $\Psi = \Omega(G) \setminus \bigcup_{n\in\NN} X_n$.
\end{proof}

\begin{cor}\label{end-space-of-tree}
Displayable sets of ends are completely metrizable.
\end{cor}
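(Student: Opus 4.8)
The plan is to read this off from Theorem~\ref{thm:display_sets} together with a classical theorem of Alexandrov. First I would note that if a set of ends $\Psi\subseteq\Omega(G)$ is displayable, then in particular item~(2) of Theorem~\ref{thm:display_sets} holds, and hence so do items~(4) and~(5): the $\Psi$-graph $|G|_\Psi = G\cup\Psi$ is completely metrizable, and $\Psi$ is $G_\delta$ in $|G|$.

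The key observation is then that $\Psi$, viewed as a subset of the subspace $|G|_\Psi\subseteq|G|$, is itself $G_\delta$ in $|G|_\Psi$: writing $\Psi=\bigcap_{n\in\NN}O_n$ with each $O_n$ open in $|G|$, we have $\Psi=\bigcap_{n\in\NN}\p{O_n\cap|G|_\Psi}$, a countable intersection of sets open in $|G|_\Psi$. Since $|G|_\Psi$ is completely metrizable, I would now invoke Alexandrov's theorem --- that every $G_\delta$-subspace of a completely metrizable space is again completely metrizable \cite{engelking1989book} --- to conclude that $\Psi$ is completely metrizable, as desired.

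The only subtlety, and the step most easily overlooked, is that Theorem~\ref{thm:display_sets}(4) asserts complete metrizability of the whole $\Psi$-graph $|G|_\Psi$ rather than of the end set $\Psi$ on its own; the extra work is precisely to pass from the ambient space down to the subspace $\Psi$. This is exactly what the $G_\delta$-characterisation in item~(5) delivers, as it is the hypothesis under which Alexandrov's theorem upgrades a subspace of a completely metrizable space to a completely metrizable space in its own right. One may note that Alexandrov's theorem is the converse of the facts about \v{C}ech-complete spaces already used in the implication $(4)\Rightarrow(5)$ above, so no genuinely new machinery is needed beyond the references already in play.
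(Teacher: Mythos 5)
Your proof is correct, but it takes a different route from the paper's. The paper also starts from the implication $(2)\Rightarrow(4)$ of Theorem~\ref{thm:display_sets} to get that $|G|_\Psi$ is completely metrizable, but then simply observes that $\Psi$ is \emph{closed} in $|G|_\Psi$ (since $G$ is open in $|G|$, so $\Omega(G)$ is closed there, and $\Psi=\Omega(G)\cap|G|_\Psi$) and invokes the elementary fact that closed subspaces of completely metrizable spaces are completely metrizable --- one just restricts the complete metric. You instead pass through item (5), note that a $G_\delta$-set of $|G|$ contained in $|G|_\Psi$ is $G_\delta$ in $|G|_\Psi$, and apply Alexandrov's theorem that $G_\delta$-subspaces of completely metrizable spaces are completely metrizable. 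Both arguments are sound; yours uses a strictly stronger tool (Alexandrov requires constructing a new complete metric, whereas the closed-subspace fact does not), but it has the mild advantage of not needing to check that $\Psi$ is closed in $|G|_\Psi$, and your remark that Alexandrov's theorem is essentially the converse of the \v{C}ech-completeness facts used in $(4)\Rightarrow(5)$ correctly places it within the machinery the paper already cites from \cite{engelking1989book}.
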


\begin{proof}
The implication $(2) \Rightarrow (4)$ in Theorem~\ref{thm:display_sets} says that for every displayable set of ends $\Psi \subseteq \Omega(G)$ in a graph $G$ we have that $|G|_\Psi$ is completely metrizable. Since $\Psi \subseteq |G|_\Psi$ is closed, and closed subspaces of completely metrizable spaces are again completely metrizable, it follows that $\Psi $ is completely metrizable.
\end{proof}

\begin{cor}
Let $G$ be a graph with a displayable set of ends $\Psi \subseteq \Omega(G)$ and let $\Phi$ be a subset of $\Psi$. Then $\Phi$ is (homeomorphically) displayable if and only if $\Phi$ is a $G_\delta$ set in $\Psi$.
\end{cor}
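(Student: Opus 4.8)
The plan is to deduce the statement directly from Theorem~\ref{thm:display_sets} together with the transitivity of the $G_\delta$ property. By that theorem, a subset $\Phi \subseteq \Omega(G)$ is (homeomorphically) displayable if and only if $\Phi$ is $G_\delta$ in $|G|$; applying the same theorem to $\Psi$ itself shows that the ambient displayable set $\Psi$ is $G_\delta$ in $|G|$. Hence the corollary reduces to the purely topological equivalence that for $\Phi \subseteq \Psi$ one has: $\Phi$ is $G_\delta$ in $|G|$ if and only if $\Phi$ is $G_\delta$ in the subspace $\Psi$. I would state this reduction first and then verify the two implications separately.

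For the forward implication I would argue as follows. Suppose $\Phi = \bigcap_{n\in\NN} O_n$ with each $O_n$ open in $|G|$. Since $\Phi \subseteq \Psi$, we may intersect with $\Psi$ to obtain $\Phi = \bigcap_{n\in\NN}(O_n \cap \Psi)$, and each $O_n \cap \Psi$ is open in the subspace topology on $\Psi$. Thus $\Phi$ is $G_\delta$ in $\Psi$. Note that this direction does not even use that $\Psi$ is $G_\delta$ in $|G|$; it is merely the observation that the $G_\delta$ property passes to subspaces.

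For the backward implication I would use transitivity. Suppose $\Phi = \bigcap_{n\in\NN} U_n$ with each $U_n$ open in $\Psi$, and write $U_n = V_n \cap \Psi$ for suitable $V_n$ open in $|G|$. Then $\Phi = \bigl(\bigcap_{n\in\NN} V_n\bigr) \cap \Psi$ is the intersection of the two $G_\delta$ sets $\bigcap_{n\in\NN} V_n$ and $\Psi$ of $|G|$, where it is exactly here that I invoke the hypothesis that $\Psi$ is $G_\delta$ in $|G|$, guaranteed by Theorem~\ref{thm:display_sets}. As a countable intersection of $G_\delta$ sets is again $G_\delta$, it follows that $\Phi$ is $G_\delta$ in $|G|$, which is what was needed.

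I do not expect any genuine obstacle: the argument is a short topological manipulation, and the only place where the displayability of $\Psi$ is actually used is the backward implication, where $\Psi$ being $G_\delta$ in $|G|$ makes transitivity applicable (this same transitivity of the $G_\delta$ property was already invoked in the proof of $(4)\Rightarrow(5)$ of Theorem~\ref{thm:display_sets}).
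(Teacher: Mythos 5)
Your proof is correct and follows essentially the same route as the paper, which simply cites the equivalence $(2)\Leftrightarrow(5)$ of Theorem~\ref{thm:display_sets} together with transitivity of the $G_\delta$-property; you have merely written out the two-line topological verification that the paper leaves implicit. No gaps.
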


\begin{proof}
Immediate from (2) $\Leftrightarrow$ (5) in Theorem~\ref{thm:display_sets} and transitivity of the $G_\delta$-property.
\end{proof}

\begin{cor}
Let $G$ be a graph with a normal spanning tree. Then a subset $\Phi \subseteq \Omega(G)$ is (homeomorphically) displayable if and only if $\Phi$ is a $G_\delta$ set in $\Omega(G)$.
\end{cor}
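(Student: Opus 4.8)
The plan is to recognise this statement as the special case $\Psi = \Omega(G)$ of the preceding corollary, the only extra input being that the normal-spanning-tree hypothesis is precisely what makes $\Omega(G)$ itself displayable. Concretely, I would first invoke Theorem~\ref{thm_Psi=Omega}: for a connected graph $G$, having a normal spanning tree (item~(\ref{item_Psi=Omegaiv})) is equivalent to $\Omega(G)$ being $G_\delta$ in $|G|$ (item~(\ref{item_Psi=Omegaiii})). So under our hypothesis we may fix that $\Omega(G)$ is $G_\delta$ in $|G|$. With this in hand, I would appeal to the equivalence (\ref{item_thm:display_sets_1})${}\Leftrightarrow{}$(\ref{item_thm:display_sets_5}) of Theorem~\ref{thm:display_sets}, which reduces the claim to showing that, for $\Phi \subseteq \Omega(G)$, being $G_\delta$ in $|G|$ is equivalent to being $G_\delta$ in the subspace $\Omega(G)$.

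One direction is free and requires no hypothesis on $G$: if $\Phi = \bigcap_{n} U_n$ with each $U_n$ open in $|G|$, then since $\Phi \subseteq \Omega(G)$ we have $\Phi = \bigcap_{n} (U_n \cap \Omega(G))$, and each $U_n \cap \Omega(G)$ is open in the subspace $\Omega(G)$; hence $\Phi$ is $G_\delta$ in $\Omega(G)$. This shows that every displayable $\Phi$ is $G_\delta$ in $\Omega(G)$, via Theorem~\ref{thm:display_sets}.

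For the converse I would use transitivity of the $G_\delta$-property, and this is exactly where the normal-spanning-tree hypothesis is consumed. Suppose $\Phi$ is $G_\delta$ in $\Omega(G)$, say $\Phi = \bigcap_{n} V_n$ with $V_n = W_n \cap \Omega(G)$ for open sets $W_n \subseteq |G|$. Then $\Phi = \left(\bigcap_{n} W_n\right) \cap \Omega(G)$ is the intersection of a $G_\delta$ subset of $|G|$ with $\Omega(G)$. Since $\Omega(G)$ is itself $G_\delta$ in $|G|$ by the first step, and a finite (indeed countable) intersection of $G_\delta$ sets is again $G_\delta$, it follows that $\Phi$ is $G_\delta$ in $|G|$, hence (homeomorphically) displayable by Theorem~\ref{thm:display_sets}.

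The main obstacle, such as it is, is purely this transitivity step, and it only goes through because the normal spanning tree guarantees that $\Omega(G)$ is $G_\delta$ in $|G|$; dropping this hypothesis would break the converse direction while leaving the easy direction intact. As this reduces the statement to the special case $\Psi = \Omega(G)$ of the preceding corollary together with Theorem~\ref{thm_Psi=Omega}, I expect the final write-up to be a single short paragraph.
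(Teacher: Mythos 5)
Your proposal is correct and follows essentially the same route as the paper: the paper also combines the implication (\ref{item_Psi=Omegaiv})~$\Rightarrow$~(\ref{item_Psi=Omegaiii}) of Theorem~\ref{thm_Psi=Omega} with the equivalence of displayability and the $G_\delta$-property from Theorem~\ref{thm:display_sets}, together with transitivity of being $G_\delta$ (packaged there as the preceding corollary applied to $\Psi=\Omega(G)$). Your write-up merely inlines that intermediate corollary, spelling out the transitivity argument explicitly, which is fine.
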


\begin{proof}
Follows from (\ref{item_Psi=Omegaiv}) $\Rightarrow$ (\ref{item_Psi=Omegaiii}) in Theorem~\ref{thm_Psi=Omega} together with the previous corollary for $\Psi=\Omega(G)$.
\end{proof}

\section{Tree-decompositions distributing sets of ends}
\label{sec_8}

In this section we characterise which subsets of ends can be distributed by a tree-decomposition of finite adhesion.
Recall that  a topological space $X \subseteq Z$ has a \emph{$\sigma$-discrete expansion in $Z$} if it can be written as a disjoint union $X = \bigsqcup_{n\in\NN} X_n$ such that all $X_n$ are discrete and all $Y_n:= \bigcup_{i \leq n} X_i$ are closed in $Z$.

\begin{thm}\label{itemDistr}
Let $G$ be a connected graph and $\Xi \subseteq \Omega(G)$ a subset of ends of $G$. Then the following are equivalent:
\begin{enumerate}[label=(\roman*)]
    \item There is a tree-decomposition of finite adhesion distributing $\Xi$.
    \item $V(G)$ is a countable union of slender vertex sets $U_n$ such that $\bigcup_{n \in \mathbb{N}} \partial U_n = \Xi$.
    \item $V(G) \cup \Xi $ has a $\sigma$-discrete expansion in $|G|$.
    \item There is an upwards connected tree-decomposition of finite adhesion with connected parts realising $(\Xi,\Xi^\complement)$.
    \end{enumerate}
\end{thm}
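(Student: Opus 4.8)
The plan is to establish the cycle $(\mathrm{i})\Rightarrow(\mathrm{ii})\Rightarrow(\mathrm{iii})\Rightarrow(\mathrm{iv})\Rightarrow(\mathrm{i})$, where $(\mathrm{ii})$ and $(\mathrm{iii})$ are the two topological reformulations and the two genuine graph-theoretic statements are $(\mathrm{i})$ and $(\mathrm{iv})$. The implication $(\mathrm{iv})\Rightarrow(\mathrm{i})$ is immediate, since a tree-decomposition realising $(\Xi,\Xi^\complement)$ distributes $\Xi$ by definition.

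For $(\mathrm{i})\Rightarrow(\mathrm{ii})$, fix a root $r$ of the decomposition tree $T$ and set $U_n:=\bigcup_{t\in T^{\le n}}V_t$, so that $V(G)=\bigcup_n U_n$. I would first check $\partial U_n\subseteq\Xi$: if $\omega\notin\Xi$ then, as $\script{T}$ distributes $\Xi$, the end $\omega$ corresponds to an end of $T$, and the separator $X_e$ of the edge $e$ of the corresponding normal ray between levels $n$ and $n+1$ is a finite set separating $U_n$ from every $\omega$-ray, so $\omega\notin\partial U_n$. Next I would record the general fact that in any tree-decomposition of finite adhesion an end living in a part $V_t$ already lies in $\partial V_t$, because any of its rays meets $V_t$ infinitely often: the ray cannot have a tail inside the subtree beyond any neighbour of $t$, so each excursion into such a subtree must re-enter through the finite separator, which lies in $V_t$. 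Consequently an end of $\Xi$ living at level $m$ lies in $\partial U_m$, giving $\bigcup_n\partial U_n=\Xi$ and, together with the first point, the clean description $\partial U_n=\{\omega\in\Xi:\omega\text{ lives at level}\le n\}$.

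The crux of $(\mathrm{i})\Rightarrow(\mathrm{ii})$ is that each $U_n$ is slender. Since $\overline{U_n}$ contains no inner points of edges, every vertex is isolated in $V(G)\cup\Omega(G)$, and no end of $\partial U_n$ is isolated in $\overline{U_n}$ (each is a limit of vertices of $U_n$), one computes $\overline{U_n}'=\partial U_n$, so it suffices to prove $(\partial U_n)^{(n+1)}=\emptyset$. I would argue by induction on $n$, the key step being that every $\omega\in\partial U_n$ living at the deepest level $n$ is isolated in $\partial U_n$: for the separator $X_e$ of the edge just above the part $V_t\ni\omega$, the only node of level $\le n$ on the far side of $e$ is $t$ itself, so injectivity of $f_\script{T}\restrict\Xi$ forces $\hat C(X_e,\omega)\cap\partial U_n=\{\omega\}$. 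Hence $(\partial U_n)'\subseteq\partial U_{n-1}$, and the inductive hypothesis $(\partial U_{n-1})^{(n)}=\emptyset$ yields $(\partial U_n)^{(n+1)}=\emptyset$. This is the step I expect to be the main obstacle, as it is precisely where the distributing hypothesis (injectivity of $f_\script{T}$) is converted into a finite bound on the Cantor--Bendixson rank; it also reproves the implication $(\ref{main_item3})\Rightarrow(\ref{main_item4})$ sketched after Theorem~\ref{thm_main}.

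For $(\mathrm{ii})\Rightarrow(\mathrm{iii})$ I would argue purely topologically. Replacing $U_n$ by $U_0\cup\dots\cup U_n$ (still slender, with the same union of boundaries), we may assume the closed sets $C_n:=\overline{U_n}\subseteq V(G)\cup\Xi$ are nested with $\bigcup_n C_n=V(G)\cup\Xi$, each scattered of finite rank $r_n$. For each $n$ the chain $C_{n-1}=C_{n-1}\cup C_n^{(r_n)}\subseteq\dots\subseteq C_{n-1}\cup C_n^{(0)}=C_n$ interpolates between $C_{n-1}$ and $C_n$ through closed sets whose successive differences $(C_n^{(i)}\setminus C_n^{(i+1)})\setminus C_{n-1}$ are discrete, being subsets of the discrete sets $C_n^{(i)}\setminus C_n^{(i+1)}$; concatenating these finite chains over all $n$ produces an increasing sequence of closed sets exhausting $V(G)\cup\Xi$ with discrete successive differences, i.e.\ a $\sigma$-discrete expansion. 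Finally, $(\mathrm{iii})\Rightarrow(\mathrm{iv})$ is an application of Theorem~\ref{thm:topology_to_td}: writing the expansion as $V(G)\cup\Xi=\bigsqcup_n X_n$ with $Y_n=\bigcup_{i\le n}X_i$ closed, I would feed the increasing closed sequence $(Y_n)$ into the theorem together with $\Delta_n:=X_n\cap\Omega(G)$, which is discrete, equals $\Xi_n\setminus\bigcup_{i<n}\Xi_i$, and satisfies $\bigcup_n\Delta_n=\Xi$. The theorem then yields an upwards connected tree-decomposition of finite adhesion with connected parts that homeomorphically displays $\Omega(G)\setminus\Xi=\Xi^\complement$ and in which at most one end of $\Xi$ lives in each part; the former gives the displaying half and the latter the injectivity needed for the distributing half of realising $(\Xi,\Xi^\complement)$.
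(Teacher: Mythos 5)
Your proposal is correct and follows essentially the same route as the paper: the same cycle of implications, the same sets $U_n=\bigcup_{t\in T^{\le n}}V_t$ with the injectivity of $f_\mathcal{T}\restrict\Xi$ used to isolate the new ends at each level, the same interpolation by Cantor--Bendixson derivatives between the closed sets $\overline{U_n}$, and the same application of Theorem~\ref{thm:topology_to_td} with $\Delta_n=X_n'\cap\Omega(G)$. Your write-up merely supplies more detail than the paper at the slenderness step (where the paper is quite terse).
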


\begin{proof}
We will show a cyclic chain of implications. For $(i) \Rightarrow (ii)$, 
suppose we have a tree-decomposition $(T,\mathcal{V})$ with root $r$ of finite adhesion that distributes $\Xi$.

We define
$$U_n = \bigcup_{t \in T^{\leq n}} V_{t}.$$
By property (T1) of a tree-decomposition, it is clear that 
$V(G) \subseteq \bigcup_{n \in \mathbb{N}} U_n$.
Since $\Xi$ is the interior of $(T,\cV)$, we also have $\Xi=\bigcup_{n \in \mathbb{N}} \partial U_n$ as desired.

Furthermore, each $U_n$ is slender:
Clearly, all vertices are isolated in $|G|$.
Additionally, $\partial U_n \setminus \partial U_{n-1}$ consists of at most one end for each part $V_t$ for $t \in T^n$ and hence all ends in $\partial U_n \setminus \partial U_{n-1}$ are isolated points of $U_n$.
Therefore, each $\overline{U_n}$ has Cantor-Bendixson rank at most $n+1$ by induction.

For $(ii) \Rightarrow (iii)$, suppose $V(G)$ is a countable union of slender vertex sets $U_n$ such that $\bigcup_{n \in \mathbb{N}} \partial U_n = \Xi$. Without loss of generality, the sequence of the $U_n$ is increasing. Write $X_n = \overline{U_n}$ and let $Y_0=X_0$ and $Y_{n+1} = X_{n+1} \setminus X_n$.
By assumption, each $Y_n$ has finite Cantor-Bendixson rank say $k_n$.
Recall that $Y_n^{(0)} := Y_n$ and $Y_n^{(i+1)}$ denotes the derived space of $Y_n^{(i)}$ for all $i \in \NN$.
Since $Y_n$ has rank $k_n$, we have $Y_n^{(k_n)} = \emptyset$.
Let $Z_{n,i}:= Y_n^{(i)} \setminus Y_0^{(i+1)}$ be the subset of $Y_n$ consisting of all elements that get deleted when forming $Y_n^{(i+1)}$ for  $0 \leq i \leq k_n-1$. We claim that
$$Z_{0,k_0-1},Z_{0,k_0-2},\ldots,Z_{0,0},Z_{1,k_1-1},Z_{1,k_1-2},\ldots,Z_{1,0},Z_{2,k_2-1},Z_{2,k_2-2},\ldots$$
is the desired $\sigma$-discrete expansion of $V(G) \cup \Xi$.

First of all, since $V(G) \cup \Xi = \bigcup_{n \in \mathbb{N}} Y_n$ and this union is disjoint, the above sequence has union $V(G) \cup \Xi$. By the definition of rank, it is also clear that all sets in the sequence are discrete. It remains to show that the union over finite initial segments is closed. Clearly, each such union is of the form 
$$Y= X_n \cup Z_{n+1,k_{n+1}-1} \cup \cdots \cup Z_{n+1,i} \subseteq  X_{n+1}$$
for some $i < k_{n+1}$, and this set is closed in $|G|$ as $X_{n+1}$ is closed in $|G|$ and $Y$ is closed in $X_{n+1}$ by the definition of the Cantor-Bendixson rank.

For $(iii)\Rightarrow(iv)$, let $(X'_n)_{n\in\NN}$ be a $\sigma$-discrete expansion for $V(G) \cup \Xi$.
Then we apply Theorem~\ref{thm:topology_to_td} for the closed sets $X_n:=\bigcup_{i \leq n}X'_i$ and the discrete sets $\Delta_n:=X'_n\cap\Omega(G)$ to obtain an upwards connected tree-decomposition of $G$ of finite adhesion into connected parts displaying $\Xi^\complement$ such that all ends from $\Xi = \bigcup_{n\in\NN} \Delta_n$, and hence all ends from the interior of $\mathcal{T}$ live in pairwise distinct parts.
In other words, this tree-decomposition realises $(\Xi^\complement,\Xi)$.

Next, it is clear that $(iv)$ implies $(i)$, which completes the proof.
\end{proof}

We have now all results in place to prove our main result Theorem~\ref{thm_main} from this paper, the following theorem contains even more equivalent properties:

\begin{thm}
\label{thm_mainrestate}
The following are equivalent for any connected graph $G$ with at least one end:
\begin{enumerate}
\item There is an upwards connected tree-decomposition of finite adhesion that represents $\Omega(G)$ such that all parts induce connected subgraphs.
    \item There is a tree-decomposition of finite adhesion that represents all ends in $\Omega(G)$.
    \item There is a tree-decomposition of finite adhesion that distinguishes all ends in $\Omega(G)$.
    \item There is a tree-decomposition of finite adhesion into ${\leq} 1$-ended parts. 
    \item Some subset $\Xi \subseteq \Omega(G)$ of ends can be distributed.
    \item $V(G)$ is a countable union of slender sets.
  \end{enumerate}
\end{thm}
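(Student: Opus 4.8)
The plan is to prove the cyclic chain $(1)\Rightarrow(2)\Rightarrow(3)\Rightarrow(4)\Rightarrow(5)\Rightarrow(6)\Rightarrow(1)$. The first five implications are short, and essentially all the work sits in the final implication $(6)\Rightarrow(1)$, which I expect to be the main obstacle.

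For the short chain: $(1)\Rightarrow(2)$ merely forgets the extra structural properties, and $(2)\Rightarrow(3)$ holds since a bijective $f_\script{T}$ is in particular injective. For $(3)\Rightarrow(4)$, injectivity of $f_\script{T}$ says that distinct ends are sent to distinct nodes, which is exactly the assertion that each part houses at most one end. For $(4)\Rightarrow(5)$ I take $\Xi:=f_\script{T}^{-1}[V(T)]$; this is by definition the interior of $\script{T}$, and the ${\leq}1$-endedness of the parts makes $f_\script{T}\restrict\Xi\to V(T)$ injective, so $\script{T}$ distributes $\Xi$. Finally $(5)\Rightarrow(6)$ is precisely the implication $(i)\Rightarrow(ii)$ of Theorem~\ref{itemDistr}.

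For $(6)\Rightarrow(1)$, write $V(G)=\bigcup_{n}U_n$ with each $U_n$ slender and put $\Xi:=\bigcup_n\partial U_n$. Then hypothesis $(ii)$ of Theorem~\ref{itemDistr} holds for this $\Xi$, so its implication $(ii)\Rightarrow(iv)$ hands me an upwards connected tree-decomposition $\script{T}=(T,\cV)$ of finite adhesion with connected parts that realises $(\Xi,\Xi^\complement)$: it distributes $\Xi$ and displays $\Xi^\complement$. Since $f_\script{T}\restrict\Xi\to V(T)$ is injective while $f_\script{T}\restrict\Xi^\complement\to\Omega(T)$ is a bijection onto the disjoint target $\Omega(T)$, the full map $f_\script{T}$ is injective; thus $\script{T}$ already distinguishes $\Omega(G)$. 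The remaining task — and the crux of the whole theorem — is to promote this injection to a bijection, i.e.\ to arrange that the distribution onto $V(T)$ is also surjective, so that every node of $T$ houses an end.

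To do this I would prune $\script{T}$ along the tree order, eliminating the nodes that house no end. Such nodes are of two benign kinds. If the connected subgraph $H_e$ below a node is rayless, then no end of $G$ lives at or below that node, and its rayless, finite-adhesion subtree may be absorbed into the parent part. If instead a rooted ray of $T$ carries no branching and houses no end along it, then it runs towards a single end $\omega\in\Xi^\complement$ and the connected finite-adhesion subgraph it covers has $\omega$ as its only end; collapsing this subgraph to a single new part lets $\omega$ \emph{live} there, moving it from the displayed set into the distributed set. Applying these absorptions and collapses — together with contracting the remaining end-less branch nodes into their parents — removes every end-less node. The points I expect to be delicate are that each operation preserves upwards connectedness, connectedness of the parts and finite adhesion, and that the procedure terminates without spawning new end-less structure indefinitely; here the finite Cantor--Bendixson ranks furnished by slenderness are exactly what bound the lengths of the end-less segments and drive the termination. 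Once no end-less node survives, $f_\script{T}$ is a bijection onto $V(T)\cup\Omega(T)$, so the pruned $\script{T}$ represents $\Omega(G)$ with all the extra properties demanded by $(1)$.
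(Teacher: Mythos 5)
Your reduction to Theorem~\ref{itemDistr} follows the paper exactly: the chain $(1)\Rightarrow\dots\Rightarrow(5)$, the step $(5)\Rightarrow(6)$ via $(i)\Rightarrow(ii)$, and the use of $(ii)\Rightarrow(iv)$ to obtain an upwards connected realising tree-decomposition $\script{T}$ are all as in the paper's proof. The gap lies in your pruning procedure for promoting the resulting injection to a bijection. The only operation you describe that moves a displayed end into a part is the collapse of an end-less rooted ray of $T$ that \emph{carries no branching}; but the end-less region of $T$ may be a full binary tree of branch nodes. Concretely, take $G=T_2$ and $U_n=T_2^{\leq n}$: then $\Xi=\bigcup_n\partial U_n=\emptyset$, the realising tree-decomposition displays the whole Cantor set $\Omega(T_2)$ and distributes nothing, and every node of $T$ is end-less and branching. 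Neither of your two ``benign kinds'' ever applies, and repeatedly contracting end-less branch nodes into their parents does not terminate (it would collapse $T$ to a single part). Your appeal to the Cantor--Bendixson ranks supplied by slenderness does not rescue this: in the example every $\overline{U_n}$ has rank $1$, yet the end-less segments of $T$ are infinite, so slenderness does not bound their length.

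The repair is the content of the paper's Lemma~\ref{lem_represent}. One first contracts a \emph{maximal disjoint family} $\cR$ of end-less rays of $T$, with no restriction on branching: since $\script{T}$ displays its boundary, each $R\in\cR$ converges to an end of $T$ corresponding to a unique end of $G$, which after the contraction lives in the amalgamated part $\bigcup_{t\in R}V_t$ --- this is the step that converts displayed ends into distributed ones. By maximality of $\cR$, every ray of the contracted tree now meets infinitely many end-housing nodes, so the surviving end-less nodes can be partitioned among \emph{rayless} subtrees each containing exactly one end-housing node; contracting these finishes the construction in a single further round, with no iteration and hence no termination issue. Everything in your write-up before the pruning step is correct and coincides with the paper's argument.
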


\begin{proof}
The implications $(1) \Rightarrow (2) \Rightarrow (3) \Rightarrow (4) \Rightarrow (5)$  are trivial. The implication $(5) \Rightarrow (6)$ follows from $(i) \Rightarrow (ii)$ in Theorem~\ref{itemDistr}. Finally, for $(6) \Rightarrow (1)$ note that due to $(ii) \Rightarrow (iv)$ in Theorem~\ref{itemDistr}, we immediately get from $(6)$ that there is an upwards connected tree-decomposition of finite adhesion into connected parts that realises $(\Xi,\Xi^\complement)$. But then it follows from the subsequent Lemma~\ref{lem_represent} that there also is such a tree-decomposition $\mathcal{T}'$ that represents some partition $(\Xi',\Psi')$ of $\Omega(G)$ with $\Xi \subseteq \Xi'$, and so $\mathcal{T}'$ represents all ends in $\Omega(G)$ as desired.
\end{proof}

\begin{lemma} \label{lem_represent}
If a connected graph $G$ with at least one end admits a tree-decomposition $\mathcal{T}$ of finite adhesion 
 that realises some partition $(\Xi,\Psi)$ of  $\Omega(G)$, then there also is such a tree-decomposition $\mathcal{T}'$ that represents some partition $(\Xi',\Psi')$ of $\Omega(G)$ with $\Xi \subseteq \Xi'$.
 
 Moreover, whenever $\mathcal{T}$ has connected parts or is upwards connected, we can obtain the same for $\mathcal{T}'$.
\end{lemma}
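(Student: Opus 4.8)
The plan is to repair the single defect of a realising tree-decomposition. If $\mathcal{T}=(T,\cV)$ realises $(\Xi,\Psi)$, then $f_{\mathcal{T}}$ maps $\Psi$ bijectively onto $\Omega(T)$ and $\Xi$ \emph{injectively} into $V(T)$, but the latter map need not be onto: some parts are \emph{empty}, in the sense that no end of $G$ lives in them. To represent a partition I must turn this injection into a bijection, i.e.\ arrange that every part captures exactly one end, and I am free to enlarge the interior from $\Xi$ to some $\Xi'\supseteq\Xi$ in order to do so.

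The clean tool for removing empty parts is a contraction. Rooting $T$ and calling a node \emph{non-empty} if an end of $\Xi$ lives in it, I would contract every empty node into its nearest non-empty ancestor, so that the clusters are indexed by the non-empty nodes. A cluster merges a non-empty node $s$ with the maximal all-empty subtrees hanging below it, and one checks—using finite adhesion and Theorem~\ref{thm_direction}, exactly as in the surjectivity argument of Lemma~\ref{lem_upwardsconndisplay}—that the only end living in the merged part is the one living at $s$, \emph{provided} those empty subtrees are rayless. Under this proviso there is no all-empty ray, so every ray $R_\psi$ of $T$ retains infinitely many non-empty nodes (hence infinitely many uncontracted edges) and survives as an end of the quotient tree $T'$. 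Thus $f_{\mathcal{T}'}$ becomes a bijection and $\mathcal{T}'$ represents the (unchanged) partition; finite adhesion is inherited because the surviving separators are original ones, and merged parts remain connected.

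The main obstacle is precisely this proviso: an empty node may lie on an \emph{all-empty ray}, a branch of $T$ consisting entirely of empty nodes and converging to a displayed end. Contracting such a branch would hand a second end to an ancestor, and these branches genuinely occur and cannot be cut away locally when the displayed ends accumulate along them (already the binary tree, whose decomposition $V_t=\lceil t\rceil$ from Theorem~\ref{thm_Psi=Omega} realises $(\emptyset,\Omega)$, has every ray all-empty). The remedy is to enlarge $\Xi$ \emph{before} contracting: I would first construct a realising tree-decomposition for a larger partition $(\Xi',\Psi')$ that is \emph{efficient}, meaning that every part still having ends beyond it captures one interior end. Then the only empty parts are those whose entire subtree is end-free; such subtrees are rayless, and the contraction of the previous paragraph applies verbatim, yielding a representing $\mathcal{T}'$.

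To build such an efficient decomposition I would re-run the construction behind Theorem~\ref{thm:topology_to_td}, that is Lemma~\ref{lem:finite_adhesion_sequence}, but choose the discrete layers adaptively: at each stage, for every current component $D$ that still has ends in its boundary, I promote a single end $\omega_D\in\partial D$ to the interior. Distinct such components lie in different components of $G$ minus the current finite separator, so the chosen ends $\{\omega_D\}$ form a discrete set and are a legitimate layer; interleaving a $\sigma$-discrete expansion of the original $\Xi$ (which exists by Theorem~\ref{itemDistr}) ensures $\Xi\subseteq\Xi'$. By design every part with ends beyond it captures exactly one promoted end, so the decomposition is efficient, homeomorphically displays $\Psi'=\Omega(G)\setminus\Xi'$, and is upward connected with connected parts; since the contraction preserves these last two properties, the ``moreover'' follows at once (indeed the output is always of this form). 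The technical heart, where I expect the real work, is checking that the adaptively chosen layers assemble into a genuine finite-adhesion decomposition distributing $\Xi'$—equivalently, that the promoted set is distributable with $G_\delta$ complement—for which I would lean on the already established equivalences in Theorems~\ref{itemDistr} and~\ref{thm:display_sets}.
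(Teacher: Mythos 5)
Your diagnosis of the obstruction is right---empty parts must be absorbed, and the only problematic ones are those lying on all-empty rays of $T$---but you miss the one observation that makes the lemma elementary, and the replacement you offer leaves its hard step unproved. Since $\mathcal{T}$ \emph{realises} $(\Xi,\Psi)$, it in particular \emph{displays} $\Psi$, so every end of $T$ corresponds to exactly one end of $G$. Hence an all-empty ray $R$ of $T$ need not be contracted \emph{into an ancestor} (which, as you say, would hand that ancestor a second end); it can be contracted \emph{to a single new node}, whose part $\bigcup_{t\in R}V_t$ then has exactly one end of $G$ living in it, namely the end displayed at the end of $T$ containing $R$. This is the paper's entire proof: contract a maximal family of disjoint all-empty rays (enlarging $\Xi$ to $\Xi'$ by precisely the ends so captured), note that by maximality every ray of the quotient tree now meets infinitely many non-empty nodes, and then contract a partition of the quotient tree into (necessarily rayless) subtrees each containing exactly one non-empty node. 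Finite adhesion, connected parts and upwards connectedness all survive because only subtrees are contracted. No new topological construction is needed.

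Your alternative---rebuilding the decomposition by re-running Lemma~\ref{lem:finite_adhesion_sequence} with ``adaptively promoted'' ends---has a genuine gap exactly where you flag it. That lemma takes the closed filtration $X_0\subseteq X_1\subseteq\cdots$ and the discrete layers $\Delta_n\subseteq\Xi_n\setminus\bigcup_{i<n}\Xi_i$ as \emph{input}, fixed before the graphs $G_n$ are built; choosing $\Delta_n$ from the components of $G-G_{3n}$ mid-construction means the promoted ends are not contained in $\bigcup_n X_n=V(G)\cup\Xi$ as the hypotheses require. You would therefore have to augment the filtration on the fly and re-prove that each augmented set remains closed, that $V(G)\cup\Xi'$ is still $F_\sigma$ with the required $\sigma$-discrete structure, and that the interleaving with the expansion of the original $\Xi$ keeps $\Xi'$ distributable. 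None of this is carried out, and the closedness of a layer consisting of one boundary end per component of an \emph{infinite}-order separator needs an actual argument (an end can lie in the closure of many such components). The statement you defer to---``the promoted set is distributable with $G_\delta$ complement''---is essentially as strong as the lemma you are trying to prove, whereas the contraction argument disposes of it with no new machinery.
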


\begin{proof}
Suppose we are given a tree-decomposition $(T,\cV)$ of finite adhesion realising some partition $(\Xi,\Psi)$ of  $\Omega(G)$. We will perform two rounds of contractions on $T$ to make sure that we represent some partition $(\Xi',\Psi')$ of $\Omega(G)$ with $\Xi \subseteq \Xi'$.

First, pick a maximal family $\cR$ of disjoint rays in $T$ such that no end of $G$ lives in a part corresponding to one of the nodes of a ray in $\cR$. 
Then consider a new tree-decomposition $(\dot{T},\dot{\cV})$ where $\dot{T}$ is obtained from $T$ by contracting each ray in $\cR$. For every $R \in \cR$ we define a corresponding part $\dot{V}_R = \bigcup_{t \in R} V_t$.
Since the set of separators of $(\dot{T},\dot{\cV})$ is a subset of the set of separators of $(T,\cV)$, it follows that also $(\dot{T},\dot{\cV})$ has finite adhesion.
And since by assumption on $(T,\cV)$ there corresponds precisely one end of $G$ to any ray $R \in \cR$, it follows that $(\dot{T},\dot{\cV})$ realises $(\Xi',\Psi')$ where
$\Xi'$ is the union of $\Xi$ together with all ends of $G$ that correspond to a ray in $\cR$, and $\Psi'$ is its complement.  

Next, note that by maximality of $\cR$, every ray of $\dot{T}$ contains infinitely many nodes whose corresponding parts in $\dot{\cV}$ contain an end of $G$. Therefore, if we pick any partition $\cP$ of $V(\dot{T})$ into subtrees such that each subtree $P$ contains a unique node for which there is an end $\omega_P$ of $G$ living in the corresponding part of $\dot{\cV}$, then all $P \in \cP$ are necessarily rayless.

Now consider a new tree-decomposition $(T',\cV')$ where $T'$ is obtained from $\dot{T}$ by contracting each subtree in $\cP$. Naturally, $V(T') = \cP$, and for each $P \in V(T')$ we define $V'_P = \bigcup_{t \in P} \dot{V}_t$.
Since $\cT'$ arises from $\cT$ by contracting subtrees, it is clear that $\cT'$ has finite adhesion, connected parts, or is upwards connected if the same is true for $\cT$.
Lastly, $(T',\cV')$ now represents the partition $(\Xi',\Psi')$, as in each part $V'_P$ there lives precisely the single end $\omega_P$ from $\Xi'$, and since all $P$ were rayless and $(\dot{T},\dot{\cV})$ displays $\Psi'$, also $(T',\cV')$ displays $\Psi'$.
\end{proof}

\begin{cor}
\label{lem_bij}
If a connected graph $G$ with at least one end admits a rayless tree-decomposition $\mathcal{T}$ of finite adhesion that distributes $\Omega(G)$, then there also is such a tree-decomposition that bijectively distributes $\Omega(G)$.
Moreover, whenever $\mathcal{T}$ has connected parts or is upwards connected, we can obtain the same for $\mathcal{T}'$.
\end{cor}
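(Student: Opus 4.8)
The plan is to obtain this as a direct specialisation of Lemma~\ref{lem_represent}; essentially all the content is already packaged there, and the task reduces to matching up the definitions correctly. Concretely, I would show that the hypothesis of the corollary is exactly the hypothesis of Lemma~\ref{lem_represent} applied to the trivial partition $(\Omega(G),\emptyset)$, and then read off the conclusion.

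First I would note that distributing $\Omega(G)$ means the interior $f_{\mathcal{T}}^{-1}[V(T)]$ of $\mathcal{T}$ equals $\Omega(G)$, so its boundary $f_{\mathcal{T}}^{-1}[\Omega(T)]$ is empty. Because $T$ is rayless we moreover have $\Omega(T)=\emptyset$, so the empty restriction of $f_{\mathcal{T}}$ to the boundary is (vacuously) a bijection onto $\Omega(T)$; that is, $\mathcal{T}$ displays $\emptyset$. In the terminology of the paper this says that $\mathcal{T}$ \emph{realises} the partition $(\Omega(G),\emptyset)$ of $\Omega(G)$, which is precisely what Lemma~\ref{lem_represent} takes as input. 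I would stress that raylessness is used exactly once here, namely to secure $\Omega(T)=\emptyset$ so that ``displays $\emptyset$'' holds.

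Next I would invoke Lemma~\ref{lem_represent} with $(\Xi,\Psi)=(\Omega(G),\emptyset)$ to produce a tree-decomposition $\mathcal{T}'$ of finite adhesion representing some partition $(\Xi',\Psi')$ of $\Omega(G)$ with $\Omega(G)=\Xi\subseteq\Xi'$. Since trivially $\Xi'\subseteq\Omega(G)$, this forces $\Xi'=\Omega(G)$ and $\Psi'=\emptyset$. By definition, representing $(\Omega(G),\emptyset)$ means that $\mathcal{T}'$ bijectively distributes $\Omega(G)$ and displays $\emptyset$; the latter yields $\Omega(T')=\emptyset$, so $\mathcal{T}'$ is again rayless, as required for ``such a tree-decomposition''. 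The \emph{moreover} clause about connected parts and upwards connectedness is inherited verbatim from the corresponding clause of Lemma~\ref{lem_represent}. I expect no genuine obstacle: the substance (the two rounds of contractions, and in particular the choice of a partition of the decomposition tree into rayless subtrees each carrying a single end) lives inside Lemma~\ref{lem_represent}, and the only points needing care are the bookkeeping that raylessness upgrades ``distributes $\Omega(G)$'' to ``realises $(\Omega(G),\emptyset)$'' and the observation that $\Xi\subseteq\Xi'$ together with $\Xi=\Omega(G)$ pins down $\Xi'=\Omega(G)$, thereby turning the distribution into a bijective one.
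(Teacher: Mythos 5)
Your proposal is correct and matches the paper's intended derivation: the corollary is stated without proof immediately after Lemma~\ref{lem_represent} precisely because it is the specialisation $(\Xi,\Psi)=(\Omega(G),\emptyset)$ of that lemma, which is exactly what you carry out. Your bookkeeping is accurate on both delicate points --- raylessness of $T$ is what makes ``boundary empty'' into ``displays $\emptyset$'' (so that the hypothesis becomes ``realises $(\Omega(G),\emptyset)$''), and conversely ``$\mathcal{T}'$ displays $\emptyset$'' forces $\Omega(T')=\emptyset$, so the output is again rayless.
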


\section{Tree-decompositions distributing all ends}
In the previous section we stated a topological characterisation for the sets of ends that can be distributed. If we are interested in distributing all ends of $G$, we can obtain a combinatorial characterisation in terms of the underlying graph.

The following is a convenient description of the Cantor-Bendixson rank of the space $V \cup \Omega(G) \subseteq |G|$ due to Jung \cite[\S3]{jung1969wurzelbaume}: The \emph{rank} $r(x)$ of a vertex or an end $x$ in a graph $G=(V,E)$ is defined as follows: all vertices have rank $0$.
An end $\omega$ has rank $1$, if there is a finite set $S \subseteq V$, such that $\hat{C}(S,\omega)$ contains no other end.
For an ordinal $\alpha$, we say an end $\omega$ has rank $\alpha$, if it has not already been assigned a smaller rank and if there is a finite set $S \subseteq V$ such that all ends in $\hat{C}(S,\omega)$ have been assigned a rank, and all these ranks are strictly smaller than $\alpha$.

For a graph $G$ in which every end has a rank (i.e.\ for graphs where $V \cup \Omega(G)$ is scattered), we define the \emph{end-rank} $r(G)$ as the supremum of the ranks of all points in $V\cup\Omega(G)$.\footnote{We remark that in this formulation, $r(G)$ and the Cantor-Bendixson rank of $V \cup \Omega(G)$ may differ by $\pm 1$.}

\begin{thm}\label{thm:distributeallends}
The following are equivalent for any connected graph $G$:
\begin{enumerate}[label=(\roman*)]
\item There is an upwards connected rayless tree-decomposition of finite adhesion with connected parts distributing $\Omega(G)$.
    \item There is a tree-decomposition of finite adhesion distributing $\Omega(G)$.
    \item $V \cup \Omega(G)$ has a $\sigma$-discrete expansion.
    \item\label{item:binary_tree} $G$ contains no end-faithful subdivision of the full binary tree $T_2$.
    \item Every end of $G$ has a rank, i.e.\ $\Omega(G)$ is scattered.
 
    \end{enumerate}
Moreover, if $\Omega(G) \neq \emptyset$, we may add    
\begin{enumerate}[label=(\roman*), resume]    
   \item There is an upwards connected rayless tree-decomposition of finite adhesion with connected parts bijectively distributing $\Omega(G)$.
\end{enumerate}
\end{thm}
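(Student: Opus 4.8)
The plan is to obtain the backbone (i)--(iii) directly from Theorem~\ref{itemDistr} applied with $\Xi=\Omega(G)$, and then to attach the combinatorial conditions (iv) and (v). The implication (i)$\Rightarrow$(ii) is immediate, and (ii)$\Leftrightarrow$(iii) is exactly the equivalence of items (i) and (iii) of Theorem~\ref{itemDistr} in the case $\Xi=\Omega(G)$. For (iii)$\Rightarrow$(i) I would feed the $\sigma$-discrete expansion into item (iv) of Theorem~\ref{itemDistr} to obtain an upwards connected tree-decomposition with connected parts realising $(\Omega(G),\emptyset)$; since it is upwards connected it homeomorphically displays its boundary by Lemma~\ref{lem_upwardsconndisplay}, and because that boundary is empty we get $\Omega(T)=\emptyset$, i.e.\ the decomposition tree $T$ is rayless. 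Hence (i) holds, and (i)--(iii) are equivalent.

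Next I would link this backbone to scatteredness. For (i)$\Rightarrow$(v), recall that a rayless tree is well-founded, so its nodes carry an ordinal rank (leaves have rank~$0$, and a node's rank is the supremum of its children's ranks plus one). Given a rayless tree-decomposition distributing $\Omega(G)$, every end lives in a unique part, and I would show by induction on the tree-rank of $t$ that every end living in a part indexed at or below $t$ is assigned a Jung rank, bounded in terms of the tree-rank of $t$; here finite adhesion is what turns ``living below $t$'' into the finite sets $S$ required in the definition of the end-rank. This gives (v). Conversely, (v)$\Rightarrow$(iv) is soft: the ends of the binary tree $T_2$ form a Cantor set and hence a nonempty perfect space, so an end-faithful subdivision of $T_2$ in $G$ would embed a nonempty perfect set as the closed subset $\partial H$ of $\Omega(G)$; a scattered space has no nonempty perfect subspace, so no such subdivision exists.

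The main work, and the step I expect to be the principal obstacle, is the converse (iv)$\Rightarrow$(v), which I would prove contrapositively: from a non-scattered $\Omega(G)$ I would build an end-faithful subdivision of $T_2$. Let $P\subseteq\Omega(G)$ be the perfect kernel (the eventual value of the Cantor--Bendixson derivative), which is nonempty and perfect. I would recursively assign to each node $s\in\{0,1\}^{<\omega}$ a finite separator together with a region of $G$ whose boundary meets $P$ in a set that is again perfect, using perfectness of $P$ to split each region into two by a finite separator (two ends of $P$ in the region are separated by a finite set, and both sides still capture a perfect chunk of $P$). Connecting successive separators by paths in $G$, obtained from connectivity and the Star--Comb Lemma~\ref{lem_starcomb} and cleaned up via envelopes (Corollary~\ref{cor:envelope}), produces a subdivided binary tree $H$; each branch is a direction and hence, by Theorem~\ref{thm_direction}, converges to a single end, distinct branches yielding distinct ends because they are eventually separated by one of the finite separators, and the finite separators guarantee that rays equivalent in $G$ stay equivalent in $H$, i.e.\ end-faithfulness. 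To close the cycle I would then prove (v)$\Rightarrow$(iii): from the end-rank I would stratify $V(G)\cup\Omega(G)$ into countably many closed pieces of finite Cantor--Bendixson rank (equivalently, $V(G)$ into slender sets) and then re-layer each finite-rank piece from the top down, exactly as in the proof of (ii)$\Rightarrow$(iii) in Theorem~\ref{itemDistr}, to obtain a $\sigma$-discrete expansion. The delicate point here is that the stratification must be genuinely countable, so that the scattered structure of an \emph{end} space (as opposed to an arbitrary scattered space such as $\QQ$, which is $\sigma$-discretely expandable yet not scattered) is where the argument has to do real work.

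Finally, for the addendum (vi) in the case $\Omega(G)\neq\emptyset$: item (i) already furnishes an upwards connected rayless tree-decomposition with connected parts distributing $\Omega(G)$, and Corollary~\ref{lem_bij} upgrades it to one that \emph{bijectively} distributes $\Omega(G)$ while preserving connectedness of parts, upwards connectedness, and raylessness; this yields (vi), and conversely (vi)$\Rightarrow$(ii) is trivial.
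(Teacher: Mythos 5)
Your overall routing is sound and partly different from the paper's: the backbone (i)--(iii) via Theorem~\ref{itemDistr} with $\Xi=\Omega(G)$ is exactly what the paper does (including the observation that an empty boundary forces $\Omega(T)=\emptyset$, hence raylessness), and your addendum (vi) via Corollary~\ref{lem_bij} matches. Where you diverge: you attach (v) by a tree-rank induction on the rayless decomposition tree for (i)$\Rightarrow$(v), whereas the paper goes (iii)$\Rightarrow$(iv) by the soft observation that the end space of $T_2$ is an uncountable compact metric space and so admits no $\sigma$-discrete expansion; and you propose to \emph{prove} (iv)$\Leftrightarrow$(v), which the paper simply cites as Jung's Satz~4. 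Both of these alternative routes are workable in principle (your tree-rank induction needs the finite adhesion set below a node $t$ as the witnessing separator, and your perfect-kernel construction of a subdivided $T_2$ needs a careful end-faithfulness check, but neither is a wrong idea).

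The genuine gap is in (v)$\Rightarrow$(iii). You assert that scatteredness of $\Omega(G)$ lets you ``stratify $V(G)\cup\Omega(G)$ into countably many closed pieces of finite Cantor--Bendixson rank,'' and you correctly flag this as the delicate point --- but you never supply an argument, and this is precisely where all the work of the theorem lives. The claim is false for scattered spaces in general: the ordinal space $[0,\omega_1]$ is compact and scattered, yet by Baire category it is not a countable union of closed subspaces of finite rank. So the statement genuinely depends on the special structure of end spaces, and the paper proves it by a transfinite induction on the end-rank $\alpha$ of $G$: the rank-$\alpha$ ends $\Phi$ form a closed discrete set, one takes a connected envelope $U$ for $\Phi$ (Corollary~\ref{cor:envelope}), refines each component $P$ of $G-U$ by a rayless normal tree (Theorem~\ref{thm:RNT}) so that all remaining components have end-rank $<\alpha$, applies the induction hypothesis to each, and glues the resulting expansions, checking closedness of the initial segments by hand. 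Without this (or an equivalent) argument, the cycle of implications does not close, and the theorem is not proved. Everything else in your proposal is either correct or repairable; this step needs to be filled in with the envelope/rayless-normal-tree induction or some substitute of comparable strength.
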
 

\begin{proof}
$(i) \Leftrightarrow (ii) \Leftrightarrow (iii)$ is a special case of Theorem~\ref{itemDistr}.\\
For the implication $(iii) \Rightarrow (iv)$ note that any subspace of $V \cup \Omega(G)$ inherits the property of having a $\sigma$-discrete expansion. However, the end space of a binary tree does not have a $\sigma$-discrete expansion: Indeed, any discrete set in a compact metric space is just countable; but the end space of a binary tree is uncountable, so not a countable union of countable sets. 

The equivalence $(iv) \Leftrightarrow (v)$ is the content of Jung's \cite[Satz~4]{jung1969wurzelbaume}. 

We prove $(v) \Rightarrow (iii)$ by transfinite induction on the end-rank $\alpha$ of $G$.
In the base case $r(G) = 0$, i.e.\ when $\Omega(G) = \emptyset$, we may take the trivial expansion consisting just of the vertex set.

Now let $\alpha > 0$, and suppose that all graphs of rank $<\alpha$ admit a $\sigma$-discrete expansion. First, let $\Phi \subseteq \Omega(G)$ consist of all ends of rank $\alpha$. Clearly, $\Phi$ is a closed discrete subset of $\Omega(G)$.
By Corollary~\ref{cor:envelope}, there is a connected envelope $U$ for $\Phi$, i.e.\ $U$ is a connected set of vertices in $G$ of finite adhesion such that $\partial U = \Phi$. Write $\mathcal{P}$ for the collection of components of $G-U$, and note that for each $P \in \mathcal{P}$, all ends living in $P$ have rank $< \alpha$.

Now for each component $P \in \mathcal{P}$ individually, consider a collection $\mathcal{C}_P = \{ C_P(S_{\omega},\omega):\omega \in \Omega(P) \}$ such that each set $C_P(S_{\omega},\omega)$ witnesses the rank of $\omega$ inside the graph $P$. By Theorem~\ref{thm:RNT}, there is a rayless normal tree $N_P$ in $P$ such that every component $D$ of $P-N_P$ is included in an element of $\mathcal{C}_P$ and hence satisfies $r(D) < \alpha$. Note that $U' = U \cup \bigcup_{P \in \mathcal{P}} N_P$ also is an envelope for $\Phi$, but now, writing $\mathcal{P}'$ for the collection of components of $G-U'$, we have $r(D)<\alpha$ for every $D \in \mathcal{P}'$.
By induction assumption, each $D \in \mathcal{P}'$  admits a $\sigma$-discrete expansion 
$$V(D) \cup \Omega(D) = \bigcup_{n \geq 1} X_{D,n}.$$ 
Then $X_0:=\overline{U'}=U'\cup\Phi$ together with 
$$X_n := \bigcup_{D \in \mathcal{P}'} X_{D,n}$$
for $n\geq 1$ gives the desired $\sigma$-discrete expansion of $V \cup \Omega(G)$. Indeed, to see that $X_0 \cup X_1 \cup \cdots \cup X_n$ is closed for every $n \in \NN$, note that every end $\omega$ of $G$ outside of this set lives in some component $D$ for $D \in \mathcal{P}'$.
Let $S\subseteq V(D)$ be finite such that $\hat{C}_D(S,\omega)$ is a basic open set inside $V(D)\cup\Omega(D)$ separating $\omega$ from the closed set $X_{D,1} \cup \cdots \cup X_{D,n}$.
But then $\hat{C}_G(S\cup N(D),\omega)$ is a basic open neighbourhood of $\omega$ in $V\cup\Omega(G)$ witnessing that $\omega$ does not belong to the closure of $X_0 \cup X_1 \cup \cdots \cup X_n$. This completes the induction step and the proof of $(v) \Rightarrow (iii)$.

Finally, the moreover part $(i) \Leftrightarrow (vi)$ is immediate from Corollary~\ref{lem_bij}.
\end{proof}

Using different methods, Polat showed that $\Omega(G)$ has a $\sigma$-discrete expansion if and only if every end of $G$ has a rank \cite[Theorem~8.11]{polat1996ends2}. 

\section{Applications}
\label{sec_applications}

\subsection{Tree-decompositions displaying special subsets of ends}

Through our main characterisation, we can now give a short proof of the main result from Carmesin's \cite{carmesin2014all}.
\begin{thm}
\label{thm_carmensin_Gdelta}
Every connected graph $G$ has a tree-decomposition of finite adhesion with connected parts that displays precisely the undominated ends of $G$.
\end{thm}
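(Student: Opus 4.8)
The plan is to exhibit the set $\Psi$ of undominated ends of $G$ as a $G_\delta$-subset of $|G|$ and then invoke Theorem~\ref{thm:display_sets}: by the implication $(\ref{item_thm:display_sets_5}) \Rightarrow (\ref{item_thm:display_sets_1})$ there, a $G_\delta$ set of ends is homeomorphically displayed by an upwards connected tree-decomposition of finite adhesion with connected parts, which in particular displays $\Psi$ with connected parts, as the theorem demands. So everything reduces to verifying that $\Psi$ is $G_\delta$.

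To this end I would fix any vertex $v$ and let $B_n := B_n(v)$ be the closed ball of radius $n$ around $v$ in the length space $G$, so that $G = \bigcup_{n\in\NN} B_n$ and each $O_n := |G| \setminus \overline{B_n}$ is open in $|G|$. Since every point of $G$ lies in some $B_n$, the intersection $\bigcap_{n} O_n$ contains no vertex and no inner edge point, hence $\bigcap_n O_n \subseteq \Omega(G)$. The heart of the matter is the claim that for an end $\omega$,
\[ \omega \text{ is dominated} \iff \omega \in \overline{B_n} \text{ for some } n\in\NN. \]
Granting this, $\Psi = \Omega(G) \setminus \bigcup_n \overline{B_n} = \bigcap_n O_n$ is $G_\delta$ and the theorem follows. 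For these end-closures I would freely use the comb characterisation $\omega \in \partial U \iff$ there is a comb attached to $U$ with spine in $\omega$, applied to the vertex set of those vertices within distance $n$ of $v$ (which determines $\overline{B_n}\cap\Omega(G)$).

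For the forward direction, suppose a vertex $x$ at distance $m$ from $v$ dominates $\omega$, witnessed by a subdivided star with centre $x$ and infinitely many leaves on a ray $R \in \omega$ chosen to avoid $x$. Deleting $x$ turns the star paths into pairwise disjoint paths from the distinct neighbours of $x$ to $R$; truncating each from the $R$-end at its first vertex inside $B_{m+1}$ produces infinitely many pairwise disjoint teeth with pairwise distinct feet in $B_{m+1}$, i.e.\ a comb attached to $B_{m+1}$ with spine $R$. Hence $\omega \in \overline{B_{m+1}}$.

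The backward direction, that a comb attached to some $B_n$ with spine in $\omega$ forces $\omega$ to be dominated, is where the work lies, and I would prove it by induction on $n$. After truncating teeth so that each meets $B_n$ only in its foot, either infinitely many feet already lie in $B_{n-1}$ and the induction hypothesis applies at once, or I may pass to infinitely many teeth $D_i$ whose feet $u_i$ lie at distance exactly $n$; then I pick a neighbour $w_i$ of $u_i$ one step closer to $v$, so $w_i \in B_{n-1}$. Now the decisive dichotomy: either some $w$ occurs as $w_i$ for infinitely many $i$, in which case the paths $w u_i D_i$ are internally disjoint $w$--$R$ paths (the teeth $D_i$ avoid $B_{n-1}\ni w$ since they meet $B_n$ only in feet at distance $n$), so $w$ dominates $\omega$; or infinitely many $w_i$ are distinct, in which case the extended teeth $D_i \cup \{u_i w_i\}$ are pairwise disjoint $R$--$B_{n-1}$ paths meeting $B_{n-1}$ only in their feet, exhibiting a comb attached to $B_{n-1}$ and letting the induction hypothesis finish. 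The base case $n=1$ always lands in the first alternative, as $B_0=\{v\}$ cannot host infinitely many distinct feet, and there $v$ itself dominates $\omega$. The main obstacle is precisely this ``bottleneck-versus-spread'' induction, whose crux is to maintain genuine disjointness of the teeth at each stage, since it is disjointness of teeth (equivalently, distinctness of feet) that converts a star at the bottleneck vertex into a bona fide domination witness.
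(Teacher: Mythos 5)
Your proposal is correct and follows essentially the same route as the paper: reduce via Theorem~\ref{thm:display_sets} to showing that the undominated ends are $G_\delta$, witness this with the distance balls $B_n(v)$, and prove that an end lies in some $\overline{B_n}$ if and only if it is dominated. The only difference is cosmetic: your backward direction is organised as an induction on $n$ with a ``repeated neighbour versus many distinct neighbours'' dichotomy, whereas the paper takes a minimal $n$ and applies the same pigeonhole argument to reach a contradiction.
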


\begin{proof}
Let $\Xi$ be the set of all ends of $G$ which are dominated. By Theorem \ref{thm:display_sets} it suffices to show that $\Omega(G)\setminus\Xi$ is a $G_\delta$ set in $|G|$, and by Lemma \ref{lem_Fsigma} it is equivalent to show that $V(G)\cup\Xi$ is an $F_\sigma$ set in $|G|$. Choose an arbitrary vertex $u\in V(G)$ and for all $n\in\NN$ write $X_n$ for the set of all vertices of $G$ with distance at most $n$ to $u$.
We show that $V(G)\cup\Xi=\bigcup_{n\in\NN}\overline{X_n}$.
We have $V(G)=\bigcup_{n\in\NN}X_n$ because $G$ is connected.
It is left to show that the ends in $\bigcup_{n\in\NN}\overline{X_n}$ are precisely the dominated ends of $G$.

Consider any end $\omega\in\Omega(G)$ and let $R$ be an $\omega$-ray in $G$.
First, suppose that $\omega$ is dominated and let $v$ be the centre of an infinite subdivided star $S$ with leaves in $R$. 
Furthermore, suppose that $v\in X_n$.
Then $S-v$ is a comb attached to $N(v)\subseteq X_{n+1}$ and therefore $\omega$ is contained in $\overline{X_{n+1}}$.

Now assume for a contradiction that some $\overline{X_n}$ contains an undominated end $\omega$, and choose $n$ minimal with that property.
Then there is a comb $C$ attached to $X_n$ with spine $R \in \omega$.
By minimality of $n$, there is an infinite set $\cT$ of teeth of $C$ which lie in $X_n\setminus X_{n-1}$.
The neighbourhood of $\cT$ in $X_{n-1}$ is finite, again by minimality of $n$.
Since every vertex in $X_n\setminus X_{n-1}$ has a neighbour in $X_{n-1}$, there is vertex $v\in X_{n-1}$ with infinitely many neighbours in $\cT$.
Hence $\omega$ is dominated by $v$, a contradiction.
\end{proof}

The following generalises a corresponding result from \cite[Theorem~2]{burger2022duality}.

\begin{thm}
\label{thm_carljan_Gdelta}
For every infinite set of vertices $U$ in a connected graph $G$, there is a tree-decomposition of $G$ of finite adhesion that displays precisely the undominated ends of $\partial U$.
\end{thm}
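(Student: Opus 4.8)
The plan is to feed everything into the characterisation of displayable sets from Theorem~\ref{thm:display_sets}. Writing $\Psi$ for the set of undominated ends in $\partial U$, it suffices to show that $\Psi$ is $G_\delta$ in $|G|$. Let $D$ be the set of dominated ends of $G$ and set $\Xi := \Omega(G)\setminus\Psi = D\cup(\Omega(G)\setminus\partial U)$, since an end fails to lie in $\Psi$ precisely when it is dominated or lies outside $\partial U$. By Lemma~\ref{lem_Fsigma} it is then enough to prove that $V(G)\cup\Xi$ is $F_\sigma$ in $|G|$, and I would do this by exhibiting a countable family of closed sets whose union is exactly $V(G)\cup\Xi$, splitting the task according to the two pieces of $\Xi$.

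Fix a vertex $u$ and let $X_n := \{v : d_G(u,v)\le n\}$ be the distance balls, so $X_0\subseteq X_1\subseteq\cdots$ and $\bigcup_n X_n = V(G)$. From the proof of Theorem~\ref{thm_carmensin_Gdelta} I may use that $\overline{X_n}\cap\Omega(G)\subseteq D$ for each $n$ and that $\bigcup_n \overline{X_n}=V(G)\cup D$; thus the closed sets $\overline{X_n}$ already account for all vertices and all dominated ends, and catch no undominated end at all. It remains to cover $\Omega(G)\setminus\partial U$. To have finite separators even when $G$ is not locally finite, I would pass to an envelope $U^*$ of $U$, provided by Theorem~\ref{thm:envelope}, so that $U^*$ has finite adhesion and $\partial U^*=\partial U$. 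Every end $\omega\notin\partial U=\partial U^*$ then lives in a unique component $P$ of $G-U^*$, with $N(P)$ finite, so $\omega\in\partial P$. I now set $W_n := \bigcup\{P : P \text{ a component of } G-U^*,\ N(P)\subseteq X_n\}$. Since each $N(P)$ is finite, every such $P$ satisfies $N(P)\subseteq X_n$ for large $n$, whence $\bigcup_n W_n = V(G)\setminus U^*$ and every $\omega\in\Omega(G)\setminus\partial U$ lies in $\overline{W_n}$ for some $n$.

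The main step, and the point where undominatedness is genuinely used, is the claim that $\overline{W_n}\cap\Omega(G)\subseteq\Xi$ for every $n$; equivalently, that no undominated end of $\partial U$ is accumulated by $W_n$. Suppose $\omega\in\partial U$ lies in $\overline{W_n}$, so there is a comb attached to $W_n$ with spine $R\in\omega$. No single component $P$ can receive infinitely many teeth, since that would force $\omega\in\partial P$ against $\omega\in\partial U$; hence the teeth meet infinitely many distinct components $P$, each with $N(P)\subseteq X_n$. Because $\omega$ lives in none of these components, the spine $R$ enters and leaves each of them, and the comb paths reach them, only through the finite sets $N(P)\subseteq X_n$. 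Tracking these crossings yields infinitely many distinct vertices of $X_n$ that either lie on $R$ or are joined to $R$ by disjoint paths, so in either case $\omega\in\overline{X_n}$ and $\omega$ is therefore dominated. Consequently any end of $\partial U$ caught by $W_n$ lies in $D\subseteq\Xi$, while ends of $\overline{W_n}$ outside $\partial U$ lie in $\Xi$ trivially. I expect the clean bookkeeping of this crossing argument, in particular distinguishing whether a spine vertex already lies inside the relevant component, to be the only delicate point of the whole proof.

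Combining the two families gives $V(G)\cup\Xi = \bigcup_n \overline{X_n}\,\cup\,\bigcup_n \overline{W_n}$, a countable union of sets closed in $|G|$ (closures of vertex sets, whose only added points are their boundary ends), so $V(G)\cup\Xi$ is $F_\sigma$. By Lemma~\ref{lem_Fsigma} the set $G\cup\Xi$ is $F_\sigma$ as well, hence $\Psi=\Omega(G)\setminus\Xi$ is $G_\delta$ in $|G|$. Theorem~\ref{thm:display_sets} then produces a tree-decomposition of finite adhesion with connected parts that (homeomorphically) displays precisely $\Psi$, the set of undominated ends in $\partial U$, as required.
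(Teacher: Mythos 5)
Your proof is correct, but it takes a genuinely different route from the paper's. The paper also reduces to Theorem~\ref{thm:display_sets} via the $G_\delta$ criterion, but it gets there by first replacing $U$ with an envelope, contracting every component of $G-U$ to a single (finite-degree) vertex to obtain a contraction minor $H$, checking that this induces a domination-preserving bijection $\partial U\to\Omega(H)$ and a continuous surjection $f\colon|G|\to|H|$, and then pulling back the $G_\delta$ set of undominated ends of $H$ supplied by Theorem~\ref{thm_carmensin_Gdelta}. You instead work entirely inside $G$: you exhibit $V(G)\cup\Xi$ directly as an $F_\sigma$ set, using the distance balls $\overline{X_n}$ to absorb the dominated ends (re-using the key computation from the proof of Theorem~\ref{thm_carmensin_Gdelta}) and the sets $\overline{W_n}$ built from components of $G-U^*$ with small neighbourhoods to absorb the ends outside $\partial U$. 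The one step you flag as delicate — that an end of $\partial U$ in $\overline{W_n}$ must be dominated — does go through: since no single component $P$ with finite $N(P)$ can capture $\omega\in\partial U^*$ (else $C(N(P),\omega)=P$ would avoid $U^*$), the comb's teeth spread over infinitely many components, and whether the spine enters these components or the comb's paths cross into them from outside, each crossing contributes a fresh vertex of $N(P)\subseteq X_n$ on $R$ or on one of the disjoint paths, yielding a comb attached to $X_n$ and hence $\omega\in\overline{X_n}\subseteq V(G)\cup D$. What the paper's quotient argument buys is that this combinatorial bookkeeping is outsourced to Lemma~\ref{lem_torso} and to Theorem~\ref{thm_carmensin_Gdelta} applied as a black box in $H$; what your argument buys is that it avoids contraction minors and the continuity check for $f$ altogether, at the cost of the explicit crossing analysis, which you should write out in full (distinguishing trivial teeth, spine visits, and paths entering from outside) if you formalise this.
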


\begin{proof}
Without loss of generality, we may assume that $U$ has finite adhesion (Theorem~\ref{thm:envelope}).

Consider the contraction minor $H \preceq G$ obtained from $G$ by contracting each component $C$ of $G - U$ to a single vertex $v_C$ (of finite degree). 

\begin{clm}
\label{claim_jan1}
The inclusion $U \hookrightarrow H$ induces a bijection $\partial U \to \Omega(H)$ that preserves the property of being dominated.
\end{clm}

This claim is proven just like Lemma~\ref{lem_torso}.

\begin{clm}
\label{claim_jan2} The contractions resulting in $H$ induce a natural continuous surjection $f \colon |G| \to |H|$.
\end{clm}

To see that $f$ is continuous, consider some end $\omega \in |G|$. If $\omega \notin \partial U$, then $f(\omega) = v_C$ for some component $C$, and $f$ is continuous at $\omega$. If $\omega \in \partial U$, then $f(\omega) = \omega' \in \Omega(H)$ by Claim~\ref{claim_jan1}. Let $C_H(X',\omega')$ be an arbitrary basic open neighbourhood around $\omega'$ in $H$. Let $X \subseteq U$ be the finite set of vertices where we replace every vertex of the form $v_C$ in $X'$ by $N(C)$. It remains to verify that 
$$f[C_G(X,\omega)] \subseteq C_H(X',\omega').$$
But this is clear: for every $v \in C_G(X,\omega)$, any $v-\omega$-ray $R$ avoiding $X$ is mapped to a locally finite connected subgraph in $H$ avoiding $X'$ which includes an $f(v)-\omega'$-ray $R'$.

Now we apply Theorem~\ref{thm_carmensin_Gdelta} inside $H$ to see that there is a tree-decomposition of finite adhesion displaying the undominated ends $\Psi$ of $H$. Hence $\Psi$ is $G_\delta$ in $|H|$ by Theorem \ref{thm:display_sets}, say $\Psi = \bigcap_{n \in \NN} O_n$ with $O_n$ open in $|H|$. But then by Claim~\ref{claim_jan2},
$$f^{-1}(\Psi) = f^{-1}(\bigcap_{n \in \NN} O_n) = \bigcap_{n \in \NN} f^{-1}(O_n) $$ is $G_\delta$ in $|G|$. Thus $f^{-1}(\Psi)$ can be displayed by a tree-decomposition of finite adhesion of $G$, again by Theorem \ref{thm:display_sets}.
This completes the proof as $f^{-1}(\Psi)$ is the set of all undominated ends in $\partial U$ by Claim~\ref{claim_jan1}.
\end{proof}

\subsection{Counterexamples}
Consider the full infinite binary tree $T_2$, and let $X \subseteq \Omega(T_2)$ be any set of ends. A \emph{binary tree with tops $X$} is the graph with vertex set $T_2 \sqcup X$, all edges of $T_2$, and such that the neighbourhood of $x \in X$ consists of infinitely many nodes on its corresponding normal ray in $T_2$. 

We reobtain Carmesin's observation that a $T_2$ with uncountably many tops does not admit a tree-decomposition of finite adhesion displaying all its ends, but now with significantly shorter proof.

\begin{exmp}
No binary tree with uncountably many tops admits a tree-decomposition of finite adhesion displaying all its ends.
\end{exmp}
\begin{proof}
These graphs do not have normal spanning trees by \cite[Proposition~3.3]{diestel2001normal}, and so the result follows from Theorem~\ref{thm_Psi=Omega}.
\end{proof}

With only a little more work, we can prove the following stronger result by Carmesin \cite[p.7]{carmesin2014all}.

\begin{exmp}
No binary tree with uncountably many tops admits a tree-decomposition of finite adhesion  distinguishing all its ends.
\end{exmp}

\begin{proof}
Let $G$ be a binary tree with uncountably many tops. Suppose for a contradiction that $V(G)$ is a countable union of slender sets.
Then one of the slender sets $U$ contains uncountably many of the tops.
Write $\cR$ for the set of all normal rays of $T_2$ which have a corresponding top in $U$.
We call a vertex $v$ of $T_2$ \emph{good}, if it lies in uncountably many rays from $\cR$.
It is clear that the root of $T_2$ is good. We now show that for each good vertex $v$, there are two incomparable good vertices above $v$ in the tree-order:

Suppose not for a contradiction.
It is clear that at least one upper neighbour in $T_2$ of each good vertex is good. This implies that there is a ray $R$ of good vertices above $v$.
Since per assumption all good vertices above $v$ are comparable, no other vertex above $v$ outside the ray $R$ is good.
But this ray has only countable many neighbours in $T_2$. As no such neighbour above $v$ is  good, every neighbour of $R$ above $v$ lies on only countably many rays from $\cR$. But then also $v$ lies on only countably many rays from $\cR$, which is a contradiction since $v$ is good.

From this claim follows that there is a subdivided binary tree inside $G$ such that each branch vertex is good.

It follows that $\partial U$ itself contains the end space of a subdivided binary tree. But the end space of a binary tree is not scattered, a contradiction.
It follows from Theorem \ref{thm_mainrestate} that $\Omega(G)$ cannot be distinguished.
\end{proof}

We conclude this section with a new example of a graph $G$ witnessing that the thin ends of $G$ cannot always be displayed, that is based on topological considerations only (a different example is given by Carmesin in \cite[Example~3.3]{carmesin2014all}).  More precisely, since displayable subsets of ends are always completely metrizable by Corollary~\ref{end-space-of-tree}, it suffices to construct a graph where the thin ends are not completely metrizable.

As a warm-up, consider the binary tree $T$, and call a normal ray of $T$ \emph{rational} if its corresponding $0-1$-sequence becomes eventually constant, and \emph{irrational} otherwise. Let $\Sigma \subseteq \Omega(T_2)$ be the subspace of rational ends. By Sierpinski's characterisation \cite{sierpinski1920propriete}, every countable metric space without isolated points -- so in particular $\Sigma$ -- is homeomorphic to the rational numbers $\mathbb{Q}$. Thus, $\Sigma$ is not completely metrizable, and hence not displayable.

We now modify $T$ such that all irrational ends  become thick, and all rational ends remain thin. A binary tree with \emph{fat} tops $Z$ is a graph with vertex set $T \sqcup Z$, all edges of $T_2$, and such that the neighbourhood of $z \in Z$ consists of infinitely many nodes on some normal ray $R_z$ of $T_2$. Thus, the difference between a tree with tops and tree with fat tops is that a normal ray may now have more than one top vertex.

\begin{exmp}\label{thin_ends}
There is a binary tree with uncountably many fat tops such that its thin ends cannot be displayed.
\end{exmp}

\begin{proof}[Construction]
Starting from the binary tree $T$, let  $\{R_i \colon i \in \NN\}$ be an enumeration of the rational rays in $T$. We now add infinitely many top-vertices above each irrational ray, and connect them to their rays such that
\begin{enumerate}
  \item each top-vertex $z$ dominates its corresponding irrational ray $R_z$, and
  \item for each rational ray $R_i$, at most $i$ vertices on $R_i$ have top-vertices as neighbours. 
\end{enumerate}
Once the construction has finished, it is clear that the resulting graph $G$ is as desired. The end space $\Omega(G) = \Omega(T)$ remains unchanged. 
From (2) it is easy to see that every rational end $\omega_i \ni R_i$ has end-degree $1$ in $G$ (and hence is thin), since the corresponding ray $R_i$ has a tail of vertices of degree $3$ whose edges are cut edges.
All irrational ends are dominated by their infinitely many top-vertices and thus become thick.

By Sierpinski's characterisation \cite{sierpinski1920propriete}, the set of rational / thin ends is homeomorphic to the rational numbers $\mathbb{Q}$, so not completely metrizable, and hence not displayable by Corollary~\ref{end-space-of-tree}.

It remains to describe how to connect a top-vertex $z$ to its irrational ray $R_z$.
For each top-vertex $z$ and every $j \in \NN$, let $r_z^j$ be the $\leq_T$-minimal vertex in $R_z \setminus (R_0 \cup \ldots \cup R_j)$.
Now let the neighbours of $z$ be exactly the vertices in $\{r_z^0,r_z^1,r_z^2,\ldots\}$.
Since $r_z^0\leq r_z^1 \leq r_z^2 \leq \cdots$ is cofinal in $R_z$, the top-vertex $z$ dominates $R_z$, establishing property (1).

Next, consider the $i$th rational ray $R_i$. Again, for $j \leq i$ let $r_i^j$ be the $\leq_T$-minimal vertex in $R_i \setminus (R_0 \cup \ldots \cup R_j)$ (if it exists). Then it is clear that $r_i^0,r_i^1,\ldots,r_i^{i-1}$ are the only vertices on $R_i$ adjacent to top-vertices, giving (2).
\end{proof}

\bibliographystyle{plain}
\bibliography{ref}
\end{document}